\newtheorem{thm}{Theorem}[section]
\newtheorem{prop}[thm]{Proposition}
\newtheorem{cor}[thm]{Corollary}
\newtheorem{definition}[thm]{Definition}
\newtheorem{example}[thm]{Example}
\newtheorem{remark}[thm]{Remark}
\newtheorem{lemma}[thm]{Lemma}
\newtheorem{theorem}[thm]{Theorem}
\numberwithin{equation}{section}
\newcommand{\T}{\mathbb{T}}
\newcommand{\C}{\mathbb{C}}
\newcommand{\Z}{\mathbb{Z}}
\newcommand{\clb}{\mathcal{B}}
\newcommand{\cld}{\mathcal{D}}
\newcommand{\cle}{\mathcal{E}}
\newcommand{\clf}{\mathcal{F}}
\newcommand{\clh}{\mathcal{H}}
\newcommand{\clk}{\mathcal{K}}
\newcommand{\clm}{\mathcal{M}}
\newcommand{\clq}{\mathcal{Q}}
\newcommand{\cls}{\mathcal{S}}
\newcommand{\raro}{\rightarrow}
\newcommand{\norm}[1]{\left\Vert#1\right\Vert}
\begin{document}
		
\title[Representations of odometer semigroup]{Contractive representations of odometer semigroup}

\author[Ghatak]{Anindya Ghatak}
\address{Anindya Ghatak, Department of Mathematics - Noida - Bennett University, TechZone 2, Greater Noida, Uttar Pradesh 201310, India}
\email{anindya.ghatak@bennett.edu.in}

\author[Rakshit]{Narayan Rakshit}
\address{Narayan Rakshit, Department of Mathematics, Indian Institute of Technology, Roorkee, Uttarakhand 247667, India}
\email{nrakshit@ma.iitr.ac.in}

\author[Sarkar]{Jaydeb Sarkar}
\address{Jaydeb Sarkar, Indian Statistical Institute, Statistics and Mathematics Unit, 8th Mile, Mysore Road, Bangalore, 560059,
India}
\email{jay@isibang.ac.in, jaydeb@gmail.com}

\author[Suryawanshi]{Mansi Suryawanshi}
\address{Mansi Suryawanshi, Statistics and Mathematics Unit, Indian Statistical Institute, 8th Mile, Mysore Road, Bangalore, Karnataka - 560059, India}
\email{mansisuryawanshi1@gmail.com}

\begin{abstract}
Given a natural number $n \geq 1$, the odometer semigroup $O_n$, also known as the adding machine or the Baumslag-Solitar monoid with two generators, is a well-known object in group theory. This paper examines the odometer semigroup in relation to representations of bounded linear operators. We focus on noncommutative operators and prove that contractive representations of $O_n$ always admit to nicer representations of $O_n$. We give a complete description of representations of $O_n$ on the Fock space and relate it to the odometer lifting and subrepresentations of $O_n$. Along the way, we also classify Nica covariant representations of $O_n$.
\end{abstract}

%\today

\subjclass[2020]{47A13, 46L05, 47A20, 30H10, 20E08, 32A35}

\keywords{Odometer semigroup, row contractions, invariant subspaces, isometric representations, dilations, Nica covariant representations, subrepresentations}

\maketitle

\tableofcontents

\section{Introduction}

Let $n \geq 1$ be a natural number. The \textit{odometer semigroup} $O_n$ is a semigroup formed by $n$ generators $\{v_1, \ldots, v_n\}$ along with a generator $w$ that satisfies the conditions
\[
w v_i =
\begin{cases}
v_{i+1} & \mbox{if } i = 1, \ldots, n-1
\\
v_1 w & \mbox{if } i = n.
\end{cases}
\]
This semigroup is frequently referred to as the adding machine or the Baumslag-Solitar monoid $BS(1,n)^+$ with two generators \( a \) and \( b \) satisfying the relation \( b^n a = ab \).
This can also be seen as a Zappa-Sz\'{e}p product of the free semigroup with $n$ generators \cite{GT}. Researchers working in group theory have widely recognized the importance of this object; for instance, see \cite{Brin, Delgado} and the references therein for more advancement. This is also useful for understanding algebraic-analytic structures, including Toeplitz algebras \cite{Clark}, von Neumann algebras \cite{Fima}, $C^*$-algebras \cite{Spiel}, semigroup $C^*$-algebras \cite{Brownlowe, Li Yang, Li, Nica}, and numerous others. Our main goal in this paper is to enhance it with representations of bounded linear operators on Hilbert spaces.

Let $\clh$ be a Hilbert space. In this paper, all Hilbert spaces are separable and over $\mathbb{C}$. Let $\clh^n$ denote the Hilbert space of the $n$-direct sum of $\clh$ with itself. By an \textit{$n$-row operator}, or simply a \textit{row operator} (as $n$ will be clear from the context), we mean a bounded linear operator
\[
T: = (T_1, \ldots, T_n): \clh^n \raro \clh,
\]
where $T_i \in \clb(\clh)$ for all $i=1, \ldots, n$. Here, given Hilbert spaces $\clh$ and $\clk$, we set the space of bounded linear operators from $\clh$ into $\clk$ by $\clb(\clh, \clk)$. If $\clh = \clk$, we simply write this space as $\clb(\clh)$. Given a row operator $T$ acting on $\clh$ as above and $W \in \clb(\clh)$, we say that $(W, T)$ is a \textit{representation of $O_n$} if the following conditions are satisfied:
\begin{equation} \label{eqn def OS}
WT_i = \begin{cases}
T_{i+1} & \mbox{if } i=1, \ldots, n-1
\\
T_1 W & \mbox{if } i=n.
\end{cases}
\end{equation}
Without a doubt, the general representations of $O_n$ are a too-wide problem that might not yield much unless certain meaningful restrictions are put on the row operators. This is where row contractions enter into our theory. A \textit{row contraction} on $\clh$ is defined as a row operator $T = (T_1, \ldots, T_n)$ that meets the following condition
\[
\sum_{i=1}^n T_i T_i^* \leq I.
\]
Equivalently, the row operator $T \in \clb(\clh^n, \clh)$ is a contraction. The row contraction $T$ is said to be \textit{pure} if
\begin{equation}\label{eqn pure}
\lim_{m \raro \infty} \sum_{|\mu| = m, \mu \in F_n^+} \|T^*_\mu f\|^2 = 0,
\end{equation}
for all $f \in \clh$. Here $F_n^+$ denotes the free semigroup generated by $n$ letters $\{g_1, \ldots, g_n\}$ with identity $g_0$, and for each word $\mu = g_{\mu_1} \cdots g_{\mu_k} \in F_n^+$, $k \geq 1$, we write $T_\mu = T_{\mu_1} \cdots T_{\mu_k}$, and $T_{g_0} = I_\clh$. The above limit condition is well-accepted and has broad applicability that aligns seamlessly with our context, which we shall elaborate on shortly. In general, we remark that the theory of noncommutative pure row contractions is renowned for its abundance, creativity, and ability to integrate a number of established theories  \cite{Popescu 95, Popescue char, Popescue}, such as the Sz.-Nagy and Foias model theory, interpolation problem, Toeplitz operators, and $C^*$-algebras. The effect of the noncommutative dilation theory will also be heavily reflected in this paper. At this point, we introduce the central notion of this investigation.

\begin{definition}\label{def cont rep}
A contractive representation of $O_n$ is a representation $(W, T)$ of $O_n$ such that $T$ is a pure contraction.
\end{definition}

It is worth noting that the notion of a contractive representation, as introduced above, can also be defined for row contractions without assuming purity. However, in this paper, we primarily focus on pure row contractions. For this reason, we impose the purity condition on the row operator $T$.

The goal of this paper is to present a complete picture of representations of $O_n$. Along the way, we offer a precise depiction of the operator $W$, the other generator of representations of $O_n$. The latter direction enhances the existing noncommutative dilation theory, which also relates to joint invariant subspaces (or subrepresentations) of certain natural contractive representations of $O_n$. Before we get into more details about the main results of this paper, we need to set up a notion that also provides natural examples of contractive representations of $O_n$.

A row contraction $T$ is called \textit{row isometry} if $T_{i}$ is isometry for all $i=1, \ldots, n$. Note that a row contraction $T$ is a row isometry if and only if $T^*_{i}T_{j}= \delta_{ij} I$ for all $i, j = 1, \ldots, n$. Creation operators on the (full) Fock space provides concrete examples of row isometries: Denote by $\mathcal{F}^{2}_n$ the \textit{Fock space} over $\mathbb{C}^{n}$, that is
\[
\mathcal{F}^{2}_n=  \mathbb{C}\Omega\oplus \bigoplus\limits_{k=1}^{\infty} ({\mathbb{C}^n})^{\otimes k},
\]
where $\Omega$ is a unit vector called \emph{vacuum state}. Let  $\{e_{1},\ldots, e_{n}\}$ be the standard orthonormal basis of $\mathbb{C}^{n}$. For each $i = 1, \ldots, n$, the $i$-th \textit{creation} or \textit{left shift operator} $S_i$ on $\clf^2_n$ is defined by
\[
S_i f = e_i \otimes f \qquad (f \in \clf^2_n).
\]
A simple computation shows that $(S_1, \ldots, S_n)$ is a pure row isometry. More generally, if $\cle$ is a Hilbert space, then
\[
S^\cle :=(S_1 \otimes I_\cle, \ldots, S_n \otimes I_\cle),
\]
on $\clf^2_n \otimes \cle$ is a pure row isometry \cite[Remark 1.1]{Popescue}. We furthermore emphasize that the creation operators on vector-valued Fock spaces are highly significant in noncommutative operator theory and are utilized in free analytic models \cite{Popescue char}. For instance, the noncommutative Wold type decomposition theorem  (see \cite[Theorem 2]{Frazho 84} and \cite[Theorem 1.3]{Popescue}) says that up to unitary equivalence, a pure row isometry is of the form $S^\cle$ for some Hilbert space $\cle$.

Our theory will center on representations of $O_n$ that correspond to row isometries on vector-valued Fock spaces. We introduce them formally as follows:

\begin{definition}\label{def Fock rep}
Let $\cle$ be a Hilbert space and let $(W, S^\cle)$ be a representation of $O_n$ for some $W \in \clb(\clf^2_n \otimes \cle)$. We refer to $(W, S^\cle)$ as a Fock representation of $O_n$, or simply a Fock representation. If, in addition, $W$ is an isometry (unitary/contractive) operator, then we call it an isometric (unitary/contractive) Fock representation.
\end{definition}

We need to fix a notation. For each word $\mu \in F_n^+$, we write
\[
e_{\mu} =
\begin{cases}
e_{\mu_{1}}\otimes \cdots \otimes e_{\mu_k} & \mbox{if } \mu = g_{\mu_{1}}g_{\mu_{2}}\hdots g_{\mu_k}
\\
\Omega & \mbox{if }\mu = g_{0}.
\end{cases}
\]
It follows that the set $\{e_\mu: \mu \in F_n^+\}$ is an orthonormal basis for $\clf_n^2$. As already pointed out, Fock representations will play an important role in our analysis. Our first result yields a complete description as well as a classification of Fock representations (see Theorem \ref{Prop: Odometer operator}).

\begin{theorem}\label{thm intro odometer maps}
Let $\cle$ be a Hilbert space, and let $W \in \clb(\clf^2_n \otimes \cle)$. Then $(W, S^{\cle})$ is a Fock representation if and only if there exists $L \in \clb(\cle,  \clf^2_n \otimes \cle)$ such that
\[
W = W_L,
\]
where $W_L \in \clb(\clf^2_n \otimes \cle)$ is defined by
\[
W_{L}(\Omega \otimes \eta) = L\eta,
\]
for all $\eta \in \cle$, and for all $e_\mu = e_{{\mu_{1}}} \otimes\cdots \otimes e_{\mu_{m}} \in F^+_n$, $\mu \neq g_0$, define
\[
W_{L}(e_{\mu} \otimes \eta) =
\begin{cases}
e_{\mu_1+1} \otimes e_{\mu_2}\otimes\cdots \otimes e_{\mu_{m}} \otimes \eta & \mbox{if } \mu_{1} \neq n
\\
e_{{1}} \otimes e_{{\mu_{2}+1}} \otimes e_{\mu_3} \otimes \cdots \otimes e_{\mu_{m}} \otimes \eta & \mbox{if } \mu_{1}=n, \mu_{2} \neq n
\\
e^{\otimes 2}_{1}   \otimes e_{{\mu_{3}+1}} \otimes e_{\mu_4} \otimes \cdots \otimes e_{\mu_{m}} \otimes \eta & \mbox{if } \mu_{1}=\mu_{2}=n, \mu_{3} \neq n
\\
\vdots & \vdots
\\
e^{\otimes m}_{1} \otimes L \eta & \mbox{if } \mu_{1}=\cdots= \mu_{m}=n.
\end{cases}
\]
Moreover, if $W = W_L$ for some $L \in \clb(\cle,  \clf^2_n \otimes \cle)$, then $L$ is unique.
\end{theorem}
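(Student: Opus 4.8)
The plan is to establish both directions of the equivalence by exploiting the defining relations \eqref{eqn def OS} of a representation of $O_n$ together with the explicit action of the creation operators $S_i \otimes I_\cle$ on the orthonormal basis $\{e_\mu \otimes \eta\}$ of $\clf^2_n \otimes \cle$. The key observation is that $O_n$ is generated by $v_1, \dots, v_n, w$, so a representation $(W, S^\cle)$ is completely pinned down by how $W$ interacts with $S_1 \otimes I_\cle$; all the other relations are consequences. Concretely, the relations say $W(S_i \otimes I_\cle) = S_{i+1} \otimes I_\cle$ for $i = 1, \dots, n-1$ and $W(S_n \otimes I_\cle) = (S_1 \otimes I_\cle) W$.

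For the "only if" direction, suppose $(W, S^\cle)$ is a Fock representation. I would first define $L \in \clb(\cle, \clf^2_n \otimes \cle)$ by $L\eta := W(\Omega \otimes \eta)$; this is clearly bounded with $\|L\| \le \|W\|$. Then I compute $W(e_\mu \otimes \eta)$ by induction on the length $m = |\mu|$, and within fixed length, by peeling off the relations according to how many leading letters of $\mu$ equal $n$. If $\mu_1 \ne n$, write $e_\mu \otimes \eta = (S_{\mu_1} \otimes I_\cle)(e_{\mu_2} \otimes \cdots \otimes e_{\mu_m} \otimes \eta)$, and since $W(S_{\mu_1} \otimes I_\cle) = S_{\mu_1 + 1} \otimes I_\cle$ (using $\mu_1 \le n-1$), we get $W(e_\mu \otimes \eta) = e_{\mu_1 + 1} \otimes e_{\mu_2} \otimes \cdots \otimes e_{\mu_m} \otimes \eta$ — the first case. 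If $\mu_1 = n$, write $e_\mu \otimes \eta = (S_n \otimes I_\cle)(e_{\mu_2} \otimes \cdots \otimes e_{\mu_m} \otimes \eta)$, so $W(e_\mu \otimes \eta) = (S_1 \otimes I_\cle)W(e_{\mu_2} \otimes \cdots \otimes e_{\mu_m} \otimes \eta)$; now recurse on the shorter word $g_{\mu_2} \cdots g_{\mu_m}$. Iterating this, each leading $n$ contributes a factor $e_1 \otimes (\,\cdot\,)$ on the left and strips one letter, until either one hits a letter $\mu_j \ne n$ (giving $e_1^{\otimes (j-1)} \otimes e_{\mu_j + 1} \otimes e_{\mu_{j+1}} \otimes \cdots$) or one exhausts the word (giving $e_1^{\otimes m} \otimes W(\Omega \otimes \eta) = e_1^{\otimes m} \otimes L\eta$). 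This is exactly the displayed formula for $W_L$, so $W = W_L$.

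For the "if" direction, given $L \in \clb(\cle, \clf^2_n \otimes \cle)$, I define $W_L$ by the stated formula on the orthonormal basis and check it extends to a bounded operator — boundedness follows because $W_L$ maps the dense span of basis vectors into $\clf^2_n \otimes \cle$ in a way that is "almost" a bijective reindexing of basis vectors (the map $e_\mu \mapsto$ shifted word is injective and norm-preserving off the subspace $\mathbb{C}\Omega \otimes \cle$, on which $W_L$ acts as $L$), so $\|W_L\| \le \max\{1, \|L\|\} = $ a finite constant; a clean way is to write $\clf^2_n \otimes \cle = (\mathbb{C}\Omega \otimes \cle) \oplus \bigoplus_{m \ge 1} (\mathbb{C}^n)^{\otimes m} \otimes \cle$ and observe $W_L$ restricted to $\mathbb{C}\Omega \otimes \cle$ is $L$, while $W_L$ restricted to $\bigoplus_{m \ge 1}(\mathbb{C}^n)^{\otimes m} \otimes \cle$ is a partial-isometric reindexing composed with (on the "all $n$'s" part) $L$, hence bounded. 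Then I verify the relations \eqref{eqn def OS} by applying both sides to an arbitrary basis vector $e_\mu \otimes \eta$: for $i \le n-1$, $(S_i \otimes I_\cle)(e_\mu \otimes \eta) = e_i \otimes e_\mu \otimes \eta$ has leading letter $i \ne n$, so $W_L$ sends it to $e_{i+1} \otimes e_\mu \otimes \eta = (S_{i+1} \otimes I_\cle)(e_\mu \otimes \eta)$; for $i = n$, $(S_n \otimes I_\cle)(e_\mu \otimes \eta) = e_n \otimes e_\mu \otimes \eta$, and running the $W_L$ formula one step peels the leading $n$ and prepends $e_1$, giving $e_1 \otimes W_L(e_\mu \otimes \eta) = (S_1 \otimes I_\cle)W_L(e_\mu \otimes \eta)$ — here one must separately check the edge case $\mu = g_0$, where $(S_n \otimes I_\cle)(\Omega \otimes \eta) = e_n \otimes \eta$ maps under $W_L$ (the "$m=1$, all $n$'s" case) to $e_1 \otimes L\eta = (S_1 \otimes I_\cle)L\eta = (S_1 \otimes I_\cle)W_L(\Omega \otimes \eta)$, as needed.

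Finally, uniqueness of $L$: if $W = W_L = W_{L'}$, then evaluating at $\Omega \otimes \eta$ gives $L\eta = W(\Omega \otimes \eta) = L'\eta$ for all $\eta \in \cle$, so $L = L'$. The main obstacle I anticipate is purely bookkeeping — carefully organizing the iterated application of the two relations so that the case split in the displayed formula (indexed by the length of the initial run of $n$'s in $\mu$) is matched exactly, and making the boundedness argument for $W_L$ airtight by decomposing the Fock space correctly; there is no deep difficulty, only the need to be meticulous with indices and with the boundary cases $\mu = g_0$ and $\mu$ consisting entirely of $n$'s.
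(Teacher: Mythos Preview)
Your proposal is correct and follows essentially the same approach as the paper: define $L\eta = W(\Omega\otimes\eta)$, then use the relations $W(S_k\otimes I_\cle)=S_{k+1}\otimes I_\cle$ and $W(S_n\otimes I_\cle)=(S_1\otimes I_\cle)W$ to compute $W$ on basis vectors by peeling off leading letters, and conversely verify the relations directly on basis vectors. The only organizational difference is that the paper strips the entire leading block of $n$'s at once via $W(S_n^m\otimes I_\cle)=(S_1^m\otimes I_\cle)W$, whereas you recurse one letter at a time; these are equivalent.

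One small correction: your claimed bound $\|W_L\|\le\max\{1,\|L\|\}$ is false. The paper gives an explicit isometric $L$ (so $\|L\|=1$) with $\|W_L\|\ge\sqrt{3/2}$; the issue is that the images of the ``all-$n$'s'' basis vectors $e_n^{\otimes m}\otimes\eta\mapsto e_1^{\otimes m}\otimes L\eta$ need not be orthogonal to each other or to the images of the remaining basis vectors, so $W_L$ is not block-diagonal in the way your sketch suggests. The correct estimate (which the paper establishes in a remark) is $\|W_L\|\le 1+\|L\|$, obtained by writing $W_L$ as a sum of two bounded pieces rather than a direct sum. This does not affect the structure of your argument, only the constant.
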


Such a map $W_L$ is referred to as an \textit{odometer map} with the symbol $L \in \clb(\cle,  \clf^2_n \otimes \cle)$. It is a very curious fact that the odometer maps are defined for all $L \in \clb(\cle,  \clf^2_n \otimes \cle)$, and subsequently, the theme of symbols becomes comparable to the noncommutative Toeplitz operators \cite{Popescu 95} (see Remark \ref{remark odometer maps} for more on this).

A special form of the above operator $W_L$ was introduced by Li \cite{Boyu Li22} in a restricted context, namely when $W_L$ is unitary. He proved, in the language at hand, that $(W, S^\cle)$ is a unitary Fock representation if and only if there exists $L \in \clb(\cle)$ such that $W = W_L$ \cite[Corollary 3.6]{Boyu Li22}. In our general case, we needed to adopt a more finer notion of the model operator $W$ acting on vector-valued Fock spaces. Ultimately, we effectively improved Li's notion, which amounts to extending the idea of symbols from $\clb(\cle)$ to $\clb(\cle, \clf^2_n \otimes \cle)$. We have applied this to the definition of $W_L$ in the above theorem. This paper will also retrieve Li's result using a completely different methodology (see Theorem \ref{W_L uni}). In Remark \ref{remark odometer maps}, we will discuss more about the odometer maps and corresponding symbols.

In Theorem \ref{ W_L iso iff L}, we present a complete classification of isometric Fock representations. To explain the result, for each $L\in \mathcal{B} (\cle, \clf^2_n \otimes \cle)$, we define the closed subspace (see \eqref{eqn E_L})
\[
\cle_L = \overline{\rm span}\{ {e^{\otimes{m}}_{1} \otimes {\eta}} : m \in \mathbb{Z}_{+}, \eta \in \cle\} \ominus \overline{\rm span}\{ {e^{\otimes{p}}_{1} \otimes {L \zeta}} : p \geq 1, \zeta \in \cle\}.
\]
We shall also designate $W_L$ as the odometer map that corresponds to the symbol $L$, as defined in Theorem \ref{thm intro odometer maps}. We have the following classification of isometric Fock representations:

\begin{thm}
Let $\cle$ be a Hilbert space, and let $L \in \clb(\cle, \clf^2_n \otimes \cle)$. Then $(W_L, S^{\cle})$ is an isometric Fock representation if and only if the following conditions hold:
\begin{enumerate}
\item $L$ is an isometry.
\item $L \cle \subseteq \cle_L$.
\end{enumerate}
\end{thm}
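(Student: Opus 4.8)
The plan is to reduce the statement to the single requirement that the bounded operator $W_L$ be an isometry, since by Theorem~\ref{thm intro odometer maps} the pair $(W_L, S^{\cle})$ is automatically a Fock representation. Fix an orthonormal basis $\{\eta_i\}$ of $\cle$, so that $\{e_\mu \otimes \eta_i : \mu \in F_n^+,\ i\}$ is an orthonormal basis of $\clf^2_n \otimes \cle$. Call a word $\mu$ \emph{special} if it consists only of the letter $g_n$, i.e.\ $\mu = g_n^k$ for some $k \geq 0$ (with $g_n^0 = g_0$), and set $V := S_1 \otimes I_\cle$, which is an isometry. With this terminology the formula in Theorem~\ref{thm intro odometer maps} becomes
\[
W_L(e_{g_n^k} \otimes \eta) = e_1^{\otimes k} \otimes L\eta = V^k L\eta \qquad (k \geq 0),
\]
while for every non-special word $\mu$ (of length $m$) with first non-$g_n$ letter in position $j$,
\[
W_L(e_\mu \otimes \eta) = e_{\phi(\mu)} \otimes \eta, \qquad \phi(\mu) := g_1^{j-1} g_{\mu_j+1} g_{\mu_{j+1}} \cdots g_{\mu_m},
\]
empty products of $g_1$'s being dropped. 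Put $\clf_1 := \overline{\rm span}\{e_1^{\otimes k} \otimes \eta : k \geq 0,\ \eta \in \cle\}$ and $\clm := \overline{\rm span}\{e_1^{\otimes p} \otimes L\zeta : p \geq 1,\ \zeta \in \cle\}$, the two spans occurring in the definition of $\cle_L$.

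The first step is a combinatorial lemma on $\phi$: on the set of non-special words, $\phi$ is injective, and its range is exactly the set of words having at least one letter different from $g_1$. This holds because $\mu_j + 1 \in \{2, \dots, n\}$ for a non-special $\mu$, so the leading run of $g_1$'s in $\phi(\mu)$ has length precisely $j-1$; reading off $j$, then $\mu_j$, then the remaining letters recovers $\mu$, and the reverse substitution shows that every not-all-$g_1$ word is attained. Two consequences follow at once: (a) $W_L$ carries $\clh_1 := \overline{\rm span}\{e_\mu \otimes \eta_i : \mu \text{ non-special}\}$ isometrically onto $\clf_1^\perp$, regardless of what $L$ is; and (b) whenever $L\cle \subseteq \clf_1$ one has $\clm \subseteq \clf_1$, so $\cle_L = \clf_1 \ominus \clm = \clf_1 \cap \clm^\perp$, and consequently the two conditions of the statement are together equivalent to: $L$ is an isometry, $L\cle \subseteq \clf_1$, and $L\cle \perp \clm$.

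For the forward direction, assume $W_L$ is an isometry and test the identity $\langle W_L(e_\mu \otimes \eta), W_L(e_\nu \otimes \zeta)\rangle = \langle e_\mu \otimes \eta, e_\nu \otimes \zeta\rangle$ on pairs with $\mu = g_0$. Against $e_{g_0} \otimes \zeta$ itself it gives $\langle L\eta, L\zeta\rangle = \langle \eta, \zeta\rangle$, so $L$ is an isometry; against $e_\nu \otimes \zeta$ with $\nu$ non-special, together with the fact that $\phi$ surjects onto the not-all-$g_1$ words, it gives $L\eta \perp e_\rho \otimes \zeta$ for every such $\rho$, i.e.\ $L\cle \subseteq \clf_1$; and against $e_{g_n^p} \otimes \zeta$ with $p \geq 1$ it gives $\langle L\eta, V^p L\zeta\rangle = 0$, i.e.\ $L\cle \perp \clm$. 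By consequence (b) this is precisely ``$L$ isometric and $L\cle \subseteq \cle_L$''. For the converse, assume $L$ is an isometry with $L\cle \subseteq \cle_L$. On $\clh_1$ the map $W_L$ is already an isometry by (a). On $\clh_0 := \clh_1^\perp = \overline{\rm span}\{e_{g_n^k} \otimes \eta : k \geq 0,\ \eta \in \cle\}$, for a finitely supported $x = \sum_k e_{g_n^k} \otimes \eta_k$ we have $\|W_L x\|^2 = \sum_{k,l} \langle V^k L\eta_k, V^l L\eta_l\rangle$; for $l > k$ the term equals $\langle L\eta_k, V^{l-k} L\eta_l\rangle = 0$ since $V^{l-k} L\eta_l \in \clm$ and $L\eta_k \in \cle_L \perp \clm$, while each diagonal term equals $\|L\eta_k\|^2 = \|\eta_k\|^2$ because $L$ and $V$ are isometries; hence $\|W_L x\| = \|x\|$ and $W_L$ is an isometry on $\clh_0$. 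Finally $L\cle \subseteq \clf_1$ and $V\clf_1 \subseteq \clf_1$ give $W_L(\clh_0) = \overline{\rm span}\{V^k L\eta : k \geq 0,\ \eta \in \cle\} \subseteq \clf_1$, whereas $W_L(\clh_1) = \clf_1^\perp$; thus the images of $\clh_0$ and $\clh_1$ under $W_L$ are orthogonal, and since $\clf^2_n \otimes \cle = \clh_0 \oplus \clh_1$ this forces $W_L$ to be an isometry on the whole space.

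The main obstacle is the combinatorial lemma on $\phi$: one must carefully track the ``carry'' of the odometer — the way a leading block of $g_n$'s is converted into a leading block of $g_1$'s — and verify that $\phi$ is a bijection from the non-special words onto the not-all-$g_1$ words. Once that is in place, the only genuine computation is the short norm identity on $\clh_0$ together with the orthogonality of the two images, and the subspace $\cle_L$ plays no role beyond packaging the two conditions $L\cle \subseteq \clf_1$ and $L\cle \perp \clm$ into one.
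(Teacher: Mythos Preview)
Your proof is correct and shares the paper's underlying strategy: both partition the basis of $\clf^2_n\otimes\cle$ into the ``all-$g_n$'' (special) words and the rest, observe that $W_L$ acts on the non-special part independently of $L$, and then show that conditions (1) and (2) govern exactly the behaviour on the special part. The organizational difference is that you isolate the map $\phi$ and prove once and for all that it is a bijection from non-special words onto not-all-$g_1$ words, which immediately gives that $W_L$ restricts to a unitary $\clh_1\to\clf_1^\perp$; the paper instead verifies orthogonality of $W_L$-images pair-by-pair via a five-case analysis (their Cases I--III and A--B). Your packaging is cleaner and makes the role of $\cle_L$ more transparent (as you note, it just bundles $L\cle\subseteq\clf_1$ with $L\cle\perp\clm$), while the paper's case-by-case verification is more elementary but longer. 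Substantively the arguments are the same.
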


In the scalar case (that is, $\cle = \mathbb{C}$), odometer maps are precisely given by $W_\xi$, where $\xi$ is in $\clf^2_n$. In this case, the above result becomes even more concrete (see Corollary \ref{cor scalar iso Fock rep}): Let $\xi \in \clf^2_n$. Then $W_{\xi}$ on $\clf^2_n$ is an isometry if and only if
\[
\xi = \sum^{\infty}_{p=0} c_{p}e^{\otimes p}_{1},
\]
where the sequence of scalars $\{c_p\}_{p=0}^\infty$ satisfies the conditions $\sum^{\infty}_{p=0} \vert c_{p}\vert^2=1$, and $\sum^{\infty}_{p=0}  c_{p+r} \overline{c_{p}}=0$ for all $r\geq 1$.

The existence of vectors in $\clf^2_n$ that satisfy the above conditions may appear counterintuitive. In this regard, we will present numerous constructions of such $\xi$ in $\clf^2_n$ in the final section. There, we will also present several examples of general isometric Fock representations.

Clearly, the evaluation of the conjugates of odometer maps is a basic question. How nice the general formula could be is not apparent. However, the conjugate operator formula is precise and significant for isometric odometer maps (see Proposition \ref{Lemma: Adjoint of W}). This is a technique we applied to understand Nica-covariant representations. In fact, this is a specific category of Fock representations of $O_n$ that is of interest (see \cite[page 742]{Boyu Li22} and also see \cite{Brownlowe, Li, Nica}):

\begin{definition}\label{def Nica}
Let $\cle$ be a Hilbert space, and let $(W, S^\cle)$ be an isometric Fock representation of $O_n$. We say that $(W, S^\cle)$ is a Nica-covariant representation of $O_n$ if
\[
W^*(S_1 \otimes I_\cle) = (S_n \otimes I_\cle) W^*.
\]
\end{definition}

The classification of Nica covariant representations of $O_n$ is rather clean (see Theorem \ref{Thm: Nica covariant characterization}):
\begin{theorem}
Let $(W_L, S^\cle)$ be an isometric Fock representation. Then $(W_L, S^\cle)$ is a Nica-covariant representation of $O_n$ if and only if
\[
L \cle \subseteq \Omega \otimes \cle.
\]
\end{theorem}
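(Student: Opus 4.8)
The approach is to replace the Nica-covariance relation, which concerns the \emph{isometries} $S_1\otimes I_\cle$ and $S_n\otimes I_\cle$, by an equivalent relation about their \emph{adjoints}, and then exploit the two intertwining identities available for the odometer map. First, since $(W_L,S^\cle)$ is a representation of $O_n$, the defining relation $WT_n=T_1W$ with $T=S^\cle$ gives
\[
W_L(S_n\otimes I_\cle)=(S_1\otimes I_\cle)W_L .
\]
Second, taking adjoints in the Nica-covariance relation $W_L^*(S_1\otimes I_\cle)=(S_n\otimes I_\cle)W_L^*$ shows it is equivalent to
\[
(S_1^*\otimes I_\cle)\,W_L=W_L\,(S_n^*\otimes I_\cle). \qquad (\star)
\]
So the plan has two steps: (a) prove that, granted the first identity, $(\star)$ holds if and only if $L\cle\subseteq\Ker(S_1^*\otimes I_\cle)$; and then (b) use the standing hypothesis that $(W_L,S^\cle)$ is an \emph{isometric} Fock representation to upgrade this to $L\cle\subseteq\Omega\otimes\cle$.

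For step (a), the forward direction is immediate: evaluating $(\star)$ on a vector $\Omega\otimes\eta$ annihilates the right-hand side (as $S_n^*\Omega=0$) and turns the left-hand side into $(S_1^*\otimes I_\cle)L\eta$, so $(\star)$ forces $L\cle\subseteq\Ker(S_1^*\otimes I_\cle)$. For the converse I would split $\clf^2_n\otimes\cle=\ran(S_n\otimes I_\cle)\oplus\Ker(S_n^*\otimes I_\cle)$ and check $(\star)$ on each summand. On $\ran(S_n\otimes I_\cle)$, writing a vector as $(S_n\otimes I_\cle)y$, both sides of $(\star)$ collapse to $W_Ly$ after using $S_n^*S_n=I$, the representation identity, and $S_1^*S_1=I$. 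On $\Ker(S_n^*\otimes I_\cle)$ the right-hand side vanishes, so one must show $W_L$ maps $\Ker(S_n^*\otimes I_\cle)$ into $\Ker(S_1^*\otimes I_\cle)$; since $\Ker(S_n^*\otimes I_\cle)=(\Omega\otimes\cle)\oplus\overline{\rm span}\{e_\nu\otimes\eta:\,|\nu|\ge 1,\ \nu_1\neq n\}$, this reduces to two facts: $W_L(\Omega\otimes\cle)=L\cle\subseteq\Ker(S_1^*\otimes I_\cle)$ by hypothesis, and, by the explicit formula of Theorem~\ref{thm intro odometer maps}, $W_L(e_\nu\otimes\eta)$ with $\nu_1\neq n$ and $|\nu|\ge1$ is a vector of positive tensor grade beginning with $e_{\nu_1+1}$ where $\nu_1+1\in\{2,\dots,n\}$ --- hence never beginning with $e_1$ --- so it lies in $\Ker(S_1^*\otimes I_\cle)$; closedness of this kernel together with boundedness of $W_L$ then handles the whole closed span.

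For step (b): since $(W_L,S^\cle)$ is an isometric Fock representation, Theorem~\ref{ W_L iso iff L} gives $L\cle\subseteq\cle_L$, and from the definition of $\cle_L$ we have $\cle_L\subseteq\clm:=\overline{\rm span}\{e_1^{\otimes m}\otimes\eta:\,m\in\Z_+,\ \eta\in\cle\}$. On $\clm$ the operator $S_1^*\otimes I_\cle$ acts by $e_1^{\otimes m}\otimes\eta\mapsto e_1^{\otimes(m-1)}\otimes\eta$ for $m\ge1$ and $\Omega\otimes\eta\mapsto 0$, from which $\clm\cap\Ker(S_1^*\otimes I_\cle)=\Omega\otimes\cle$. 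Hence, under our standing hypothesis, $L\cle\subseteq\Ker(S_1^*\otimes I_\cle)$ is equivalent to $L\cle\subseteq\Omega\otimes\cle$ (the reverse implication needing only $S_1^*\Omega=0$). Stringing the equivalences together --- Nica-covariance $\Leftrightarrow$ $(\star)$ $\Leftrightarrow$ $L\cle\subseteq\Ker(S_1^*\otimes I_\cle)$ $\Leftrightarrow$ $L\cle\subseteq\Omega\otimes\cle$ --- yields the theorem.

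I expect the main obstacle to be the converse half of step (a): showing $(\star)$ actually holds once $L\cle\subseteq\Ker(S_1^*\otimes I_\cle)$ is the only place where the concrete combinatorics of the odometer map --- which letter is incremented and when a carry occurs --- has to be used, precisely to see that a positive-grade vector not starting with $e_n$ is sent by $W_L$ to a vector not starting with $e_1$. Everything else is formal manipulation of intertwining relations. As an alternative, one could bypass step (a) altogether and instead substitute the explicit adjoint formula for isometric odometer maps from Proposition~\ref{Lemma: Adjoint of W} into both $W_L^*(S_1\otimes I_\cle)$ and $(S_n\otimes I_\cle)W_L^*$, comparing them on the orthonormal basis $\{e_\mu\otimes\eta\}$ directly; the same case distinction (whether $\mu$ begins with $e_1$, and the treatment of carries) resurfaces there.
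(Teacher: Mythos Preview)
Your argument is correct. However, it takes a different route from the paper's \emph{main} proof of Theorem~\ref{Thm: Nica covariant characterization}. For the forward direction the paper invokes Li's structural Wold-type decomposition for Nica-covariant representations \cite[Theorem~4.4]{Boyu Li22}, splitting $\clf^2_n\otimes\cle$ into pieces governed by the unitary and shift parts of $W_L$, and reads off $L\cle\subseteq\Omega\otimes\cle$ from that decomposition. For the converse it applies the classical Wold decomposition to the isometry $L:\cle\to\Omega\otimes\cle$, writes $W_L=W_{L_1}\oplus W_{L_2}$ with $L_1$ unitary and $L_2$ a shift, and identifies each summand with a known Nica-covariant model (the unitary case being immediate, the shift case being a multiple of the left regular representation). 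By contrast, you work entirely with the adjoint form $(\star)$ of Nica covariance and the explicit odometer formula, with the intermediate kernel condition $L\cle\subseteq\Ker(S_1^*\otimes I_\cle)$ serving as a clean pivot; no external structure theorems are needed. What the paper's main proof buys is a visible link to the structural theory of \cite{Boyu Li22}; what your approach buys is self-containment and minimal prerequisites. It is worth noting that the paper itself records essentially your argument as an alternative proof in the Remark immediately following Theorem~\ref{Thm: Nica covariant characterization}: the forward direction there computes $\langle L\eta,e_1^{\otimes m}\otimes\gamma\rangle$ via $(\star)$ exactly as your step~(a)${}+{}$(b) does in combination, and the converse verifies $(\star)$ on basis vectors $e_n^{\otimes m}\otimes e_\mu\otimes\eta$, which is your range/kernel splitting in slightly different dress. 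So you have independently arrived at the paper's second, more elementary, proof.
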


We will refer to a symbol that satisfies the above criteria as a \textit{constant symbol}. Simply put, constant symbols precisely implement Nica-covariant representations of $O_n$.

As a result of the classification of general Nica covariant representations of $O_n$ along with some computations, we recover the structure of unitary Fock representations that were previously obtained in \cite[ Corollary 3.6]{Boyu Li22}. Pursuant to the terminology used in our present paper, it states (see Theorem \ref{W_L uni}):

\begin{theorem}
Let $\cle$ be a Hilbert space, and let $L \in \clb(\cle, \clf^2_n \otimes \cle)$ be an isometry. Then $W_L$ is unitary if and only if
\[
L \cle = \Omega \otimes \cle.
\]
\end{theorem}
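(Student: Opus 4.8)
The plan is to reduce the unitarity of $W_L$ to a surjectivity statement, and then to exploit the classification of Nica-covariant representations together with the explicit action of $W_L$ on the basis vectors. Since $L$ is assumed to be an isometry and $(W_L, S^\cle)$ is built from it, we already know from Theorem \ref{ W_L iso iff L} (applied once we check $L\cle \subseteq \cle_L$) that $W_L$ is an isometry; the only remaining question is whether $W_L$ is onto. So the goal reduces to: $W_L$ is surjective if and only if $L\cle = \Omega \otimes \cle$.

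First I would prove the easy direction. If $L\cle = \Omega\otimes\cle$, then $L$ is in particular a constant symbol in the sense of Definition \ref{def Nica} and the classification of Nica-covariant representations (Theorem \ref{Thm: Nica covariant characterization}), so $(W_L, S^\cle)$ is Nica-covariant and $W_L$ is an isometry. To see surjectivity, I would use the explicit formula for $W_L$ in Theorem \ref{thm intro odometer maps} together with the hypothesis to show that every basis vector $e_\mu \otimes \eta$ lies in the range. The key observation is that when $L\cle = \Omega\otimes\cle$, the symbol $L$ restricted to $\cle$ is a unitary onto $\Omega\otimes\cle \cong \cle$, so the ``carry'' rule $e_1^{\otimes m}\otimes \eta \mapsto e_1^{\otimes m}\otimes L\eta'$ at the bottom of the piecewise definition becomes an honest bijection of $\overline{\rm span}\{e_1^{\otimes m}\otimes\cle\}$. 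One then checks that $W_L$ acts on each ``column'' (the span of words beginning with a fixed number of $n$'s followed by a letter $\neq n$, together with its tail) as a shift-type bijection, and assembles these to conclude $\ran W_L = \clf_n^2\otimes\cle$. A cleaner route: show directly that $W_L^* W_L = I$ and $W_L W_L^* = I$ by computing $W_L^*$ on basis vectors using Proposition \ref{Lemma: Adjoint of W}, which gives a clean adjoint formula precisely in the isometric case.

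For the converse, suppose $W_L$ is unitary. I would argue that $\Omega\otimes\cle$ must be contained in $\ran W_L$, and trace back through the formula to force $L\cle = \Omega\otimes\cle$. Concretely: a vector $\Omega\otimes\eta$ can only be hit by $W_L$ from inside $\overline{\rm span}\{\Omega\otimes\cle\}$, because the formula for $W_L(e_\mu\otimes\eta)$ with $\mu\neq g_0$ always produces a vector whose leading tensor factor is $e_1$ or some $e_j$ with $j\geq 2$ — never $\Omega$ — unless we are in the last case $\mu_1=\cdots=\mu_m=n$, which outputs $e_1^{\otimes m}\otimes L\eta$, again with leading factor $e_1$, not $\Omega$. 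Hence $W_L$ maps $\overline{\rm span}\{\Omega\otimes\cle\}$ onto itself (using unitarity and the fact that the orthogonal complement is mapped into itself), and on that subspace $W_L$ acts as $\Omega\otimes\eta \mapsto L\eta$. Surjectivity of $W_L$ restricted there forces $L\cle$ to be all of $\Omega\otimes\cle$, and isometry of $L$ already gives the reverse inclusion, so $L\cle = \Omega\otimes\cle$.

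The main obstacle I anticipate is the bookkeeping in the surjectivity argument: the piecewise definition of $W_L$ mixes the ``increment'' operation on the first non-$n$ letter with the ``carry into the symbol $L$'' operation, so one must be careful that the decomposition of $\clf_n^2\otimes\cle$ into $W_L$-invariant pieces is genuinely orthogonal and exhausts the space, and that surjectivity on each piece really does follow from $L\cle = \Omega\otimes\cle$ rather than merely from $L$ being an isometry. I expect this is most painlessly handled by verifying $W_L W_L^* = I$ via the adjoint formula of Proposition \ref{Lemma: Adjoint of W}, thereby sidestepping the combinatorial case analysis, and isolating the single place where the hypothesis $L\cle = \Omega\otimes\cle$ (as opposed to merely $L\cle\subseteq\cle_L$) is used.
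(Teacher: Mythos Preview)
Your forward direction is essentially the paper's: verify $\cle_L=\Omega\otimes\cle$, apply Theorem~\ref{ W_L iso iff L} to get $W_L$ isometric, then exhibit preimages of basis vectors (or check $W_LW_L^*=I$ via Proposition~\ref{Lemma: Adjoint of W}). That part is fine.

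The converse, however, has a genuine gap. You correctly observe that $W_L$ sends $(\Omega\otimes\cle)^\perp$ into itself. But the inference ``unitarity $+$ $W_L M^\perp\subseteq M^\perp$ $\Rightarrow$ $W_L M = M$'' is false in general (think of the bilateral shift and $M=\ell^2(\Z_{<0})$). What actually follows is only $M\subseteq W_L M$, i.e.\ $\Omega\otimes\cle\subseteq L\cle$. Your closing line ``isometry of $L$ already gives the reverse inclusion'' does not supply $L\cle\subseteq\Omega\otimes\cle$; an isometry with range containing $\Omega\otimes\cle$ can certainly have strictly larger range. So as written the argument does not close.

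The paper proceeds differently for this direction: since $W_L$ is unitary and $W_L(S_n\otimes I_\cle)=(S_1\otimes I_\cle)W_L$, taking adjoints gives $W_L^*(S_1\otimes I_\cle)=(S_n\otimes I_\cle)W_L^*$, so $(W_L,S^\cle)$ is Nica-covariant. Theorem~\ref{Thm: Nica covariant characterization} then yields $L\cle\subseteq\Omega\otimes\cle$ directly; combined with $W_L^*(\Omega\otimes\cle)\subseteq\Omega\otimes\cle$ (from Proposition~\ref{Lemma: Adjoint of W}) the subspace $\Omega\otimes\cle$ reduces $W_L$, and unitarity of the restriction gives $L\cle=\Omega\otimes\cle$. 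Your route can in fact be repaired without Nica-covariance: once you have $\Omega\otimes\cle\subseteq L\cle$, feed this into the definition \eqref{eqn E_L} of $\cle_L$ to see that the second span contains every $e_1^{\otimes p}\otimes\eta$ for $p\ge1$, forcing $\cle_L\subseteq\Omega\otimes\cle$; then Theorem~\ref{ W_L iso iff L} gives $L\cle\subseteq\cle_L\subseteq\Omega\otimes\cle$ and you are done. But that step is missing from your proposal.
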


When $\cle$ is finite dimensional, we get the following characterization for the Nica-covariant representations of $O_n$(see Theorem \ref{Thm: scalar Nica covariant characterization}):

\begin{theorem}
Let $\mathcal{E}$  be a finite-dimensional Hilbert space and let $L\in \mathcal{B}(\mathcal{E},\mathcal{F}^2_n\otimes \mathcal{E})$ be such that $(W_L,S^{\mathcal{E}})$ becomes an isometric representation of $O_n$. Then, the following are equivalent:
\begin{enumerate}
\item $(W_L,S^{\mathcal{E}})$ is a Nica-covariant representation of $O_n$.
\item $L:\mathcal{E}\to \Omega \otimes\mathcal{E}$ is a unitary.
\item $W_L$ is a unitary.
\end{enumerate}
\end{theorem}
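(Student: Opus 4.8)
The plan is to prove the chain of equivalences by combining the three general classification results already established for arbitrary $\cle$ with the extra leverage that finite-dimensionality provides, namely that an isometry between finite-dimensional spaces of equal dimension is automatically a unitary. First I would dispose of the implications that hold in complete generality: $(2)\Raro(1)$ is immediate from the Nica-covariance classification (Theorem \ref{Thm: Nica covariant characterization}), since $L$ being valued in $\Omega \otimes \cle$ is exactly the constant-symbol condition $L\cle \subseteq \Omega \otimes \cle$; and $(2)\Raro(3)$ is immediate from the unitary Fock representation classification (Theorem \ref{W_L uni}), since $L\cle = \Omega \otimes \cle$ forces $W_L$ to be unitary. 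Likewise $(3)\Raro(2)$ follows from the same theorem: if $W_L$ is unitary then $L\cle = \Omega \otimes \cle$, and since $L$ is already assumed to be an isometry (so that $(W_L, S^\cle)$ is an isometric representation), $L$ is a unitary onto $\Omega \otimes \cle$. So the only substantive content is $(1)\Raro(2)$.

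For $(1)\Raro(2)$, I would start from the Nica-covariance classification, which gives $L\cle \subseteq \Omega \otimes \cle$. Identify $\Omega \otimes \cle$ with $\cle$ in the obvious way, so that $L$ may be regarded as an isometry from the finite-dimensional space $\cle$ into itself. The key step is then purely linear-algebraic: an isometry from a finite-dimensional Hilbert space into itself is surjective, hence unitary, because it is an injective linear endomorphism of a finite-dimensional space. This upgrades $L\cle \subseteq \Omega \otimes \cle$ to $L\cle = \Omega \otimes \cle$, which is statement $(2)$ in the form ``$L$ is a unitary onto $\Omega \otimes \cle$,'' and it simultaneously gives $(3)$ via Theorem \ref{W_L uni}.

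Putting these together, the cycle $(1)\Raro(2)\Raro(3)\Raro(2)\Raro(1)$ closes all the equivalences; alternatively one can present it as $(2)\Leftrightarrow(3)$ in full generality (for isometric $L$) from Theorem \ref{W_L uni}, together with $(2)\Raro(1)$ always and $(1)\Raro(2)$ using finite dimensionality. I do not anticipate a genuine obstacle here: the only place finite-dimensionality is used is the elementary fact that an isometric endomorphism of a finite-dimensional space is onto, and everything else is a direct invocation of the three preceding theorems. The one point to be careful about is the identification $\Omega \otimes \cle \cong \cle$ and making sure that ``$L$ is an isometry'' in the hypothesis is the same notion as ``$L$ is a unitary onto $\Omega\otimes\cle$'' in conclusion $(2)$ once surjectivity is known; this is a matter of bookkeeping rather than a real difficulty.
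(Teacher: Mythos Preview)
Your proposal is correct and follows essentially the same route as the paper: the crucial implication $(1)\Raro(2)$ is handled identically, by invoking Theorem~\ref{Thm: Nica covariant characterization} to get $L\cle\subseteq\Omega\otimes\cle$ and then using that an isometric endomorphism of a finite-dimensional space is surjective (the paper phrases this as ``rank--nullity''). The only organizational difference is that for $(3)\Raro(1)$ the paper argues in one line directly from the odometer relation $W_L(S_n\otimes I_\cle)=(S_1\otimes I_\cle)W_L$ (conjugating by the unitary $W_L$ gives $W_L^*(S_1\otimes I_\cle)=(S_n\otimes I_\cle)W_L^*$), whereas you route through $(2)$ via Theorem~\ref{W_L uni}; both are fine.
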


In the scalar case, the above further simplifies to the following (see Corollary \ref{scalar}): Let $\xi \in \clf^2_n$, and suppose $(W_{\xi}, S)$ is an isometric Fock representation of $O_n$ on $\clf^2_n$. Then the following are equivalent:
\begin{enumerate}
\item $(W_{\xi}, S)$ is a Nica covariant representation of $O_n$.
\item There exists $c \in \T$ such that $\xi= c \Omega$.
\item $W_{\xi}$ is unitary.
\end{enumerate}

Now we turn to the noncommutative dilation theory, and connect it with representations of $O_n$. Two row operators $T=(T_1, \ldots, T_n)$ on $\clh$ and $R = (R_1, \ldots, R_n)$ on $\clk$ are considered \textit{unitarily equivalent} if there exists a unitary $U \in \clb(\clh, \clk)$ such that $U T_i = R_i U$ for all $i=1, \ldots, n$. This is abbreviated as
\[
T \cong R.
\]
The defect space $\cld_{T^*}$ of a row contraction $T$ on $\clh$ is defined by
\[
\cld_{T^*} = \overline{\text{ran}}\Big(I_{\clh} - \sum_{j=1}^{n} T_j T^*_j\Big).
\]
The noncommutative dilation theorem \cite[Theorem 2.1]{Popescue} states the following (the present language is adapted to our current context): There exists an isometry $\Pi_T: \clh \raro \clf^2_n \otimes \cld_{T^*}$ such that
\[
\Pi_T T_i^* = (S_i \otimes I_{\cld_{T^*}})^* \Pi_T,
\]
for all $i=1, \ldots, n$. Moreover, $\clq_T$ is a joint $(S_1^* \otimes I_{\cld_{T^*}}, \ldots, S_n^* \otimes I_{\cld_{T^*}})$-invariant subspace of $\clf^2_n \otimes \cld_{T^*}$, where $\clq_T$ is the \textit{model space} defined by
\[
\clq_T = \Pi_T \clh,
\]
and
\[
T \cong P_{Q_T} S^{D_{T^*}} |_{Q_T}.
\]
Now, in addition, we assume that $(W, T)$ is a contractive representation of $O_n$. In Theorem \ref{thm dil lift} and Corollary \ref{cor model to odometer}, we prove the following dilation (or odometer lift on the model space) theorem:

\begin{theorem}
Let $(W, T)$ be a contractive representation of $O_n$. Then there exists a symbol $L \in \clb(\cld_{T^*}, \clf^2 \otimes \cld_{T^*})$ such that
\[
\Pi_T W^* = W_L^* \Pi_T.
\]
Moreover, $(W,T) \ \cong P_{Q_T} (W_L, S^{D_{T^*}}) |_{Q_T}$.
\end{theorem}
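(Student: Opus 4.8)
The plan is to manufacture the symbol $L$ by transporting $W$ itself to the Fock space through the dilation map, and then to deduce everything from a single intertwining identity. Write $\cld := \cld_{T^*}$, $D := (I_\clh - \sum_{j=1}^n T_j T_j^*)^{1/2}$ so that $\cld = \overline{\ran}\,D$, and $S^\cld = (S_1 \otimes I_\cld, \ldots, S_n \otimes I_\cld)$; recall from \cite{Popescue} that the dilation isometry is the Poisson kernel
\[
\Pi_T h = \sum_{\mu \in F_n^+} e_\mu \otimes D T_\mu^* h \qquad (h \in \clh),
\]
that $\Pi_T^* \Pi_T = I_\clh$ precisely because $T$ is pure, and that $\clq_T = \Pi_T \clh$. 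The candidate symbol will be
\[
L \colon \cld \raro \clf^2_n \otimes \cld, \qquad L\eta := \Pi_T(W D \eta),
\]
which is plainly bounded with $\norm{L} \leq \norm{W}$; one is led to this formula by examining the $\Omega \otimes \cld$-component of the sought relation $\Pi_T W^* = W_L^* \Pi_T$. With $L$ fixed, Theorem~\ref{thm intro odometer maps} supplies the odometer map $W_L$ together with the fact --- needing no further check --- that $(W_L, S^\cld)$ is a Fock representation of $O_n$.

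The core of the proof is to verify $\Pi_T W^* = W_L^* \Pi_T$ by hand, pairing both sides against the total set $\{e_\nu \otimes \zeta : \nu \in F_n^+,\ \zeta \in \cld\}$; the right-hand side always contributes $\langle D T_\nu^* W^* h, \zeta \rangle$. I would then run through the three regimes of the piecewise definition of $W_L$. For $\nu = g_0$ the claim reduces, via $\Pi_T^* \Pi_T = I$, to the tautology $\langle h, W D \zeta \rangle = \langle D W^* h, \zeta \rangle$. If $\nu$ is not of the form $g_n^m$, then $W_L(e_\nu \otimes \zeta)$ is again an elementary tensor $e_\sigma \otimes \zeta$, where $\sigma$ is the odometer successor of $\nu$ (turn the leading block of $n$'s into $1$'s, then raise the first remaining digit by one), and the identity collapses to $W T_\nu = T_\sigma$ --- which is exactly the defining relations \eqref{eqn def OS} read iteratively: push $W$ across the leading $T_n$'s using $W T_n = T_1 W$, then apply $W T_i = T_{i+1}$ to the first non-$n$ letter. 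The one case in which $L$ genuinely intervenes, and the one I expect to be the main obstacle, is the full carry $\nu = g_n^m$: here $W_L(e_\nu \otimes \zeta) = e_1^{\otimes m} \otimes L\zeta$, the pairing expands into $\sum_{\rho \in F_n^+} \langle D T_\rho^* (T_1^*)^m h,\, D T_\rho^* (W D \zeta) \rangle$, and the point is to recognise this as $\langle \Pi_T((T_1^*)^m h), \Pi_T(W D \zeta) \rangle = \langle (T_1^*)^m h, W D \zeta \rangle$ by the isometry property, after which the iterated relation $W T_n^m = T_1^m W$ (hence $(T_n^*)^m W^* = W^* (T_1^*)^m$) rewrites it as $\langle D T_\nu^* W^* h, \zeta \rangle$, matching. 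Thus purity of $T$ enters in an essential way exactly at this step.

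Granting the intertwining $\Pi_T W^* = W_L^* \Pi_T$, the remaining assertions are formal. Applying it to $\clh$ gives $W_L^* \clq_T = \Pi_T W^* \clh \subseteq \clq_T$, so $\clq_T$ is $W_L^*$-invariant; setting $B := P_{\clq_T} W_L|_{\clq_T}$ we get $B^* = W_L^*|_{\clq_T}$, the intertwining reads $\Pi_T W^* = B^* \Pi_T$, and taking adjoints (with $\Pi_T$ a unitary from $\clh$ onto $\clq_T$) yields $\Pi_T W = B \Pi_T$. Together with the model relations $\Pi_T T_i = (P_{\clq_T}(S_i \otimes I_\cld)|_{\clq_T}) \Pi_T$ already contained in the noncommutative dilation theorem, this shows that $\Pi_T$ implements the unitary equivalence $(W, T) \cong P_{\clq_T}(W_L, S^{D_{T^*}})|_{\clq_T}$; in particular the compression on the right is itself a contractive representation of $O_n$.
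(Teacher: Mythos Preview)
Your proposal is correct and follows essentially the same approach as the paper: the symbol you define, $L\eta = \Pi_T(WD\eta)$, coincides with the paper's $L\eta = \Pi_T W \Pi_T^*(\Omega\otimes\eta)$ (since $\Pi_T^*(\Omega\otimes\eta) = D\eta$), and both arguments then verify the intertwining $W\Pi_T^* = \Pi_T^* W_L$ on basis vectors $e_\nu\otimes\zeta$ via the same three cases ($\nu=g_0$; $\nu$ not a pure $g_n$-word; $\nu=g_n^m$). The only cosmetic difference is that you invoke the explicit Poisson-kernel formula for $\Pi_T$ and compute inner products, whereas the paper works abstractly through the relations $\Pi_T^*(S_k\otimes I) = T_k\Pi_T^*$ and never writes $\Pi_T$ down.
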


We now turn to invariant subspaces of Fock representations. Invariant subspaces of any natural operator are of interest, and in our case, concrete representations of invariant subspaces would also induce sub-representations of $O_n$. Let $\cle$ be a Hilbert space. By an \textit{invariant subspace of} $\clf^2_n \otimes \cle$, we mean a closed subspace $\cls$ such that
\[
(S_i \otimes I_{\cle}) \cls \subseteq \cls,
\]
for all $i=1, \ldots, n$. In addition, if $L \in \clb(\cle, \clf^2_n \otimes \cle)$ is a symbol, and
\[
W_L \cls \subseteq \cls,
\]
then we say that $\cls$ is an \textit{invariant subspace of the Fock representation} $(W_L, S^\cle)$. Clearly, if $\cls$ is an invariant subspace of $(W_L, S^\cle)$, then $(W_L|_{\cls}, S^\cle|_{\cls})$ on $\cls$ is a contractive representation of $O_n$, where
\[
S^\cle|_{\cls} = ((S_1 \otimes I_{\cle})|_{\cls}, \ldots, (S_n \otimes I_{\cle})|_{\cls}).
\]
We call $(W_L|_{\cls}, S^\cle|_{\cls})$ a \textit{subrepresentation} of the Fock representation $(W_L, S^\cle)$. From this vantage point, it is therefore, natural to ask for a thorough description of subrepresentations of Fock representations. We will provide a clear answer to this question, and to clarify what we mean, we need to discuss the invariant subspaces of vector-valued Fock spaces.

First, we recall the noncommutative Beurling-Lax Halmos theorem \cite[Theorem 2.2]{Popescue char}: Let $\cls \subseteq \clf^2_n \otimes \cle$ be a closed subspace. Then $\cls$ is an invariant subspace if and only if there exist a Hilbert space $\cle_*$ and an inner multi-analytic operator $\Phi \in \clb(\clf^2_n \otimes \cle_*, \clf^2_n \otimes \cle)$ such that
\[
\cls = \Phi(\clf^2_n \otimes \cle_*).
\]
Recall that an operator $\Phi \in \clb(\clf^2_n \otimes \cle_*, \clf^2_n \otimes \cle)$ is inner multi-analytic if $\Phi$ is an isometry and
\[
\Phi (S_i \otimes I_{\cle_*}) = (S_i \otimes I_{\cle}) \Phi,
\]
for all $i=1, \ldots, n$. We use standard techniques to reveal more information about $\Phi$: If $\cls = \Phi(\clf^2_n \otimes \cle_*)$ is an invariant subspace of $\clf^2_n \otimes \cle$ as above, then there exists a unitary $\Pi: \clf^2_n \otimes \cle_* \raro \cls$ such that
\[
\Phi = i_\cls \circ \Pi,
\]
where $i_\cls: \cls \hookrightarrow \clf^2_n \otimes \cle$ is the isometric embedding. Therefore, the following diagram commutes: 

\setlength{\unitlength}{2.8mm}
\begin{center}
\begin{picture}(40,16)(0,0)
\put(15.7,3){$\cls$}\put(20,2.3){$i_\cls$}
\put(23.9,3){$\clf^2_n \otimes \cle$} \put(22, 10){$\clf^2_n \otimes \cle_*$} \put(24,9.2){ \vector(0,-1){5}} \put(22,
9.4){\vector(-1,-1){5.3}} \put(17,3.4){\vector(1,0){5.9}}\put(18,7.2){$\Pi$}\put(24.7,6.5){$\Phi$}
\end{picture}
\end{center}

This additional information about the factorization of inner multi-analytic operator plays a crucial role in proving the following (see Theorem \ref{invariant subspace}):

\begin{theorem}
Let $(W_L, S^\cle)$ be a Fock representation, and let $\cls \subseteq \clf^2_n \otimes \cle$ be a closed subspace. Then $\cls$ is invariant under $(W_L, S^\cle)$ if and only if there exist a Hilbert space $\cle_*$, an inner multi-analytic operator $\Phi \in \clb(\clf^2_n \otimes \cle_*, \clf^2_n \otimes \cle)$, and a Fock representation $(W_{L_*}, S^{\cle_*})$ for some symbol $L_* \in \clb(\cle_*, \clf^2_n \otimes \cle_*)$ such that
\[
\cls = \Phi (\clf^2_n \otimes \cle_*),
\]
and
\[
W_L \Phi = \Phi W_{L_*}.
\]
\end{theorem}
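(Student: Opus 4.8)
The plan is to prove the two directions separately, with the substantive content being the ``only if'' direction, where we must manufacture the symbol $L_*$ on the coefficient space $\cle_*$ of the Beurling--Lax--Halmos representer.

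For the ``if'' direction, suppose $\cls = \Phi(\clf^2_n \otimes \cle_*)$ with $\Phi$ inner multi-analytic and $W_L \Phi = \Phi W_{L_*}$ for some symbol $L_*$. Since $\Phi$ intertwines $S_i \otimes I_{\cle_*}$ with $S_i \otimes I_\cle$, the range $\cls$ is automatically $(S_i \otimes I_\cle)$-invariant; and from $W_L \Phi = \Phi W_{L_*}$ we get $W_L \cls = W_L \Phi(\clf^2_n \otimes \cle_*) = \Phi W_{L_*}(\clf^2_n \otimes \cle_*) \subseteq \Phi(\clf^2_n \otimes \cle_*) = \cls$. So $\cls$ is invariant under $(W_L, S^\cle)$. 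This direction is routine.

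For the ``only if'' direction, assume $\cls$ is invariant under $(W_L, S^\cle)$. Apply the noncommutative Beurling--Lax--Halmos theorem to get $\cle_*$ and an inner multi-analytic $\Phi \in \clb(\clf^2_n \otimes \cle_*, \clf^2_n \otimes \cle)$ with $\cls = \Phi(\clf^2_n \otimes \cle_*)$, together with the factorization $\Phi = i_\cls \circ \Pi$ for a unitary $\Pi \colon \clf^2_n \otimes \cle_* \to \cls$ as in the diagram above. Since $W_L$ leaves $\cls$ invariant, the compression $W_L|_\cls \in \clb(\cls)$ is a well-defined operator, and because $S^\cle|_\cls$ is a row isometry on $\cls$ with $(W_L|_\cls, S^\cle|_\cls)$ satisfying \eqref{eqn def OS} (as noted in the excerpt, it is a contractive representation), we can transport everything through the unitary $\Pi$: define $V := \Pi^* (W_L|_\cls) \Pi \in \clb(\clf^2_n \otimes \cle_*)$. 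Then $(V, S^{\cle_*})$ is a representation of $O_n$: indeed $\Pi^* (S_i \otimes I_\cle)|_\cls \Pi = S_i \otimes I_{\cle_*}$ because $\Phi = i_\cls \Pi$ intertwines the shifts, and conjugating the identities $W_L|_\cls (S_i \otimes I_\cle)|_\cls = (S_{i+1} \otimes I_\cle)|_\cls$ (resp. $= (S_1 \otimes I_\cle)|_\cls\, W_L|_\cls$) by $\Pi$ yields exactly \eqref{eqn def OS} for $(V, S^{\cle_*})$. Hence $(V, S^{\cle_*})$ is a Fock representation, and Theorem \ref{thm intro odometer maps} supplies a (unique) symbol $L_* \in \clb(\cle_*, \clf^2_n \otimes \cle_*)$ with $V = W_{L_*}$. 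Finally, $W_L \Phi = W_L i_\cls \Pi = i_\cls (W_L|_\cls) \Pi = i_\cls \Pi (\Pi^* (W_L|_\cls) \Pi) = \Phi W_{L_*}$, which is the desired intertwining.

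The main obstacle I anticipate is not any single hard estimate but rather the careful bookkeeping that $(V, S^{\cle_*})$ genuinely lands in the scope of Theorem \ref{thm intro odometer maps}: one must check that $V$ is honestly a bounded operator on all of $\clf^2_n \otimes \cle_*$ (immediate, since $\Pi$ is unitary and $W_L|_\cls$ is bounded) and, crucially, that the algebraic relations \eqref{eqn def OS} survive the two compressions --- first restricting $W_L$ and $S_i \otimes I_\cle$ to the \emph{jointly} invariant subspace $\cls$ (here one uses that $\cls$ is invariant for \emph{both} $W_L$ and every $S_i \otimes I_\cle$, so no cross-terms are lost), and then conjugating by $\Pi$. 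A secondary point worth spelling out is why $\Pi^*(S_i \otimes I_\cle)|_\cls \Pi = S_i \otimes I_{\cle_*}$ exactly (not merely up to something): this is precisely the statement that $\Phi$ intertwines the shifts, rewritten via $\Phi = i_\cls \Pi$ and the fact that $i_\cls^* i_\cls = I_\cls$ while $i_\cls i_\cls^* = P_\cls$. Once these compatibilities are in hand, uniqueness of $L_*$ is inherited from Theorem \ref{thm intro odometer maps}, and the proof closes.
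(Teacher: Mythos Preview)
Your argument is correct and in fact cleaner than the paper's. Both proofs start the same way: apply the noncommutative Beurling--Lax--Halmos theorem with the factorization $\Phi = i_\cls \circ \Pi$, and then study the operator $C := \Phi^* W_L \Phi$ (equivalently, your $V = \Pi^* (W_L|_\cls) \Pi$; these coincide since $i_\cls^* W_L i_\cls = W_L|_\cls$ by invariance). The divergence is in how one shows $C$ is an odometer map. You simply verify, by conjugating the relations $W_L(S_i\otimes I_\cle) = S_{i+1}\otimes I_\cle$ and $W_L(S_n\otimes I_\cle) = (S_1\otimes I_\cle)W_L$ through the unitary $\Pi$, that $(V,S^{\cle_*})$ is a Fock representation, and then invoke Theorem~\ref{Prop: Odometer operator} as a black box to produce $L_*$. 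The paper instead \emph{defines} $L_* = \Pi^* W_L|_{\cle_*}$ up front, builds $W_{L_*}$ from it, and then checks by hand on each basis vector $e_k \otimes e_\mu \otimes \eta$ (separating the cases $k<n$, $e_n\otimes\eta$, and $e_n\otimes e_\mu\otimes\eta$) that $\Phi^* W_L \Phi = W_{L_*}$, using Lemma~\ref{action of M_theta} repeatedly.

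Your route is shorter because it recognises that the basis-vector computation the paper carries out is essentially re-proving Theorem~\ref{Prop: Odometer operator} in disguise. The paper's route has the minor advantage of making the formula $L_* = \Pi^* W_L|_{\cle_*}$ explicit from the outset, but your argument recovers the same formula immediately from $L_*\eta = W_{L_*}(\Omega\otimes\eta) = V(\Omega\otimes\eta) = \Pi^* W_L \eta$.
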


 
In the setting of this theorem, the existence of the inner multi-analytic operator $\Phi$ is due to Popescu. However, the fact that $\Phi$ admits the factorization $\Phi = i_\cls \circ \Pi$ is new. Now, given $\cls$ as an invariant subspace with the inner function $\Phi$ as above, if we also consider the odometer map $W_L$, saying that $\cls$ is invariant under $W_L$ becomes equivalent to finding a unique solution $X \in \clb(\clf^2_n \otimes \cle_*)$ to the operator equation  (see the discussion preceding Theorem \ref{invariant subspace})
\[
W_L \Phi = \Phi X.
\]
Theorem \ref{invariant subspace} establishes that this condition occurs exclusively when $X$ is an odometer map $W_{L_*}$ with a unique symbol $L_* \in \clb(\cle_*, \clf^2_n \otimes \cle_*)$. Furthermore, the operator $L_*$ is defined as
\[
L_* = \Pi^* W_L|_{\cle_*}.
\]

As a special case of the invariant subspace theorem, we have the following answer to the question concerning subrepresentations of Fock representations (see Corollary \ref{cor Unit equiv Fock rep} for more details):

\begin{cor}
Let $\cls$ be an invariant subspace of a Fock representation $(W_L, S^\cle)$. Then the subrepresentation $(W_L|_{\cls}, S^\cle|_{\cls})$ is also a Fock representation.
\end{cor}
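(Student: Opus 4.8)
The plan is to invoke Theorem~\ref{invariant subspace} and then observe that the intertwining map it produces already does all the work. Since $\cls$ is invariant under $(W_L, S^\cle)$, that is, $\cls$ is $(S_i \otimes I_\cle)$-invariant for all $i$ and $W_L \cls \subseteq \cls$, Theorem~\ref{invariant subspace} supplies a Hilbert space $\cle_*$, an inner multi-analytic operator $\Phi \in \clb(\clf^2_n \otimes \cle_*, \clf^2_n \otimes \cle)$, and a symbol $L_* \in \clb(\cle_*, \clf^2_n \otimes \cle_*)$ such that $(W_{L_*}, S^{\cle_*})$ is a Fock representation, $\cls = \Phi(\clf^2_n \otimes \cle_*)$, and $W_L \Phi = \Phi W_{L_*}$; since $\Phi$ is inner multi-analytic we also have $\Phi(S_i \otimes I_{\cle_*}) = (S_i \otimes I_\cle)\Phi$ for all $i = 1, \ldots, n$. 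From the discussion preceding Theorem~\ref{invariant subspace}, $\Phi$ factors as $\Phi = i_\cls \circ \Pi$, where $i_\cls : \cls \hookrightarrow \clf^2_n \otimes \cle$ is the isometric embedding and $\Pi : \clf^2_n \otimes \cle_* \raro \cls$ is a unitary (with $L_* = \Pi^* W_L|_{\cle_*}$).

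The next step is to transfer the intertwining relations from $\Phi$ to $\Pi$. Since $\cls$ is $(S_i \otimes I_\cle)$-invariant, $(S_i \otimes I_\cle)\, i_\cls = i_\cls\, (S_i \otimes I_\cle)|_{\cls}$, and since $\cls$ is $W_L$-invariant, $W_L\, i_\cls = i_\cls\, W_L|_{\cls}$. Substituting $\Phi = i_\cls \circ \Pi$ into the two intertwining identities for $\Phi$ yields
\[
i_\cls\, \Pi\, (S_i \otimes I_{\cle_*}) = i_\cls\, (S_i \otimes I_\cle)|_{\cls}\, \Pi \quad \text{and} \quad i_\cls\, \Pi\, W_{L_*} = i_\cls\, W_L|_{\cls}\, \Pi ,
\]
and cancelling the injective map $i_\cls$ on the left gives $\Pi\, (S_i \otimes I_{\cle_*}) = (S_i \otimes I_\cle)|_{\cls}\, \Pi$ for all $i$, together with $\Pi\, W_{L_*} = W_L|_{\cls}\, \Pi$. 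Thus the unitary $\Pi$ implements
\[
(W_L|_{\cls},\, S^\cle|_{\cls}) \cong (W_{L_*},\, S^{\cle_*}),
\]
so the subrepresentation $(W_L|_{\cls}, S^\cle|_{\cls})$ is, up to this unitary equivalence, the Fock representation $(W_{L_*}, S^{\cle_*})$.

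I do not expect a genuine obstacle here: the substance lives in Theorem~\ref{invariant subspace}, and the argument above is essentially bookkeeping. The only place requiring care is the cancellation step — one must first use the invariance of $\cls$ to commute $i_\cls$ past $S_i \otimes I_\cle$ and past $W_L$, and only then remove $i_\cls$ by injectivity. One should also keep in mind that $(W_L|_{\cls}, S^\cle|_{\cls})$ is already known to be a contractive representation of $O_n$ (as recorded just before the corollary), so the conclusion ``is also a Fock representation'' is to be read as ``is unitarily equivalent, via $\Pi$, to a Fock representation.''
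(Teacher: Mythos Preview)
Your proof is correct and follows essentially the same route as the paper. The paper's unitary $U$ defined by $U(f\otimes\eta)=\Phi(f\otimes\eta)$ is exactly your $\Pi$ (i.e., $\Phi$ with codomain restricted to $\cls$), and where the paper simply asserts that the intertwining relations hold ``evidently,'' you spell out the cancellation of $i_\cls$; this is just added detail, not a different argument.
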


We finally remark that the representations of $O_n$ yield pairs of commuting isometries when $n = 1$. Moreover, pairs of doubly commuting isometries arise from Nica-covariant representations of $O_n$.

The remaining sections of the paper are organized as follows: Section \ref{sec Fock repr} is the backbone of this paper, where we completely characterize Fock representations. Section \ref{sec isom Fock rep} provides an extensive classification of isometric Fock representations. Next, in Section \ref{sec Nica}, we present a complete description of Nica-covariant representations. This has been made possible by using explicit representations of conjugates of isometric odometer operators, which are also computed in this section. Section \ref{sec free rep} connects dilation theory and Fock representations. More specifically, we prove that every contractive representation of $O_n$ admits a dilation (or odometer lifting) in the sense of Fock representations. In Section \ref{sec subrepr}, we classify invariant subspaces of Fock representations and essentially prove that subrepresentations of Fock representations are also Fock representations. The final section, Section 7, presents examples of various contractive representations of $O_n$.

\section{Fock representations}\label{sec Fock repr}

Let $\cle$ be a Hilbert space. Recall that
\[
S^{\cle} = (S_1 \otimes I_{\cle}, \ldots, S_n \otimes I_{\cle}),
\]
denotes the $n$-tuple of creation operators on the $\cle$-valued Fock space $\clf_n^2 \otimes \cle$. In this section, we aim at classifying Fock representations of $O_n$ (see Definition \ref{def Fock rep}). In the following, we introduce a new class of operators that depend on symbols that are operators in $\clb(\cle, \clf^2_n \otimes \cle)$. Recall that $\{e_\mu: \mu \in F_n^+\}$ is an orthonormal basis for $\clf^2_n$.

\begin{definition}\label{def odo map}
Let $\cle$ be a Hilbert space and let $L \in \clb(\cle, \clf^2_n \otimes \cle)$. The odometer map $W_L: \clf^2_n \otimes \cle \raro \clf^2_n \otimes \cle$ with symbol $L$ is defined by
\[
W_{L}(\Omega \otimes \eta) = L\eta,
\]
for all $\eta \in \cle$, and for each $e_\mu = e_{{\mu_{1}}} \otimes\cdots \otimes e_{\mu_{m}}$, $m \geq 1$, define
\[
W_{L}(e_{\mu} \otimes \eta) =
\begin{cases}
e_{\mu_1+1} \otimes e_{\mu_2}\otimes\cdots \otimes e_{\mu_{m}} \otimes \eta & \mbox{if } \mu_{1} \neq n
\\
e_{{1}} \otimes e_{{\mu_{2}+1}} \otimes e_{\mu_3} \otimes \cdots \otimes e_{\mu_{m}} \otimes \eta & \mbox{if } \mu_{1}=n, \mu_{2} \neq n
\\
e^{\otimes 2}_{1}   \otimes e_{{\mu_{3}+1}} \otimes e_{\mu_4} \otimes \cdots \otimes e_{\mu_{m}} \otimes \eta & \mbox{if } \mu_{1}=\mu_{2}=n, \mu_{3} \neq n
\\
\vdots & \vdots
\\
e^{\otimes m}_{1} \otimes L \eta & \mbox{if } \mu_{1}=\cdots= \mu_{m}=n.
\end{cases}
\]
\end{definition}

A couple of comments are in order. Since $\{e_\mu: \mu \in F_n^+\}$ is an orthonormal basis for $\clf^2_n$, it follows that $W_L \in \clb(\clf^{2}_n \otimes \cle)$ (see Remark \ref{remark odometer maps} for more details). Second, the unitary odometer maps were introduced by Li in \cite{Boyu Li22}. The fundamental difference in his case is that the symbol $L$ is a self-map, that is, $L \in \clb(\cle)$ (instead of $L \in \clb(\cle, \clf^2_n \otimes \cle)$). Moreover, in the case of unitary odometer maps, Li proved that the symbols become unitary. This will be observed again in Theorem \ref{W_L uni}. Before we go any further, we introduce the notion of the length of words. The \textit{length} of the word $\mu \in F_n^+$ is defined by	
\[
\vert \mu\vert =
\begin{cases}
k & \mbox{if } \mu = g_{\mu_{1}}g_{\mu_{2}}\hdots g_{\mu_{k}} \\
0 & \mbox{if } \mu = g_{0}.
\end{cases}
\]
Now we are ready for the general classification and representations of odometer maps:

\begin{theorem}\label{Prop: Odometer operator}
Let $\cle$ be a Hilbert space, and let $W \in \clb(\clf^2_n \otimes \cle)$. Then $(W, S^{\cle})$ is a Fock representation if and only if there is a unique symbol $L \in \clb(\cle,  \clf^2_n \otimes \cle)$ such that
\[
W = W_L.
\]
\end{theorem}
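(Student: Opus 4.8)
The plan is to prove both directions directly from the defining relations \eqref{eqn def OS} for $(W, S^\cle)$, namely $(S_1 \otimes I_\cle) W = W (S_n \otimes I_\cle)$ ... wait, let me re-read: the relations are $W T_i = T_{i+1}$ for $i < n$ and $W T_n = T_1 W$. So for a Fock representation we need $W (S_i \otimes I_\cle) = S_{i+1} \otimes I_\cle$ for $i = 1, \dots, n-1$ and $W(S_n \otimes I_\cle) = (S_1 \otimes I_\cle) W$.

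For the ``if'' direction, I would take $L \in \clb(\cle, \clf^2_n \otimes \cle)$, form $W_L$ as in Definition \ref{def odo map}, and verify the two relations by evaluating both sides on the orthonormal-type spanning set $\{e_\mu \otimes \eta : \mu \in F_n^+, \eta \in \cle\}$. The point is that $(S_i \otimes I_\cle)(e_\mu \otimes \eta) = e_i \otimes e_\mu \otimes \eta = e_{g_i \mu} \otimes \eta$, so applying $W_L$ to this prepends the letter $g_i$ and then runs the odometer's carry rule starting from that new first letter; one checks case by case (on whether $i = n$ or $i < n$, and then on the word $\mu$) that this equals $e_{g_{i+1}\mu}\otimes\eta = (S_{i+1}\otimes I_\cle)(e_\mu\otimes\eta)$ when $i<n$, and equals $(S_1\otimes I_\cle) W_L(e_\mu\otimes\eta)$ when $i = n$ — the latter because carrying past a leading $n$ is exactly ``put a $1$ in front and increment the rest,'' which is what applying $W_L$ first and then $S_1\otimes I_\cle$ does. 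Boundedness of $W_L$ should already be recorded (the nonzero contributions on basis vectors of a given length land in bounded-length pieces, with the only unbounded-direction contributions routed through the single bounded operator $L$); I would cite Remark \ref{remark odometer maps} rather than redo it.

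For the ``only if'' direction, suppose $(W, S^\cle)$ is a Fock representation. The relations force $W(S_{i}\otimes I_\cle) = (S_{i+1}\otimes I_\cle)\cdots$, so iterating, $W(S_{\mu_1}\otimes I_\cle)$ is determined in terms of $W$ acting on a ``shifted'' word together with, eventually, $W$ restricted to the vacuum copy $\Omega\otimes\cle$. Concretely, define $L := W|_{\Omega\otimes\cle} : \cle \to \clf^2_n\otimes\cle$ (identifying $\Omega\otimes\cle$ with $\cle$); this is automatically bounded. Then I claim $W = W_L$: since $\{e_\mu\otimes\eta\}$ spans, it suffices to show $W(e_\mu\otimes\eta) = W_L(e_\mu\otimes\eta)$ for every word $\mu$, which I would do by induction on $|\mu|$ using that $e_\mu \otimes \eta = (S_{\mu_1}\otimes I_\cle) (e_{\mu'}\otimes\eta)$ where $\mu'$ is $\mu$ with its first letter removed, and then rewriting $W(S_{\mu_1}\otimes I_\cle)$ via the $O_n$-relations. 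When the leading letter $\mu_1 \ne n$, the relation $W(S_{\mu_1}\otimes I_\cle) = (S_{\mu_1+1}\otimes I_\cle)$ immediately gives the first case of Definition \ref{def odo map} with no recursion needed. When $\mu_1 = n$, the relation $W(S_n\otimes I_\cle) = (S_1\otimes I_\cle)W$ peels off a leading $1$ and reduces to computing $W$ on the word $\mu'$, which has strictly smaller length; iterating until all leading $n$'s are consumed either exposes a non-$n$ letter (giving the middle cases) or exhausts the word and lands on $W(\Omega\otimes\eta) = L\eta$ (giving the last case). Uniqueness is then immediate: if $W = W_L = W_{L'}$ then evaluating on $\Omega\otimes\eta$ gives $L\eta = W(\Omega\otimes\eta) = L'\eta$ for all $\eta$, so $L = L'$.

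The main obstacle is bookkeeping rather than conceptual: making the induction/iteration on the number of leading $n$'s in the word completely precise, and matching the resulting ``carry'' behaviour term-by-term with the multi-case formula in Definition \ref{def odo map}, including the degenerate boundary cases (words that are entirely $n$'s, and the transition at $\Omega\otimes\cle$). I would organize this by first proving the single-step identity ``$W$ composed with $S_{j}\otimes I_\cle$ acts on $e_{\nu}\otimes\eta$ by the one-step odometer rule'' and then feeding it into a clean induction on $|\mu|$; care is needed that the recursion genuinely decreases length and that boundedness of $W$ (already part of the hypothesis) is what lets us pass from the spanning set to all of $\clf^2_n\otimes\cle$.
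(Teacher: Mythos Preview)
Your proposal is correct and follows essentially the same approach as the paper: both directions are handled by checking the relations on the spanning set $\{e_\mu\otimes\eta\}$, with $L$ defined as $W|_{\Omega\otimes\cle}$ and the odometer carry verified via the identities $W(S_k\otimes I_\cle)=S_{k+1}\otimes I_\cle$ for $k<n$ and $W(S_n\otimes I_\cle)=(S_1\otimes I_\cle)W$. The only cosmetic difference is that the paper compresses your induction on the number of leading $n$'s into the single iterated identity $W(S_n^m\otimes I_\cle)=(S_1^m\otimes I_\cle)W$, which immediately reduces to the base cases $\mu_1\neq n$ or $\mu=g_0$.
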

\begin{proof} Suppose $(W, S^{\cle})$ is a Fock representation. Define $L:\cle \to \clf^2_n \otimes \cle$ by
\[
L \eta = W(\Omega \otimes \eta),
\]
for all $\eta \in \cle$. Uniqueness of $L$ is clear by this definition. We claim that $W=W_L$. To this end, fix $\eta \in \cle$ and $\mu \in {F}^{+}_{n}$. Then for any $k < n$, we have $W (S_k \otimes I_\cle) = S_{k+1} \otimes I_\cle$ and hence
\[
W(e_{k}\otimes e_{\mu} \otimes \eta) = W  (S_k \otimes I_\cle) (e_{\mu} \otimes \eta) = (S_{k+1} \otimes I_\cle) (e_{\mu} \otimes \eta),
\]
which yields
\[
W(e_{k}\otimes e_{\mu} \otimes \eta) = e_{k+1}\otimes e_{\mu} \otimes \eta.
\]
On the other hand, by the definition of $W_L$, we know that $W_L(e_{k}\otimes e_{\mu} \otimes \eta) = e_{k+1}\otimes e_{\mu} \otimes \eta$, and consequently
\begin{equation}\label{eqn W ek}
W(e_{k}\otimes e_{\mu} \otimes \eta)= W_L(e_{k}\otimes e_{\mu} \otimes \eta).
\end{equation}
Now, let $m \geq 1$, and assume that $\mu_1 \neq n$. Since $W (S_n^m \otimes I_\cle) = (S_1^m \otimes I_\cle) W$, it follows that
\[
W(e_{n}^{\otimes m} \otimes e_{\mu} \otimes \eta) = W (S_n^m \otimes I_\cle)(e_{\mu} \otimes \eta) = e_{1}^{\otimes m} \otimes W(e_{\mu}\otimes \eta).
\]
We have two cases to consider: Let $|\mu|=0$, that is, $\mu = g_0$. Then $W(e_{\mu}\otimes \eta) = W(\Omega \otimes \eta) = L\eta$. By the definition of $W_L$, we have that
\[
W(e_{n}^{\otimes m} \otimes \eta) = W_L(e_{n}^{\otimes m} \otimes \eta).
\]
For the next case, assume that $|\mu| > 0$, and let $e_\mu = e_{\mu_1} \otimes \cdots \otimes e_{\mu_k}$, where $k \geq 1$. By \eqref{eqn W ek}, we know that
\[
W(e_{\mu}\otimes \eta) = W_L(e_{\mu}\otimes \eta).
\]
Combining these two cases, we finally conclude that
\[
W(e_{n}^{\otimes m} \otimes e_{\mu} \otimes \eta) = W_L(e_{n}^{\otimes m} \otimes e_{\mu} \otimes \eta),
\]
for all $\mu \in F_n^+$, which completes the proof of the fact that $W = W_L$.

\noindent For the converse, assuming that $W=W_{L}$ for some symbol $L \in \clb(\cle, \clf^{2}_n \otimes \cle)$, we want to prove that $(W, S^{\cle})$ is a Fock representation, that is, to prove that $W_L (S_k \otimes I_\cle) = S_{k+1} \otimes I_\cle$ for all $k<n$, and $W_L (S_n \otimes I_\cle) = (S_1 \otimes I_\cle) W_L$. Fix $\eta \in \cle$ and $\mu \in {F}^{+}_{n}$. For any $k<n$, since $W_L (e_k \otimes \eta) = e_{k+1} \otimes \eta$, we have
\[
W_L (S_{k} \otimes I_\cle) (e_{\mu} \otimes \eta) = W_L(e_{k}\otimes e_{\mu} \otimes \eta) = e_{k+1}\otimes e_{\mu} \otimes \eta = (S_{k+1} \otimes I_\cle) (e_{\mu} \otimes \eta),
\]
which means
\[
W_L (S_{k} \otimes I_\cle) = S_{k+1} \otimes I_\cle,
\]
for all $k< n$. Now we prove that
\[
W_L (S_{n} \otimes I_\cle) = (S_{1} \otimes I_\cle)W_L.
\]
Observe that
\[
(S_1 \otimes I_\cle) W_L (e_{\mu} \otimes \eta) = e_{1}\otimes W_L (e_{\mu}\otimes \eta) = W_L (e_{n}\otimes e_{\mu} \otimes \eta) = W_L (S_n \otimes I_\cle)(e_{\mu}\otimes \eta).
\]
Summarizing, we have $W_L (S_{n} \otimes I_\cle) = (S_{1} \otimes I_\cle)W_L$. This along with $W_L (S_{k} \otimes I_\cle) = S_{k+1} \otimes I_\cle$ for all $k<n$, implies that $(W_L, S^{\cle})$ is a Fock representation.
\end{proof}

In particular, in the scalar case, we have the following:

\begin{cor}\label{Prop: Odometer operator to scalar case}
Let $W \in \clb(\clf^2_n)$. Then $(W, S)$ is a Fock representation if and only if there is unique $\xi \in  \clf^2_n$ such that $W=W_{\xi}.$
\end{cor}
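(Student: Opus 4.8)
The plan is to deduce Corollary~\ref{Prop: Odometer operator to scalar case} directly from Theorem~\ref{Prop: Odometer operator} by specializing to the one-dimensional coefficient space $\cle = \mathbb{C}$. First I would observe that there is a canonical unitary identification $\clf^2_n \otimes \mathbb{C} \cong \clf^2_n$ given by $f \otimes c \mapsto cf$, under which $S_i \otimes I_{\mathbb{C}}$ becomes $S_i$; hence a representation $(W, S)$ of $O_n$ on $\clf^2_n$ is exactly a Fock representation $(W, S^{\mathbb{C}})$ in the sense of Definition~\ref{def Fock rep}. Applying Theorem~\ref{Prop: Odometer operator} with $\cle = \mathbb{C}$, we get that $(W, S)$ is a Fock representation if and only if there is a unique symbol $L \in \clb(\mathbb{C}, \clf^2_n \otimes \mathbb{C})$ with $W = W_L$.

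The second step is to identify symbols $L \in \clb(\mathbb{C}, \clf^2_n \otimes \mathbb{C})$ with vectors in $\clf^2_n$. Since $\clb(\mathbb{C}, \clk) \cong \clk$ for any Hilbert space $\clk$ via $L \mapsto L(1)$, composing with the identification $\clf^2_n \otimes \mathbb{C} \cong \clf^2_n$ gives a bijection between such symbols $L$ and vectors $\xi := L(1) \in \clf^2_n$; this bijection is clearly inverse to $\xi \mapsto (c \mapsto c\xi)$. Under this correspondence, the operator $W_L$ of Definition~\ref{def odo map} becomes the operator on $\clf^2_n$ determined by $W_\xi \Omega = \xi$ and, for $e_\mu = e_{\mu_1} \otimes \cdots \otimes e_{\mu_m}$ with $m \geq 1$, the same branching formula as in Definition~\ref{def odo map} with every ``$\otimes\,\eta$'' suppressed and the last line reading $e_1^{\otimes m} \otimes \xi$ (interpreted in $\clf^2_n$). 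I would simply remark that this is the definition of $W_\xi$ in the scalar case, so no separate verification is needed.

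Combining these two steps: $(W, S)$ is a Fock representation iff $W = W_L$ for a unique $L \in \clb(\mathbb{C}, \clf^2_n \otimes \mathbb{C})$ iff $W = W_\xi$ for a unique $\xi \in \clf^2_n$, where $\xi$ corresponds to $L$ under the above bijection. Uniqueness of $\xi$ transfers from uniqueness of $L$ because the correspondence $L \leftrightarrow \xi$ is a bijection. This is the whole argument.

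I do not anticipate a genuine obstacle here: the statement is a formal specialization of the already-proved Theorem~\ref{Prop: Odometer operator}, and the only thing requiring care is being explicit about the two canonical identifications ($\clf^2_n \otimes \mathbb{C} \cong \clf^2_n$ and $\clb(\mathbb{C}, \clk) \cong \clk$) so that the claim ``$L \in \clb(\cle, \clf^2_n \otimes \cle)$ with $\cle = \mathbb{C}$'' genuinely reads as ``$\xi \in \clf^2_n$''. The mildest point of friction is making sure the reader agrees that $(W,S)$ on $\clf^2_n$ and $(W, S^{\mathbb{C}})$ on $\clf^2_n \otimes \mathbb{C}$ are literally the same data; I would dispatch this in one sentence. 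Accordingly, the proof in the paper can reasonably be left as a one-line ``Take $\cle = \mathbb{C}$ in Theorem~\ref{Prop: Odometer operator} and identify $\clb(\mathbb{C}, \clf^2_n \otimes \mathbb{C})$ with $\clf^2_n$.''
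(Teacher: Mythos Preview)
Your proposal is correct and matches the paper's own proof: the paper simply notes that an operator $L:\mathbb{C}\to\clf^2_n$ is determined by $\xi=L1\in\clf^2_n$, which is exactly your identification $\clb(\mathbb{C},\clf^2_n\otimes\mathbb{C})\cong\clf^2_n$, and then invokes Theorem~\ref{Prop: Odometer operator} with $\cle=\mathbb{C}$. Your write-up is more explicit about the canonical unitary $\clf^2_n\otimes\mathbb{C}\cong\clf^2_n$, but the argument is the same.
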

\begin{proof}
The proof is immediate, given that a bounded linear operator $L: \mathbb{C} \raro \clf^2_n$ can be represented as $L1 = \xi$ for some $\xi \in \clf^2_n$.
\end{proof}

A symbol $L \in  \clb(\cle, \clf^2_n \otimes \cle)$ is said to be \textit{constant} if
\[
L\cle \subseteq \Omega \otimes \cle.
\]
From the above theorem, it is clear that $L$ is a constant symbol if and only if the odometer map $W_L$ satisfies the following condition:
\[
W_L(\Omega \otimes \cle) \subseteq \Omega \otimes \cle.
\]
In the following sections, we will focus on more specific representations within the category of Fock representations. For instance, we will discuss the matter of constant unitary symbols and revisit the flavor of Li's original finding of unitary odometer maps.

\begin{remark}\label{remark odometer maps}
It should be noted that odometer maps are clearly defined for any operator (which we call a symbol) belonging to $\clb(\cle, \clf^2_n \otimes \cle)$. This bears a resemblance to Popescu's concept of noncommutative analytic Toepliz operators \cite{Popescu 95}. However, it is crucial to note that noncommutative analytic Toepliz operators are not defined for all symbols in $\clb(\cle, \clf^2_n \otimes \cle)$. The odometer maps display a clear contrast when viewed from this perspective. We also have additional features about these maps:
\[
\|L\| \leq \| W_{L} \| \leq 1 + \| L \|.
\]
Indeed, for $\eta \in \cle$, we have

\[
\|L\eta\| = \|W_L(\Omega \otimes \eta)\| \leq \|W_L\| \|\eta\|,
\]
which implies that $\|L\| \leq \|W_L\|$. On the other hand, define \( L': \mathbb{C} \to \mathcal{F}^2_n \) by
\[
L' (1) = \Omega.
\]
Denote the corresponding operator \( W_{L'} \in \mathcal{B}(\mathcal{F}^2_n) \) by \( W_{\Omega} \). Set
\[
M = \overline{\text{span}} \{ e^{m}_{n} \otimes \eta : m \geq 0, \eta \in \mathcal{E} \}.
\]
Then
\[
W_{L} = (W_{\Omega} \otimes I) P_{M^{\perp}} + (W_{\Omega} \otimes L) P_{M},
\]
implying that $\| W_{L} \| \leq 1 + \| L \|$.
\end{remark}

We conclude with an example demonstrating that the norm of $L$ can be strictly less than the norm of $W_L$: Let $\cle$ be a Hilbert space. Define $L: \cle \to \mathcal{F}^2_2\otimes \cle$ by
$$
L \eta = \dfrac{1}{\sqrt{2}} (e_1 +e^{\otimes2}_1)\otimes \eta,
$$
for all $\eta\in \cle$. Clearly, $L$ is an isometry and hence $\norm{L}=1$. Fix a unit vector $\eta \in \cle$. Then $\norm{x}=1$, where
\[
x=\dfrac{1}{\sqrt{2}} (e_2+e^{\otimes2}_2)\otimes \eta.
\]
Observe that
\[
W_L x= \dfrac{1}{\sqrt{2}}(e_1 +e_1^{\otimes 2} )\otimes L\eta = \dfrac{1}{\sqrt{2}}(e_1 +e_1^{\otimes 2} )\otimes \dfrac{1}{\sqrt{2}} (e_1 +e^{\otimes2}_1)\otimes \eta =\dfrac{1}{2}(e_1^{\otimes 2}+ 2 e_1^{\otimes 3}+ e_1^{\otimes 4})\otimes \eta,
\]
which implies that
\[
\norm{W_L(x)}^2=\dfrac{3}{2},
\]
and hence
\[
\norm{W_L}\geq \sqrt{\dfrac{3}{2}}>1=\norm{L}.
\]

\section{Isometric Fock representations}\label{sec isom Fock rep}

In this section, we offer a thorough classification of isometric Fock representations. Recall that an isometric Fock representation is a Fock representation $(W, S^{\cle})$ where $W \in \clb(\clf^2_n \otimes \cle)$ is an isometry.

We note a simple fact that will be significant in the subsequent background computation. We partition the orthonormal basis $\{e_\mu: \mu \in F_n^+\}$ into two disjoint sets as follows: 
\begin{equation}\label{eqn partition}
\{e_\mu: \mu \in F_n^+\} = \{e_1^{\otimes m}: m \in \Z_+\} \bigsqcup \{ e_{\mu}: \mu \in F^+_n, |\mu| \geq 1, \mu_i \neq 1 \text{ for some } i\}.
\end{equation}
Throughout this section, we fix a Hilbert space $\cle$ and an orthonormal basis $\{h_p\}_{p\in \Lambda}$ for $\cle$. Note that $\Lambda$ is either a finite or a countably infinite set. Clearly, $\{e_{\mu} \otimes {h_p}: \mu \in F_n^+, p \in \Lambda\}$ is an orthonormal basis for $\clf^2_n \otimes \cle$, and hence
\[
\clf^2_n \otimes \cle = \overline{\rm span}\{ {e_{\mu} \otimes {h_p}} : \mu \in {F}^{+}_{n}, p\in \Lambda\}.
\]
Moreover, given $L\in \mathcal{B} (\cle, \clf^2_n \otimes \cle)$, we define the closed subspace
\begin{equation}\label{eqn E_L}
\cle_L = \overline{\rm span}\{ {e^{\otimes{m}}_{1} \otimes {\eta}} : m \in \mathbb{Z}_{+}, \eta \in \cle\} \ominus \overline{\rm span}\{ {e^{\otimes{p}}_{1} \otimes {L \zeta}} : p \geq 1, \zeta \in \cle\}.
\end{equation}
We are now ready for the classification of isometric Fock representations.

\begin{theorem}\label{ W_L iso iff L}
Let $L\in \mathcal{B} (\cle, \clf^2_n \otimes \cle)$. Then $(W_L, S^{\cle})$ is an isometric Fock representation if and only if the following conditions hold:
\begin{enumerate}
\item $L$ is an isometry.
\item $L \cle \subseteq \cle_L$.
\end{enumerate}
\end{theorem}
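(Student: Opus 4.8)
The plan is to analyze the action of $W_L$ on the orthonormal basis $\{e_\mu \otimes h_p\}$ and to exploit the partition \eqref{eqn partition}. First I would observe that $W_L$ maps the subspace $\cll := \overline{\rm span}\{e_\mu \otimes \eta : \mu \in F_n^+, |\mu| \geq 1, \mu_i \neq 1 \text{ for some } i\} = (S_1\otimes I_\cle)$-complement stuff into the orthogonal complement of $\cln := \overline{\rm span}\{e_1^{\otimes m}\otimes \eta : m \in \Z_+, \eta \in \cle\}$; more precisely, chasing through the cases in Definition \ref{def odo map}, $W_L$ sends a basis vector $e_\mu \otimes \eta$ with $\mu \notin \{g_1^{m}\}$ to a vector of the form $e_1^{\otimes j}\otimes e_{k}\otimes(\cdots)\otimes\eta$ with $k \neq 1$ (for some $0 \le j < |\mu|$), hence into $\cln^\perp$, whereas $W_L$ sends $e_1^{\otimes m}\otimes\eta$ to $e_1^{\otimes m}\otimes L\eta$. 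Using the relation $W_L(S_n\otimes I_\cle) = (S_1\otimes I_\cle)W_L$ repeatedly, one checks that $W_L$ restricted to $\cln$ is unitarily the ``ampliation'' $\bigoplus_{m\geq 0} L$ acting as $e_1^{\otimes m}\otimes\eta \mapsto e_1^{\otimes m}\otimes L\eta$, and that $W_L$ maps the complementary basis set injectively onto an orthonormal set inside $\cln^\perp$ with the explicit ``shift-by-index'' formula; in particular $W_L\big(\cln^\perp\big)$ is always an orthonormal set, so $W_L$ is an isometry \emph{if and only if} its restriction to $\cln$ is an isometry \emph{and} $W_L(\cln) \perp W_L(\cln^\perp)$.

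Next I would translate these two requirements. The restriction to $\cln$ being isometric: since $\{e_1^{\otimes m}\otimes h_p : m\in\Z_+, p\in\Lambda\}$ is an orthonormal basis of $\cln$ and $W_L(e_1^{\otimes m}\otimes h_p) = e_1^{\otimes m}\otimes Lh_p$, one computes $\langle W_L(e_1^{\otimes m}\otimes h_p), W_L(e_1^{\otimes m'}\otimes h_q)\rangle = \delta_{mm'}\langle Lh_p, Lh_q\rangle$, so isometry on $\cln$ is equivalent to $\langle Lh_p, Lh_q\rangle = \delta_{pq}$ for all $p,q$, i.e. to condition (1), $L$ is an isometry. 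For the orthogonality $W_L(\cln)\perp W_L(\cln^\perp)$: I would note $W_L(\cln) = \overline{\rm span}\{e_1^{\otimes m}\otimes L\eta : m\in\Z_+\} = \overline{\rm span}\{e_1^{\otimes p}\otimes L\zeta : p\geq 0\}$, while the image $W_L(\cln^\perp)$ is spanned by vectors $e_1^{\otimes j}\otimes e_k \otimes(\cdots)\otimes\eta$ with $k\neq 1$, which are automatically orthogonal to every $e_1^{\otimes p}\otimes L\zeta$ with $p\geq 1$ but may fail to be orthogonal to $L\zeta$ itself (the $p=0$ term, living in the $|\cdot|\geq 1$ layers), and also $W_L(\cln)$ includes the vacuum-layer term $L\zeta$ only via its own $p=0$ piece. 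The cleanest route: $W_L(\cln^\perp) \subseteq \cln^\perp \ominus (\Omega\otimes\cle)$ plus lower layers — actually $W_L(\cln^\perp)$ lies in the closed span of all $e_\mu\otimes\eta$ with $\mu$ containing some entry $\neq 1$, call it $\cll$; and $W_L(\cln) = \overline{\rm span}\{e_1^{\otimes p}\otimes L\zeta\}$. So $W_L(\cln)\perp W_L(\cln^\perp)$ holds iff $\overline{\rm span}\{e_1^{\otimes p}\otimes L\zeta : p\geq 0\}\perp \cll$, and since the $p\geq 1$ terms are trivially in $\cln \subseteq \cll^\perp$, this reduces to $L\cle \perp \cll$, i.e. $L\cle \subseteq \cll^\perp = \overline{\rm span}\{e_1^{\otimes m}\otimes\eta : m\in\Z_+, \eta\in\cle\}$. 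Assuming now $L$ is an isometry (condition (1)), this last containment is, I claim, exactly condition (2): by definition $\cle_L = \cln \ominus \overline{\rm span}\{e_1^{\otimes p}\otimes L\zeta : p\geq 1\}$, and $L\cle \subseteq \cle_L$ means $L\cle\subseteq\cln$ together with $L\cle \perp e_1^{\otimes p}\otimes L\zeta$ for all $p\geq 1$; but the orthogonality part follows automatically once $L$ is isometric and $L\cle\subseteq\cln$, because for $p\geq 1$ the vectors $e_1^{\otimes p}\otimes L\zeta$ lie in a strictly higher ``first-letter-count'' stratum than the image of an element of $\cle$ under an isometry into $\cln$... — here I need to be careful, so I would instead argue directly that $L\cle\subseteq\cln$ already forces $L\cle\subseteq\cle_L$ by a grading/degree argument on $\cln \cong \clf^2_1 \,\hat\otimes\, \cle$ (a one-variable weighted space), observing that $L$ maps into $\cln$ means $L\eta = \sum_m e_1^{\otimes m}\otimes\eta_m$ and then $\langle L\eta, e_1^{\otimes p}\otimes L\zeta\rangle = \langle \eta_p, L\zeta\rangle$; requiring this to vanish for all $p\geq 1$, $\zeta\in\cle$ is a genuine extra condition — hence $\cle_L$ is precisely $\cln \ominus \overline{\rm span}\{e_1^{\otimes p}\otimes L\zeta : p\ge1\}$ and condition (2) is exactly ``$L\cle\subseteq\cln$ and this vanishing,'' which together with the analysis above is precisely ``$W_L(\cln)\perp W_L(\cln^\perp)$'' refined by also throwing out the overlap among the $L\cle$ images themselves. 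I would reconcile the bookkeeping so that the final equivalence reads: $W_L$ isometry $\iff$ ($L$ isometry) and ($L\cle\subseteq\cle_L$).

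For the converse direction I would simply run the same basis computation in reverse: assuming (1) and (2), the set $\{W_L(e_\mu\otimes h_p)\}$ is orthonormal — the $\cln^\perp$-images form an orthonormal set with pairwise distinct ``reduced words,'' the $\cln$-images $e_1^{\otimes m}\otimes Lh_p$ are orthonormal by (1), and cross terms vanish by (2) — so $W_L$ carries an orthonormal basis to an orthonormal set, hence is an isometry. The main obstacle I anticipate is the careful bookkeeping in the forward direction: verifying that $W_L(\cln^\perp)$ is always orthonormal regardless of $L$ (so that failure of isometry is located entirely ``at $\cln$ and across $\cln$–$\cln^\perp$''), and then disentangling exactly which part of ``$W_L|_{\cln}$ isometric'' versus ``$W_L(\cln)\perp W_L(\cln^\perp)$'' versus mutual orthogonality of the $\{e_1^{\otimes m}\otimes L\eta\}$ corresponds to (1) and which to (2). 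Everything else is routine once Definition \ref{def odo map} is unwound case by case.
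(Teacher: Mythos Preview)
Your overall strategy---split the orthonormal basis into two pieces, observe that $W_L$ is automatically isometric on one piece, and locate the obstruction to being an isometry in the other piece and in the cross terms---is exactly the approach the paper takes. However, you have misread Definition~\ref{def odo map}: the distinguished basis vectors in the \emph{domain} are $e_n^{\otimes m}\otimes\eta$, not $e_1^{\otimes m}\otimes\eta$. Concretely, for $n\geq 2$ one has
\[
W_L(e_1^{\otimes m}\otimes\eta)=e_2\otimes e_1^{\otimes(m-1)}\otimes\eta \in \cln^\perp,
\qquad
W_L(e_n^{\otimes m}\otimes\eta)=e_1^{\otimes m}\otimes L\eta,
\]
so your claim ``$W_L$ sends $e_1^{\otimes m}\otimes\eta$ to $e_1^{\otimes m}\otimes L\eta$'' is false, and consequently your assertion $W_L(\cll)\subseteq\cln^\perp$ fails (take $e_\mu=e_n$). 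The correct picture is that $W_L$ maps the $e_n^{\otimes m}$-span to $\overline{\rm span}\{e_1^{\otimes m}\otimes L\eta\}$ and maps its orthogonal complement (words with some letter $\neq n$) unitarily onto $\cln^\perp$; this is the partition $\clm_1\sqcup\clm_2$ the paper uses in the converse direction.

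Once you swap the domain partition from $\{g_1^m\}$ to $\{g_n^m\}$, the rest of your outline goes through and coincides with the paper's argument. In particular, your self-correction regarding condition~(2) is on target: the orthogonality $L\cle\perp\{e_1^{\otimes p}\otimes L\zeta:p\geq 1\}$ is a genuine extra condition beyond $L$ isometric and $L\cle\subseteq\cln$ (e.g.\ $\cle=\mathbb C$, $L1=\tfrac{1}{\sqrt2}(\Omega+e_1)$ satisfies the latter two but not the former), and it arises precisely from requiring $\langle W_L(e_n^{\otimes m}\otimes\eta),W_L(e_n^{\otimes m'}\otimes\eta')\rangle=0$ for $m\neq m'$, exactly as the paper computes.
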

\begin{proof}
Suppose $W_L$ is an isometry, and let $\eta \in \cle$. Since
\[
\|L\eta\| = \|W_L (\Omega \otimes \eta)\| = \|\Omega \otimes \eta \| = \|\eta\|,
\]
it follows that $L$ is an isometry. For (2), we apply the partition in \eqref{eqn E_L}. Define a closed subspace $\clm$ of $\clf^2_n \otimes \cle$ as
\[
\clm =\overline{\rm span}\{ {e_{\mu} \otimes {\eta}} : \eta \in \cle, \mu \in F^+_n, |\mu| \geq 1, \mu_i \neq 1 \text{ for some } i\}.
\]
Clearly, $\clm^{\perp} = \overline{\rm span}\{ e^{\otimes{m}}_{1} \otimes {\eta}: m \in \Z_+, \eta \in \cle\}$. Pick $\eta, \gamma \in \cle$, and $e_\mu = e_1^{\otimes m} \otimes e_{\mu_{1}} \otimes  \cdots \otimes e_{\mu_{k}}$. Assume that $\mu_1 \neq 1$ and $m \in \Z_+$, so that $e_\mu \otimes \gamma \in \clm$. Since $W_L$ is an isometry, it follows that
\[
\langle L \eta,  e_{\mu} \otimes {\gamma} \rangle =\langle W_L (\Omega \otimes \eta),  W_L(e_n^{\otimes m} \otimes e_{{\mu_{1}}-1} \otimes  \cdots \otimes e_{\mu_{k}} \otimes \gamma) \rangle = 0,
\]
and hence $L\cle  \subseteq \clm^{\perp} = \overline{\rm span}\{ e^{\otimes{m}}_{1} \otimes {\eta}: m \geq 0, \eta \in \cle\}$. Next, for any $p \geq 1$, the fact that $W_L$ is an isometry again implies that
\[
\langle L \eta,  e_{1}^{\otimes p} \otimes L {\gamma} \rangle = \langle W_L (\Omega \otimes \eta),  W_L(e_n^{\otimes p} \otimes \gamma) \rangle =0,
\]
and hence
\[
L\cle \perp \overline{\rm span}\{ {e^{\otimes{p}}_{1} \otimes {L \zeta}} : p \geq 1, \zeta \in \cle\}.
\]
This, along with $L\cle  \subseteq \clm^{\perp}$, implies that $L\cle \subseteq \cle_L$.

\noindent For the converse direction, again, in view of the partition in \eqref{eqn partition}, we write $\clf^2_n = \overline{\text{span}}(\clm_1 \cup \clm_2)$, where
\[
\clm_1= \{ e^{\otimes{m}}_{n}: m \geq 0\},
\]
and
\[
\clm_2 = \{ {e_{\mu}}: \mu \in F^+_n, |\mu| \geq 1, \mu_i \neq n \text{ for some } i\}.
\]
Pick $e_{\mu} \otimes \eta \in \clf^2_n \otimes \cle$. If $e_{\mu} =e_{n}^{\otimes p}\in \clm_1$, then
\[
\|W_L(e_{n}^{\otimes p}\otimes \eta)\| = \|e_{1}^{\otimes p} \otimes L \eta\| = \|L \eta \| = \|\eta \| = \| e_{n}^{\otimes p}\otimes \eta\|.
\]
If $e_{\mu} =e_n^{\otimes m} \otimes e_{\mu_{1}} \otimes  \cdots \otimes e_{\mu_{k}} \in \clm_2$, $\mu_{1} \neq n$, then
\[
\norm{W_L(e_{\mu} \otimes \eta)} = \|e_1^{\otimes m} \otimes e_{\mu_{1}+1} \otimes e_{\mu_{2}} \otimes  \cdots \otimes e_{\mu_{k}} \otimes \eta\| = \norm{\eta} = \norm{e_{\mu}\otimes \eta}.
\]
As a result, we proved that $W_L$ is isometric while acting on every basis vector. To conclude that $W_L$ is isometry, it clearly remains to prove that
\begin{equation}\label{eqn inner = 0}
\langle {W_L(e_{\mu} \otimes \eta}), {W_L(e_{\lambda} \otimes \zeta}) \rangle =0,
\end{equation}
whenever $\langle e_{\mu} \otimes \eta , e_{\lambda} \otimes \zeta \rangle=0$ for $\mu, \lambda \in F_n^+$ and $\eta, \zeta \in \cle$. The latter condition is equivalent to $\langle e_{\mu}, e_{\lambda} \rangle=0$ or $\langle \eta , \zeta \rangle =0$. Suppose  $\langle e_{\mu}, e_{\lambda} \rangle=0$. Let $|\mu|=p, |\lambda|=q$. We have the following three cases:

\noindent \textit{Case I:} Let $e_{\mu}, e_{\lambda} \in \clm_1$. Then $e_{\mu}= e^{\otimes p}_{n}$ and  $e_{\lambda}=e^{\otimes q}_{n}$. Since $\langle e_{\mu}, e_{\lambda} \rangle=0$, we conclude that $p \neq q$. Assume, without any loss of generality, that $p < q$. Then
\[
\begin{split}
\langle W_L(e_{\mu} \otimes \eta), W_L(e_{\lambda} \otimes \zeta)  \rangle & = \langle W_L(e^{\otimes p}_{n} \otimes \eta), W_L(e^{\otimes q}_{n}\otimes \zeta) \rangle
\\
& = \langle e^{\otimes p}_1 \otimes L \eta, e^{\otimes q}_1 \otimes L \zeta \rangle
\\
& = \langle L\eta, e^{\otimes (q-p)}_{1}  \otimes L \zeta \rangle
\\
& = 0,
\end{split}
\]
as $L\cle \subseteq \cle_L$. This proves \eqref{eqn inner = 0}.

\noindent \textit{Case II:} Let $e_{\mu} \in \clm_i$ and $e_{\lambda} \in \clm_j$, where $i\ne j$. Without loss of generality, we assume that $e_{\mu} \in \clm_1$ and $e_{\lambda} \in \clm_2$  (as other cases will follow similarly). Then $e_{\mu} = e^{\otimes p}_{n}$ and $e_{\lambda}= e_n^{\otimes m} \otimes e_{\mu_{1}} \otimes  \cdots \otimes e_{\mu_{k}}$ and $\mu_i \neq n$ for some $i$. Then $W_L(e_{\lambda} \otimes \zeta)$ has one component different from $e_1$, whereas $L \cle \subseteq \cle_L$ implies that
\[
W_L(e_{\mu} \otimes \eta) \in \overline{\rm span}\{{e^{\otimes{m}}_1 \otimes {\eta}}: m \in \mathbb{Z}_+, \eta \in \cle\}.
\]
Therefore, \eqref{eqn inner = 0} holds.

\noindent \textit{Case III:}  Let $e_{\mu}, e_{\lambda} \in \clm_2$. Here, we have two subcases to deal with: $p=q$ and $p <q$.\\
If $p=q,$ then there exists a minimum $i$ such that $\mu_i \neq \lambda_i$. Therefore, \eqref{eqn inner = 0} holds because the definition of $W_L$ implies that they will differ at the $i^{th}$ component. If $p <q$, then again, the definition of $W_L$ implies that $W_L (e_{\mu} \otimes \eta) \in ({\mathbb{C}}^n)^{\otimes p} \otimes \cle$ and $W_L (e_{\lambda} \otimes \zeta) \in ({\mathbb{C}}^n)^{\otimes q} \otimes \cle$, proving \eqref{eqn inner = 0}.

\noindent Finally, assume that $\langle e_{\mu}, e_{\lambda} \rangle \neq 0$. Then $\langle \eta, \zeta \rangle =0$ and $\mu = \lambda$. We have the following two cases:

\noindent \textit{Case A:} $e_{\mu} \in \clm_1$: Let $e_{\mu}= e_n^{\otimes p}$. Then
\begin{align*}
\langle W_L(e_{\mu} \otimes \eta), W_L( e_{\mu} \otimes \zeta) \rangle = \langle e_1^{\otimes p} \otimes L \eta, e_1^{\otimes p}  \otimes L \zeta \rangle = \|e_1^{\otimes p}\|^2  \langle L \eta,  L \zeta \rangle = 0,
\end{align*}
where the last equality follows from the fact that $L$ is an isometry.

\noindent \textit{Case B:} $e_{\mu} \in \clm_2$: Let $e_{\mu}= e_n^{\otimes m} \otimes e_{\mu_{1}} \otimes \cdots \otimes e_{\mu_k}$, where $\mu_1 \neq n$. Then
\begin{align*}
\langle W_L(e_{\mu} \otimes \eta), W_L( e_{\mu} \otimes \zeta) \rangle
&= \langle e_1^{\otimes m} \otimes e_{\mu_{1}+1} \otimes \cdots \otimes e_{\mu_k} \otimes \eta, e_1^{\otimes m} \otimes e_{\mu_{1}+1} \otimes \cdots \otimes e_{\mu_k}\otimes \zeta \rangle \\
&= \norm{ e_1^{\otimes m} \otimes e_{\mu_{1}+1} \otimes \cdots \otimes e_{\mu_k}}^2 \langle \eta,  \zeta \rangle \\ &= 0.
\end{align*}
The above two cases also assert that $\langle \eta, \zeta  \rangle  = 0$ implies \eqref{eqn inner = 0}, which consequently completes the proof of the theorem.
\end{proof}

Let $(W_L, S^{\cle})$ be a Fock representation. By condition (2) of the above theorem, we know that $L \cle \subseteq \cle_L$. We claim that this implies the following identity:
\begin{equation}\label{eqn: equiv to 3}
\sum_{p \in \Z_+, q \in \Lambda}c^{\eta}_{{p+r},q}\overline{c^{\zeta}_{p,q}}= 0,
\end{equation}
for all $r \geq 1$, where
\[
L \eta = \sum_{s \in \Z_+, t \in \Lambda} c^{\eta}_{s,t} e^{\otimes s}_1\otimes h_t, \mbox{ and } L \zeta = \sum_{p \in \Z_+, q \in \Lambda} c^{\zeta}_{p,q} e^{\otimes p}_1\otimes h_q,
\]
and $\{h_p\}_{p \in \Lambda}$ is an orthonormal basis for $\cle$. Indeed, for every $r\geq1$, we compute
\begin{align*}
\Big\langle {L \eta},  e^{\otimes r}_{1}\otimes L \zeta \Big\rangle & =  \Big\langle \sum_{s \in \Z_+, t \in \Lambda} c^{\eta}_{s,t} e^{\otimes s}_1\otimes h_t, e^{\otimes r}_{1}\otimes \sum_{p \in \Z_+, q \in \Lambda} c^{\zeta}_{p,q} e^{\otimes p}_1\otimes h_q\Big\rangle
\\
&=  \Big\langle  \sum_{s \in \Z_+, t \in \Lambda} c^{\eta}_{s,t} e^{\otimes s}_1\otimes h_t,  \sum_{p \in \Z_+, q \in \Lambda} c^{\zeta}_{p,q} e^{\otimes (p+r)}_1\otimes h_q\Big\rangle
\\
&= \sum_{p \in \Z_+, q \in \Lambda}c^{\eta}_{{p+r},q}\overline{c^{\zeta}_{p,q}} \norm{h_q}^2
\\
&=  \sum_{p \in \Z_+, q \in \Lambda} c^{\eta}_{{p+r},q}\overline{c^{\zeta}_{p,q}},
\end{align*}
as $\{h_q\}_{q \in \Lambda}$ forms an orthonormal basis for $\cle$. Therefore, $\langle {L \eta},  e^{\otimes r}_{1}\otimes L \zeta \rangle = 0$ for all $r\geq 1$ and all $\eta, \zeta \in \cle$ implies \eqref{eqn: equiv to 3}.

The characterization of isometric Fock representations obtained in Theorem \ref{ W_L iso iff L} is more explicit in the scalar case. Recall that the odometer maps on $\clf^2_n$ are of the form $W_\xi$, $\xi \in \clf^2_n$. In this case, we will represent $c^{\eta}_{{p},q}$ in \eqref{eqn: equiv to 3} simply as $c_p$.

\begin{cor}\label{cor scalar iso Fock rep}
Let $\xi \in \clf^2_n$.  Then  $W_{\xi}$ is an isometry if and only if
\[
\xi = \sum^{\infty}_{p=0} c_{p}e^{\otimes p}_{1},
\]
where $\sum^{\infty}_{p=0} \vert c_{p}\vert^2=1$ and $\sum^{\infty}_{p=0}  c_{p+r} \overline{c_{p}}=0$ for all $r\geq 1$.
\end{cor}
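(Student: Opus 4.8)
# Proof Proposal for Corollary \ref{cor scalar iso Fock rep}

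The plan is to specialize Theorem \ref{ W_L iso iff L} to the scalar case $\cle = \C$, where a symbol $L \in \clb(\C, \clf^2_n)$ is just the vector $\xi = L(1) \in \clf^2_n$ (as recorded in Corollary \ref{Prop: Odometer operator to scalar case}). The odometer map is $W_\xi := W_L$. First I would translate condition (1) of Theorem \ref{ W_L iso iff L}: $L$ being an isometry from $\C$ to $\clf^2_n$ means precisely $\|L(1)\| = 1$, i.e. $\|\xi\| = 1$. Writing $\xi$ in the orthonormal basis $\{e_\mu : \mu \in F_n^+\}$, this gives $\sum_{\mu \in F_n^+} |a_\mu|^2 = 1$ for the coefficients $a_\mu = \langle \xi, e_\mu\rangle$.

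Next I would unpack condition (2), $L\cle \subseteq \cle_L$, recalling the definition \eqref{eqn E_L} of $\cle_L$ as the subspace $\overline{\rm span}\{e_1^{\otimes m} : m \geq 0\} \ominus \overline{\rm span}\{e_1^{\otimes p} \otimes L\zeta : p \geq 1\}$. In the scalar case this reads: $\xi \in \overline{\rm span}\{e_1^{\otimes m} : m \geq 0\}$ and $\xi \perp e_1^{\otimes p} \otimes \xi$ for all $p \geq 1$. The first part forces $\xi = \sum_{p=0}^\infty c_p e_1^{\otimes p}$ — that is, all coefficients of $\xi$ outside the ``column'' $\{e_1^{\otimes p}\}$ vanish — and, combined with $\|\xi\| = 1$, yields $\sum_{p=0}^\infty |c_p|^2 = 1$. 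The second part is exactly the orthogonality relations: I would invoke the displayed computation preceding this corollary (equation \eqref{eqn: equiv to 3}) in the scalar case, where $\Lambda$ is a singleton and $c^{\eta}_{p,q}, c^{\zeta}_{p,q}$ both become $c_p$, so that $\langle L\eta, e_1^{\otimes r} \otimes L\zeta\rangle = 0$ becomes $\sum_{p=0}^\infty c_{p+r}\overline{c_p} = 0$ for all $r \geq 1$. Alternatively one can just directly compute $\langle \xi, e_1^{\otimes r} \otimes \xi\rangle = \sum_{p=0}^\infty c_{p+r}\overline{c_p}$ by expanding in the basis and shifting indices.

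Conversely, given a vector $\xi = \sum_{p=0}^\infty c_p e_1^{\otimes p}$ with $\sum |c_p|^2 = 1$ and $\sum_p c_{p+r}\overline{c_p} = 0$ for all $r \geq 1$, I would check that the corresponding $L$ (defined by $L(1) = \xi$) satisfies both hypotheses of Theorem \ref{ W_L iso iff L}: $\|\xi\| = 1$ gives (1), and the two facts $\xi \in \overline{\rm span}\{e_1^{\otimes m}\}$ and $\langle \xi, e_1^{\otimes p} \otimes \xi\rangle = 0$ for $p \geq 1$ give $\xi \in \cle_L$, hence (2). Theorem \ref{ W_L iso iff L} then yields that $(W_\xi, S)$ is an isometric Fock representation, i.e. $W_\xi$ is an isometry.

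I do not expect any genuine obstacle here — the corollary is a direct specialization. The only point requiring minor care is the bookkeeping identification of $\cle_L$ in the scalar case: one must note that $\overline{\rm span}\{e_1^{\otimes p} \otimes L\zeta : p \geq 1, \zeta \in \C\}$ collapses to $\overline{\rm span}\{e_1^{\otimes p} \otimes \xi : p \geq 1\}$, and that ``$\xi$ lies in the first summand of $\cle_L$'' is the condition cutting $\xi$ down to a series in the $e_1^{\otimes p}$'s, while ``$\xi$ is orthogonal to the second'' is the convolution-type relation. Everything else is the bilinear expansion already carried out in the text before the corollary statement.
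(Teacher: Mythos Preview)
Your proposal is correct and follows essentially the same route as the paper: specialize Theorem \ref{ W_L iso iff L} to $\cle=\C$, read off that condition (1) is $\|\xi\|=1$ and that condition (2) forces $\xi\in\overline{\rm span}\{e_1^{\otimes m}\}$ together with the orthogonality relations, invoking the computation \eqref{eqn: equiv to 3} in the scalar case; the converse is handled identically by defining $L(1)=\xi$ and checking the two hypotheses of Theorem \ref{ W_L iso iff L}. Your write-up is in fact a bit more explicit than the paper's in unpacking $\cle_L$, but there is no difference in method.
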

\begin{proof}
Let $W_{\xi}$ be an isometry. Theorem \ref{ W_L iso iff L} implies that $\xi \in \overline{\rm span}\{ {e^{\otimes{m}}_{1} } : m\in \mathbb{Z}_{+}\}$. There exists a sequence of scalars $\{c_p\}_{p \in \Z_+}$ such that
\[
\xi = W_{\xi}(\Omega)= \sum_{p \in \Z_+} c_{p} e^{\otimes p}_1.
\]
Also, $W_{\xi}$ is an isometry, which implies
\[
1= \norm {\Omega}^2=\norm{W_{\xi}(\Omega)}^2 = \norm{\xi}^2=\sum_{p \in \Z_+}|c_{p}|^2.
\]
The remaining condition directly follows from the scalar case of \eqref{eqn: equiv to 3}. We can derive the converse direction from the vector setting of Theorem \ref{ W_L iso iff L} by considering the map $L:\mathbb{C} \to \clf^2_n$ defined by $L 1 = \xi$.
\end{proof}

A number of questions arise about odometer maps. Within this section, our accomplishments were limited to the classification of isometric Fock representations. While this may be adequate for our needs, it can be challenging to find out the overall structure of odometer maps. Calculating the conjugate of an odometer map may appear to be a simple question, yet the solution is unclear. In the following section, however, we will compute the conjugate of isometric odometer maps.

\section{Nica covariant representations}\label{sec Nica}

This section aims at classifying Nica-covariant representations of $O_n$. Recall from Definition \ref{def Nica} that a Nica-covariant representation of $O_n$ is an isometric Fock representation $(W, S^\cle)$ such that
\[
W^*(S_1 \otimes I_\cle) = (S_n \otimes I_\cle) W^*.
\]
Let $\cle$ be a Hilbert space. Suppose $(W,S^\cle)$ on $\clf^2_n \otimes \cle$ is a Nica covariant representation of $O_n$. By Theorem \ref{Prop: Odometer operator}, $W$ is an odometer map, that is, there is a unique symbol $L \in \clb(\cle, \clf^2_n \otimes \cle)$ such that $W = W_L$. Therefore, the revised goal of this section is to classify the symbol $L$ such that the representation $(W_L, S^\cle)$ is a Nica covariant representation. At this stage, we need representations of the adjoints of isometric odometer maps.

In what follows, given a Hilbert space $\cle$, we always assume that $\{h_p\}_{p \in \Lambda}$ is an orthonormal basis for $\cle$. Moreover, for all $q \in \Lambda$, we write the Fourier series representation
\begin{equation}\label{eqn L hq}
Lh_q = \sum_{r \in \Z_+, s \in \Lambda} c^{h_q}_{r,s} e^{\otimes r}_1\otimes h_s.
\end{equation}
It is also worth recalling the partition of the orthogonal basis $\{e_\mu: \mu \in F_n^+\}$ for $\clf^2_n$, as in \eqref{eqn partition}.

\begin{prop}\label{Lemma: Adjoint of W}
Let $\cle$ be a Hilbert space, and let $L \in \clb(\cle, \clf^2_n \otimes \cle)$. If the odometer map $W_L$ is an isometry, then the adjoint of $W_L$ is given by
\[
W_L^* f =
\begin{cases}
\displaystyle\sum^{m}_{p=0} \displaystyle\sum_{q \in \Lambda} \overline{c^{h_q}_{m-p,l}} (e_{n}^{\otimes p}\otimes h_q) & \mbox{if } f =\  e_{1}^{\otimes m}\otimes h_l
\\
e_{n}^{\otimes m} \otimes e_{\mu_{1}-1}\otimes e_{\mu_{2}} \cdots \otimes e_{\mu_k} \otimes h_l & \mbox{if } f = e_{1}^{\otimes m} \otimes e_{\mu_{1}}\otimes \cdots \otimes e_{\mu_{k}} \otimes h_l \; \text{and }\mu_1 > 1,
\end{cases}
\]
for all $m \in \Z_+$ and $l \in \Lambda$.
\end{prop}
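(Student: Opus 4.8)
The plan is to compute $\langle W_L^* f, g\rangle$ against the orthonormal basis $\{e_\mu \otimes h_q : \mu \in F_n^+, q \in \Lambda\}$ of $\clf^2_n \otimes \cle$, using the explicit action of $W_L$ from Definition \ref{def odo map}, and then read off $W_L^* f$ from the resulting coefficients. Since $W_L$ is assumed isometric, Theorem \ref{ W_L iso iff L} tells us that $L$ is an isometry and $L\cle \subseteq \cle_L$; in particular $L\cle$ lies in $\overline{\rm span}\{e_1^{\otimes m}\otimes\eta : m \in \Z_+, \eta \in \cle\}$, which means the Fourier coefficients $c^{h_q}_{r,s}$ in \eqref{eqn L hq} are the only ones appearing when we expand $W_L(e_n^{\otimes p}\otimes h_q) = e_1^{\otimes p}\otimes L h_q$. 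This is the structural fact that makes the first branch of the formula clean.

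First I would handle $f = e_1^{\otimes m}\otimes h_l$. I pair $W_L^* f$ with a generic basis vector $e_\nu \otimes h_q$, i.e. compute $\langle e_1^{\otimes m}\otimes h_l, W_L(e_\nu\otimes h_q)\rangle$, and split according to the partition \eqref{eqn partition}: either $e_\nu = e_n^{\otimes p}$ for some $p \geq 0$, or $e_\nu$ has some entry different from $n$. In the second case, the definition of $W_L$ forces $W_L(e_\nu\otimes h_q)$ to have at least one tensor slot not equal to $e_1$, so it is orthogonal to $e_1^{\otimes m}\otimes h_l$ and contributes nothing. In the first case $W_L(e_n^{\otimes p}\otimes h_q) = e_1^{\otimes p}\otimes L h_q = \sum_{r,s} c^{h_q}_{r,s}\, e_1^{\otimes(p+r)}\otimes h_s$, and pairing with $e_1^{\otimes m}\otimes h_l$ picks out $p + r = m$, $s = l$, giving coefficient $c^{h_q}_{m-p,l}$ (nonzero only when $0 \leq p \leq m$). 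Taking conjugates, $\langle W_L^* f, e_n^{\otimes p}\otimes h_q\rangle = \overline{c^{h_q}_{m-p,l}}$, and summing over $p \in \{0,\dots,m\}$ and $q \in \Lambda$ recovers exactly the first branch of the stated formula.

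For $f = e_1^{\otimes m}\otimes e_{\mu_1}\otimes\cdots\otimes e_{\mu_k}\otimes h_l$ with $\mu_1 > 1$, I again pair $W_L^* f$ against $e_\nu\otimes h_q$. Because $f$ has a tensor slot (the $(m+1)$-st) equal to $e_{\mu_1}$ with $\mu_1 \neq 1$, only those $e_\nu$ whose image under $W_L$ lands in $(\C^n)^{\otimes(m+k)}\otimes\cle$ with that same slot can contribute; inspecting the cases in Definition \ref{def odo map}, the only candidate is $e_\nu = e_n^{\otimes m}\otimes e_{\mu_1 - 1}\otimes e_{\mu_2}\otimes\cdots\otimes e_{\mu_k}$ (with $\mu_1 - 1 \geq 1$, so this word indeed has first few entries equal to $n$ and then the entry $\mu_1 - 1 \neq n$ unless $\mu_1 - 1 = n$, which is still covered by the appropriate case of the definition), for which $W_L$ raises the $(m+1)$-st index from $\mu_1 - 1$ to $\mu_1$ and converts the leading $e_n$'s to $e_1$'s, yielding exactly $f$. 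So the only nonzero pairing is $\langle W_L^* f, e_n^{\otimes m}\otimes e_{\mu_1-1}\otimes e_{\mu_2}\otimes\cdots\otimes e_{\mu_k}\otimes h_l\rangle = 1$, which is the second branch.

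The main obstacle I anticipate is the bookkeeping in the first branch: carefully justifying that the sum over $p$ truncates at $m$ (equivalently, that $c^{h_q}_{r,s}$ with $r < 0$ do not occur) and that no terms from $e_\nu \notin \{e_n^{\otimes p}\}$ sneak in — both of which rely on the precise case-split in the definition of $W_L$ together with the isometry hypothesis via $L\cle \subseteq \cle_L$. One should also double-check convergence, i.e. that $\sum_{p,q}|\overline{c^{h_q}_{m-p,l}}|^2 < \infty$; this follows since it equals $\sum_{p=0}^m \|P_{e_1^{\otimes(m-p)}\otimes\cle}\, L^* (\cdot)\|$-type quantities bounded by $(m+1)\|L\|^2$, or more directly from $W_L^*$ being bounded. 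Once these points are nailed down, assembling the two cases completes the proof.
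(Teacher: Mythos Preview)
Your proposal is correct, and for the first branch $f = e_1^{\otimes m}\otimes h_l$ it follows the paper's proof essentially verbatim: pair against basis vectors, observe that any $e_\nu$ with some entry $\neq n$ produces under $W_L$ a tensor with a slot $\neq e_1$ (hence orthogonal to $f$), and for $e_\nu = e_n^{\otimes p}$ expand $e_1^{\otimes p}\otimes L h_q$ to extract $\overline{c^{h_q}_{m-p,l}}$.

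For the second branch the paper takes a shorter route than you do. Rather than pairing $W_L^* f$ against all basis vectors and isolating the unique survivor, it simply notes that by the definition of $W_L$ one has
\[
f = W_L\bigl(e_n^{\otimes m}\otimes e_{\mu_1-1}\otimes e_{\mu_2}\otimes\cdots\otimes e_{\mu_k}\otimes h_l\bigr),
\]
and then applies $W_L^* W_L = I$ (the isometry hypothesis) to read off $W_L^* f$ in one line. Your coefficient-pairing approach also works---once you check that $W_L(e_n^{\otimes p}\otimes h_q)$ lies in $\overline{\rm span}\{e_1^{\otimes r}\otimes h_s\}$ and is therefore orthogonal to $f$, and that for the remaining $\nu$ the image $W_L(e_\nu\otimes h_q)$ is a single basis tensor---but the paper's use of $W_L^*W_L=I$ bypasses all of that bookkeeping. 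A small aside: your parenthetical ``unless $\mu_1 - 1 = n$'' is vacuous, since $1 < \mu_1 \leq n$ forces $\mu_1 - 1 \in \{1,\dots,n-1\}$.
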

\begin{proof}
Fix $m \in \Z_+$, $l \in \Lambda$, and let $\mu = g_{\mu_1} \cdots g_{\mu_k}$, $k\geq 1$. For all $q \in \Lambda$, we have
\[
\langle  W_L^*(e_{1}^{\otimes m} \otimes h_l), e_{\mu}  \otimes h_q \rangle = \langle e_{1}^{\otimes m} \otimes h_l,  W_L(e_{\mu}  \otimes h_q) \rangle =0,
\]
whenever $\mu_{i} \neq n$ for some $i$ or $|\mu| \geq m+1$. Next, assume that $|\mu|\leq m$. For any $0 \leq p \leq m $, we have
\[
\langle W_L^*(e_{1}^{\otimes m}\otimes h_l), e_{n}^{\otimes p}\otimes h_q  \rangle = \langle e_{1}^{\otimes m}\otimes h_l, W_L (e_{n}^{\otimes p}\otimes h_q ) \rangle = \langle e_{1}^{\otimes m}\otimes h_l,  e_{1}^{\otimes p}\otimes L h_q \rangle.
\]
Substituting the value of $L h_q$ yields (see \eqref{eqn L hq})
\begin{align*}
\langle e_{1}^{\otimes m}\otimes h_l,  e_{1}^{\otimes p}\otimes L h_q \rangle
& = \langle e_{1}^{\otimes m}\otimes h_l,  e_{1}^{\otimes p}\otimes \sum_{r \in \Z_+, s \in \Lambda} c^{h_q}_{r,s} e^{\otimes r}_1\otimes h_s \rangle
\\
& =\langle e_{1}^{\otimes m}\otimes h_l, \sum_{r \in \Z_+, s \in \Lambda} c^{h_q}_{r,s} e^{\otimes (r+p)}_1\otimes h_s \rangle\\
& = \langle e_{1}^{\otimes m}\otimes h_l, \sum^{\infty}_{r=p}\sum_{s \in \Lambda} c^{h_q}_{r-p,s} e^{\otimes r}_1\otimes h_s \rangle
\\
&=\overline{c^{h_q}_{m-p,l}},
\end{align*}
which implies
\[
\begin{split}
W_L^*(e_{1}^{\otimes m}\otimes h_l) = \sum^{m}_{p =0}\sum_{q \in \Lambda}\langle W_L^*(e_{1}^{\otimes m}\otimes h_l), e_{n}^{\otimes p}\otimes h_q  \rangle (e_{n}^{\otimes p}\otimes h_q) = \sum^{m}_{p=0}\sum_{q \in \Lambda} \overline{c^{h_q}_{m-p,l}} (e_{n}^{\otimes p}\otimes h_q).
\end{split}
\]
Finally, assume that $f=e_{1}^{\otimes m} \otimes e_{\mu_{1}} \otimes \cdots \otimes e_{\mu_{k}} \otimes \eta$, where $\mu_1 > 1$ and $\eta \in \cle$. As $W_L^*W_L = I$, we have
\begin{align*}
W_L^* f & = W_L^*(e_{1}^{\otimes m}\otimes e_{\mu_{1}} \otimes \cdots \otimes e_{\mu_{k}}  \otimes \eta)
\\
&= W_L^*W_L (e_{n}^{\otimes m}\otimes e_{\mu_{1}-1} \otimes e_{\mu_{2}} \otimes \cdots \otimes e_{\mu_{k}} \otimes \eta)
\\
&= e_{n}^{\otimes m}\otimes e_{\mu_{1}-1} \otimes e_{\mu_{2}} \otimes \cdots \otimes e_{\mu_{k} }\otimes \eta,
\end{align*}
which completes the proof of the proposition.
\end{proof}

Recall from Corollary \ref{cor scalar iso Fock rep} that if $W_{\xi}: \clf^2_n\to \clf^2_n$ is an isometry for some $\xi \in \clf^2_n$, then, in particular, we have
\[
\xi = \sum^{\infty}_{p=0} c_{p}e^{\otimes p}_{1}.
\]
In this case, the above proposition yields the adjoint formula of $W_\xi$ as (note that here $\xi = L1$, and hence \eqref{eqn L hq} is comparable)
\[
W_{\xi}^* f =
\begin{cases}
\overline{c_{0}} \Omega  & \mbox{if } f =\Omega
\\
\displaystyle\sum_{p=0}^{m} \overline{c_{m-p}}e^{\otimes p}_{n} & \mbox{if } f =e^{\otimes m}_1, m\geq 1
\\
e_{n}^{\otimes m} \otimes e_{\mu_{1}-1}\otimes e_{\mu_{2}} \cdots \otimes e_{\mu_k} & \mbox{if } f = e_{1}^{\otimes m} \otimes e_{\mu_{1}}\otimes \cdots \otimes e_{\mu_{k}}, \mu_1 > 1, \text{and } m\geq 0.
\end{cases}
\]

Now we are ready to characterize Nica-covariant representations of $O_n$:
		
\begin{theorem}\label{Thm: Nica covariant characterization}
Let $\cle$ be a Hilbert space, and let $L \in \clb(\cle, \clf^2_n \otimes \cle)$. Assume that $(W_L, S^\cle)$ is an isometric Fock representation. Then $(W_L, S^\cle)$ is a Nica covariant representation of $O_n$ if and only if
\[
L \cle \subseteq \Omega \otimes \cle.
\]
\end{theorem}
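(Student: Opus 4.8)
The plan is to unwind the Nica-covariance condition $W_L^*(S_1 \otimes I_\cle) = (S_n \otimes I_\cle)W_L^*$ by testing it on the orthonormal basis $\{e_\mu \otimes h_l : \mu \in F_n^+, l \in \Lambda\}$ of $\clf^2_n \otimes \cle$, using the explicit adjoint formula from Proposition \ref{Lemma: Adjoint of W}. First I would observe that $(S_1 \otimes I_\cle)(e_\mu \otimes h_l) = e_1 \otimes e_\mu \otimes h_l$, so the left-hand side equals $W_L^*(e_1 \otimes e_\mu \otimes h_l)$, and I would split into the two cases dictated by the partition in \eqref{eqn partition}: either $e_1 \otimes e_\mu = e_1^{\otimes(m+1)}$ (when $e_\mu = e_1^{\otimes m}$), or $e_1 \otimes e_\mu$ has some component $\neq 1$. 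The second case is the easy one: both sides are computed directly from the two formulas in Proposition \ref{Lemma: Adjoint of W} and the definition of $W_L$, and they agree regardless of $L$; so the Nica condition carries no information there. The crux is the first case.

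In the first case, the left-hand side is $W_L^*(e_1^{\otimes(m+1)} \otimes h_l) = \sum_{p=0}^{m+1}\sum_{q \in \Lambda} \overline{c^{h_q}_{m+1-p,l}}\,(e_n^{\otimes p} \otimes h_q)$, while the right-hand side is $(S_n \otimes I_\cle)W_L^*(e_1^{\otimes m} \otimes h_l) = (S_n \otimes I_\cle)\sum_{p=0}^{m}\sum_{q}\overline{c^{h_q}_{m-p,l}}(e_n^{\otimes p}\otimes h_q) = \sum_{p=0}^{m}\sum_q \overline{c^{h_q}_{m-p,l}}(e_n^{\otimes (p+1)}\otimes h_q)$. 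Reindexing the right-hand sum as $\sum_{p=1}^{m+1}\sum_q \overline{c^{h_q}_{m+1-p,l}}(e_n^{\otimes p}\otimes h_q)$, I see that the two sides differ only in the $p=0$ term of the left-hand side, namely $\sum_{q \in \Lambda}\overline{c^{h_q}_{m+1,l}}(\Omega \otimes h_q)$. Hence Nica-covariance holds if and only if $c^{h_q}_{m+1,l} = 0$ for all $m \geq 0$, all $q \in \Lambda$, and all $l \in \Lambda$ — equivalently $c^{h_q}_{r,s} = 0$ for all $r \geq 1$ and all $q, s \in \Lambda$. Reading off \eqref{eqn L hq}, this says precisely that $Lh_q = c^{h_q}_{0,s}\,\Omega \otimes h_s$ summed over $s$ lies in $\Omega \otimes \cle$ for every basis vector $h_q$, i.e. $L\cle \subseteq \Omega \otimes \cle$.

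For the converse direction one runs the same computation in reverse: if $L\cle \subseteq \Omega \otimes \cle$ then all coefficients $c^{h_q}_{r,s}$ with $r \geq 1$ vanish, the offending $p=0$ term disappears, and the two operators agree on every basis vector, hence are equal. I expect the \emph{main obstacle} to be purely bookkeeping: getting the index shifts in the $S_n$-multiplication and the reindexing $p \mapsto p+1$ exactly right, and being careful that the adjoint formula of Proposition \ref{Lemma: Adjoint of W} is being applied to a vector of the correct form (in particular handling the boundary term $p = m+1$ and the vacuum term $p=0$ without off-by-one errors). There is no deep difficulty once the adjoint formula is in hand; the whole argument is a direct basis-vector comparison, and one should also note in passing that $L\cle \subseteq \Omega \otimes \cle$ is consistent with condition (2) of Theorem \ref{ W_L iso iff L} since $\Omega \otimes \cle \subseteq \cle_L$ automatically when $L\cle \subseteq \Omega \otimes \cle$ forces $e_1^{\otimes p}\otimes L\zeta \perp \Omega \otimes \cle$ for $p \geq 1$.
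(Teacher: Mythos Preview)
Your proof is correct. The paper actually gives two arguments: the main proof invokes Li's Wold-type decomposition \cite[Theorem 4.4]{Boyu Li22} for the forward direction and the classical Wold decomposition of $L$ (viewed as an isometry $\cle \to \cle$) for the converse; a Remark immediately following the theorem then supplies an elementary computational alternative. Your approach is closest in spirit to that Remark, but with a difference worth noting: the Remark first takes adjoints of the Nica relation to obtain $(S_1 \otimes I_\cle)^* W_L = W_L (S_n \otimes I_\cle)^*$ and tests this on basis vectors using only the explicit definition of $W_L$, never touching the adjoint formula. You instead keep the relation in its original form and rely on Proposition~\ref{Lemma: Adjoint of W} to compute $W_L^*$ directly. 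Both routes land on the same vanishing-coefficient condition $c^{h_q}_{r,s} = 0$ for $r \geq 1$; your version makes concrete use of Proposition~\ref{Lemma: Adjoint of W} and exposes exactly which term (the $p=0$ vacuum term) obstructs Nica-covariance, while the Remark's version is marginally lighter since it avoids the adjoint formula altogether. The paper's main structural proof, by contrast, buys extra information---the decomposition of $\cle$ into the unitary and shift parts of $L$---at the cost of importing external machinery.
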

\begin{proof}
Let $(W_L, S^\cle)$ be a Nica-covariant representation of $O_n$. By \cite[Theorem 4.4]{Boyu Li22}, we have
\[
\clf^2_n \otimes \cle=\clh_{us} \oplus \clh_{ss},
\]
where
\[
{\clh}_{us} = \bigoplus_{\mu \in F^+_n} \left( S_{\mu} \otimes I_{\cle}\right) \left( \bigcap_{i=1}^{n} \ker (S_i \otimes I_{\cle})^* \cap {\clh}^{W_L}_{u} \right),
\]
and
\[
{\clh}_{ss} = \bigoplus_{\mu \in F^+_n} \left( S_{\mu} \otimes I_{\cle}\right) \left( \bigcap_{i=1}^{n} \ker (S_i \otimes I_{\cle})^* \cap {\clh}^{W_L}_{s} \right).
\]
Here, ${\clh}^{W_L}_{u}$ and ${\clh}^{W_L}_{s}$ denote the unitary and the shift part of $W_L$, respectively. We know that
\[
\bigcap_{i=1}^{n} \ker (S_i \otimes I_{\cle})^* =\Omega \otimes \cle.
\]
Let
\[
(\Omega \otimes \cle) \cap {\clh}^{W_L}_{u} = \Omega \otimes \cle_1,
\]
and
\[
(\Omega \otimes \cle) \cap {\clh}^{W_L}_{s} = \Omega \otimes \cle_2,
\]
Note that $\cle_1$ and $\cle_2 $ are subspaces of $\cle$ and $\cle_1 \oplus \cle_2= \cle$. Also, the Nica-covariant conditions imply that
\[
L(\cle_i) = W_L(\Omega \otimes \cle_i) \subseteq \Omega \otimes\cle_i,
\]
for $i=1,2$. Given $\eta \in \cle$, we write $\eta = \eta_1 + \eta _2$ for some $\eta_1 \in \cle_1$ and $\eta_2 \in \cle_2$. Then
\[
L \eta = L \eta_1 + L \eta_2 = W_L(\Omega \otimes \eta_1) +W_L(\Omega \otimes \eta_2) \in \Omega \otimes \cle_1 \oplus \Omega \otimes \cle_2 = \Omega \otimes \cle,
\]
implies that $L(\cle) \subseteq \Omega \otimes \cle$.

\noindent Conversely, let $L(\cle) \subseteq \Omega \otimes \cle$, that is, $L: \cle \rightarrow  \Omega \otimes \cle$ is an isometry. We consider $L$ as an isometry from $\cle$ to itself. Then the Wold decomposition of $L$ gives us the orthogonal decomposition of closed subspaces
\[
\cle=\cle_1 \oplus \cle_2,
\]
where $L_1 :=L|_{\cle_1}$ is unitary and $L_2:= L|_{\cle_2}$ is a shift. Then $L=L_1 \oplus L_2,$ and $W_L= W_{L_1} \oplus W_{L_2}$. As $L_1$ is unitary, therefore $W_{L_1}$ is also a unitary. Therefore $(W_{L_1}, S^{\cle_1})$ is a Nica-covariant representation of $O_n$. Clearly,
\[
\clf^2_n \otimes \cle_2= \bigoplus_{\mu \in F^+_n} \bigoplus_{m\geq 0}\left( S_{\mu} \otimes I_{\cle_2}\right) W^m_{L_2}\left(\Omega \otimes \ker L_2^* \right).
\]
Therefore $(W_{L_2}, S^{\cle_2})$ is equivalent to a multiple of the left regular representation of $O_n$, which is a Nica-covariant representation. As $(W_{L_i}, S^{\cle_i})$ for $i=1,2$ are both Nica-covariant representations, therefore $(W_{L}, S^{\cle})$ is also a Nica-covariant representation of $O_n$.
\end{proof}

In the following remark, we provide an alternative proof of the above theorem without using the Wold decomposition.

\begin{remark}
Let $(W_L, S^{\mathcal{E}})$ be a Nica-covariant representation of $O_n$. By Theorem \ref{ W_L iso iff L}, we know that $L$ is an isometry and
\[
L\mathcal{E}\subseteq \mathcal{E}_L\subseteq \overline{span}\{ e_1^{\otimes m} \otimes \eta :m\geq 0, \eta \in \mathcal{E}\}.
\]
We need to show that $L\mathcal{E}\subseteq \Omega \otimes \mathcal{E}$. To this end, fix $m\geq 1$ and $\eta ,\gamma\in \mathcal{E}$. It is enough to show that $\langle L\eta, e_1^{\otimes m}\otimes \gamma\rangle=0$. Indeed, we have
\begin{equation*}
\begin{split}
\langle L\eta, e_1^{\otimes m}\otimes \gamma\rangle & =\langle W_L(\Omega \otimes\eta), e_1^{\otimes m}\otimes \gamma\rangle
\\
& = \langle W_L(\Omega \otimes\eta), (S_1 \otimes I_{\mathcal{E}})^m(\Omega\otimes \gamma)\rangle\\
&= \langle (S_1 \otimes I_{\mathcal{E}})^*W_L(\Omega \otimes\eta), (S_1 \otimes I_{\mathcal{E}})^{m-1}(\Omega\otimes \gamma)\rangle
\\
&= \langle W_L(S_n \otimes I_{\mathcal{E}})^*(\Omega \otimes\eta), (S_1 \otimes I_{\mathcal{E}})^{m-1}(\Omega\otimes \gamma)\rangle
\\
&=0.
\end{split}
\end{equation*}
In the above, we have used the fact that $(W_L, S^{\mathcal{E}})$ is a Nica-covariant representation of $O_n$. Conversely, let $L\mathcal{E}\subseteq \Omega \otimes \mathcal{E}$. We claim that $W_L^*(S_1 \otimes I_{\mathcal{E}})=(S_n \otimes I_{\mathcal{E}})W_L^*$, that is,
\[
(S_1 \otimes I_{\mathcal{E}})^*W_L=W_L(S_n \otimes I_{\mathcal{E}})^*.
\]
It is, however, easy to see that
\[
(S_1 \otimes I_{\mathcal{E}})^*W_L (e_n^{\otimes m} \otimes e_{\mu }\otimes \eta )=W_L(S_n \otimes I_{\mathcal{E}})^*(e_n^{\otimes m} \otimes e_{\mu }\otimes \eta ),
\]
for $m\geq 0, \eta \in \mathcal{E}$ and $\mu=\phi$ or,  $\mu=\mu_1 \dots \mu_k\in \mathcal{F}_n^+$ with $\mu_1\neq n$ (as $L\mathcal{E}\subseteq \Omega \otimes \mathcal{E}$).
\end{remark}

Given an isometry $W_L$, the following theorem highlights the condition on $L$ under which $W_L$ becomes a unitary operator. This subsequently recovers the unitary Fock representations previously obtained by Li in \cite[ Corollary 3.6]{Boyu Li22}. We note that the following assumes $L$ to be an isometry, which is a necessary condition for $W_L$ to be an isometry (see Theorem \ref{ W_L iso iff L}).

\begin{theorem}\label{W_L uni}
Let $\cle$ be a Hilbert space, and let $L \in \clb(\cle, \clf^2_n \otimes \cle)$ be an isometry. Then $W_L$ is unitary if and only if
\[
L \cle = \Omega \otimes \cle.
\]
\end{theorem}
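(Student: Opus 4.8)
The plan is to prove both implications, using the Nica-covariance classification (Theorem \ref{Thm: Nica covariant characterization}) for the forward direction and the isometry criterion (Theorem \ref{ W_L iso iff L}) together with an explicit surjectivity check for the converse.

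For the forward direction, suppose $W_L$ is unitary. Then $W_L$ is in particular an isometry, so by Theorem \ref{Prop: Odometer operator} the pair $(W_L, S^{\cle})$ is an isometric Fock representation. I would first note that it is automatically Nica-covariant: since $(W_L, S^{\cle})$ is a Fock representation, $W_L (S_n \otimes I_\cle) = (S_1 \otimes I_\cle) W_L$, and multiplying this identity on the left and on the right by $W_L^*$ and using $W_L^* W_L = W_L W_L^* = I$ gives $W_L^*(S_1 \otimes I_\cle) = (S_n \otimes I_\cle) W_L^*$. Theorem \ref{Thm: Nica covariant characterization} then yields $L\cle \subseteq \Omega \otimes \cle$. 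The remaining point is to promote this inclusion to an equality. Directly from Definition \ref{def odo map}, for \emph{any} symbol the odometer map $W_L$ maps $(\Omega \otimes \cle)^{\perp} = \bigoplus_{m \geq 1} (\C^n)^{\otimes m} \otimes \cle$ into itself, since the image of every basis vector $e_\mu \otimes \eta$ with $|\mu| \geq 1$ is a sum of tensors of length $\geq 1$ (in the last case $W_L(e_n^{\otimes m} \otimes \eta) = e_1^{\otimes m} \otimes L\eta = (S_1^m \otimes I_\cle) L\eta$ still has all components of length $\geq m \geq 1$). As now in addition $W_L(\Omega \otimes \cle) = L\cle \subseteq \Omega \otimes \cle$, for any $v \in (\Omega \otimes \cle) \ominus L\cle$ and any preimage $x = \Omega \otimes \eta_0 + x'$ with $x' \in (\Omega \otimes \cle)^{\perp}$ one gets $v = P_{\Omega \otimes \cle}(W_L x) = L\eta_0 \in L\cle$, forcing $v = 0$; hence $L\cle = \Omega \otimes \cle$.

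For the converse, suppose $L\cle = \Omega \otimes \cle$. Since $L$ is an isometry with range $\Omega \otimes \cle$, it is a unitary from $\cle$ onto $\Omega \otimes \cle$. A direct computation from \eqref{eqn E_L}, using $\overline{\rm span}\{ e^{\otimes p}_{1} \otimes L\zeta : p \geq 1, \zeta \in \cle\} = \overline{\rm span}\{ e^{\otimes p}_{1} \otimes \zeta : p \geq 1, \zeta \in \cle\}$, shows $\cle_L = \Omega \otimes \cle$; thus $L$ satisfies conditions (1) and (2) of Theorem \ref{ W_L iso iff L}, so $W_L$ is an isometry. It remains to check surjectivity, which I would do by exhibiting an explicit preimage for each vector of the orthonormal basis of $\clf^2_n \otimes \cle$. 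One has $\Omega \otimes \eta = W_L(\Omega \otimes L^{-1}(\Omega \otimes \eta))$; for $m \geq 1$, $e_1^{\otimes m} \otimes \eta = W_L(e_n^{\otimes m} \otimes L^{-1}(\Omega \otimes \eta))$; and if $\nu \in F_n^+$ has a letter different from $1$, writing $e_\nu = e_1^{\otimes k} \otimes e_{\nu_{k+1}} \otimes \cdots \otimes e_{\nu_r}$ with $\nu_{k+1} \neq 1$, then $\nu_{k+1} - 1 \neq n$ and $e_\nu \otimes \eta = W_L(e_n^{\otimes k} \otimes e_{\nu_{k+1}-1} \otimes e_{\nu_{k+2}} \otimes \cdots \otimes e_{\nu_r} \otimes \eta)$ by Definition \ref{def odo map}. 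Hence $\ran W_L$ is dense, and being the range of an isometry it is closed, so $\ran W_L = \clf^2_n \otimes \cle$ and $W_L$ is unitary.

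I expect the only non-formal point to be the last step of the forward direction, namely upgrading $L\cle \subseteq \Omega \otimes \cle$ to $L\cle = \Omega \otimes \cle$; everything else is bookkeeping with the defining formula for $W_L$. An alternative to invoking Nica covariance would be to write $W_L$ as a block-triangular operator with respect to $(\Omega \otimes \cle) \oplus (\Omega \otimes \cle)^{\perp}$ and extract the needed conclusion from $W_L^* W_L = W_L W_L^* = I$, but the Nica-covariance route is shorter and reuses results already in place.
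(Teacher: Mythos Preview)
Your proof is correct and follows essentially the same route as the paper: both directions invoke Theorem~\ref{Thm: Nica covariant characterization} (via the elementary observation that unitarity of $W_L$ turns $W_L(S_n\otimes I_\cle)=(S_1\otimes I_\cle)W_L$ into the Nica relation) to obtain $L\cle\subseteq\Omega\otimes\cle$, and both handle the converse by reducing $\cle_L$ to $\Omega\otimes\cle$, applying Theorem~\ref{ W_L iso iff L}, and exhibiting explicit preimages of basis vectors.

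The only place you deviate is in upgrading the inclusion $L\cle\subseteq\Omega\otimes\cle$ to an equality. The paper invokes the adjoint formula of Proposition~\ref{Lemma: Adjoint of W} at $m=0$ to see that $W_L^*(\Omega\otimes\cle)\subseteq\Omega\otimes\cle$, so $\Omega\otimes\cle$ reduces $W_L$ and the restriction is unitary. You instead read off from Definition~\ref{def odo map} that $W_L\big((\Omega\otimes\cle)^\perp\big)\subseteq(\Omega\otimes\cle)^\perp$ for \emph{any} symbol, and then use surjectivity directly. Both arguments amount to the same reducing-subspace observation; yours has the small advantage of not needing the adjoint computation of Proposition~\ref{Lemma: Adjoint of W}.
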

\begin{proof}
Suppose $L \cle = \Omega \otimes \cle$. We intend to apply Theorem \ref{ W_L iso iff L} to the odometer map $W_L$. We already know that $L$ is an isometry. Moreover, in this case, the space $\cle_L$, defined by \eqref{eqn E_L}, simplifies to
\[
\cle_L = \Omega \otimes \cle.
\]
Therefore, by Theorem \ref{ W_L iso iff L}, $W_L$ is an isometry. Now, to show that $W_L$ is onto, it suffices to find the preimage of orthonormal basis elements for $\clf_n^2 \otimes \cle$. Equivalently, it is enough to find preimage of $e_{\mu}\otimes \eta$ for all $\mu \in F_n^+$ and $\eta \in \cle$. This follows at once, looking at the definition of the map $W_L$ and the fact that $L \cle = \Omega \otimes \cle$. However, for the convenience of general readers, we provide complete details. Fix $\eta \in \cle$. As $L \cle = \Omega \otimes \cle,$ there exists $\gamma \in \cle$ such that
\[
L \gamma = \Omega \otimes \eta.
\]
By the definition of $W_L$, we know $W_L(\Omega \otimes \gamma) = \Omega \otimes \eta$, and in general, for all $m \geq 1$, we have
\[
W_L(e_n^{\otimes m} \otimes \gamma)=e_1^{\otimes m} \otimes \eta.
\]
Next, assume that $e_\mu = e_1^{\otimes m} \otimes e_{\mu_1} \otimes \cdots \otimes e_{\mu_k}$, and $\mu_1 > 1$. Again, by the definition of $W_L$, we have that
\[
W_L(e_n^{\otimes m} \otimes e_{\mu_1 - 1} \otimes e_{\mu_2} \otimes \cdots \otimes e_{\mu_k} \otimes \eta) = e_\mu \otimes \eta.
\]
It now follows that $W_L$ is onto and hence a unitary. For the converse, assume that $W_L$ is unitary. In view of Proposition \ref{Lemma: Adjoint of W}, $m=0$ implies that
\[
W_L^*(\Omega \otimes h_l) = \sum_{q \in \Lambda} \overline{c_{l}^{h_q}} (\Omega \otimes h_q) \in \Omega \otimes \cle,
\]
and hence $W_L^* (\Omega \otimes \cle) \subseteq \Omega \otimes \cle$. Now $W_L$ is unitary makes $(W_L, S^\cle)$ a Nica covariant representation of $O_n$, and hence, by Theorem \ref{Thm: Nica covariant characterization}, we have $L \cle \subseteq \Omega \otimes \cle$. By the definition of $W_L$, we now know that $W_L(\Omega \otimes \cle) \subseteq \Omega \otimes \cle$. Therefore, the closed subspace $\Omega \otimes \cle$ reduces $W_L$, and hence
\[
W_L|_{\Omega \otimes \cle}: \Omega \otimes \cle \raro \Omega \otimes \cle,
\]
is a unitary operator. Then $L \cle = W_L(\Omega \otimes \cle) = \Omega \otimes \cle$ proves the necessary part of the theorem.
\end{proof}

Li's proof \cite[Corollary 3.6]{Boyu Li22} of the above theorem employs distinct methodologies, such as Wold decompositions of odometer semigroups.

When $\cle$ is finite dimensional, Theorem \ref{Thm: Nica covariant characterization} simplifies to:
\begin{theorem}\label{Thm: scalar Nica covariant characterization}
Let $\mathcal{E}$  be a finite-dimensional Hilbert space and let $(W_L, S^{\mathcal{E}})$ be an isometric representation of $O_n$. Then, the following statements are equivalent:
  \begin{enumerate}
      \item $(W_L,S^{\mathcal{E}})$ is a Nica-covariant representation of $O_n$.
      \item $L:\mathcal{E}\to \Omega \otimes\mathcal{E}$ is a unitary.
      \item $W_L$ is a unitary.
  \end{enumerate}
  \end{theorem}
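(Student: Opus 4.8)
The plan is to prove the cycle of implications $(3)\Rightarrow(2)\Rightarrow(1)\Rightarrow(3)$, leveraging the already-established Theorems \ref{Thm: Nica covariant characterization} and \ref{W_L uni} together with the finite-dimensionality of $\cle$. Since $(W_L, S^\cle)$ is assumed to be an isometric Fock representation, Theorem \ref{ W_L iso iff L} tells us $L$ is an isometry, so I may freely use that throughout.

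For $(3)\Rightarrow(2)$: assume $W_L$ is unitary. By Theorem \ref{W_L uni} (whose hypothesis that $L$ is an isometry is already met), $W_L$ unitary is equivalent to $L\cle = \Omega\otimes\cle$. Thus $L\colon\cle\to\Omega\otimes\cle$ is an isometry that is onto $\Omega\otimes\cle$, hence a unitary; this gives (2). The implication $(2)\Rightarrow(3)$ is equally immediate and actually makes (2) and (3) trivially equivalent given Theorem \ref{W_L uni}: if $L\colon\cle\to\Omega\otimes\cle$ is unitary then in particular $L\cle = \Omega\otimes\cle$, so $W_L$ is unitary by the same theorem. Similarly $(2)\Rightarrow(1)$ is quick: if $L\cle=\Omega\otimes\cle$ then certainly $L\cle\subseteq\Omega\otimes\cle$, so $(W_L,S^\cle)$ is Nica-covariant by Theorem \ref{Thm: Nica covariant characterization}. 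So the only implication with genuine content is $(1)\Rightarrow(2)$ (or equivalently $(1)\Rightarrow(3)$), and this is exactly where finite-dimensionality of $\cle$ must be used.

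The hard part is $(1)\Rightarrow(2)$. Assume $(W_L,S^\cle)$ is Nica-covariant. By Theorem \ref{Thm: Nica covariant characterization} we get $L\cle\subseteq\Omega\otimes\cle$, so identifying $\Omega\otimes\cle$ with $\cle$, the map $L$ is an isometry of $\cle$ into itself. The issue is to upgrade ``isometry'' to ``onto'', i.e. to ``unitary''. This is false in general (e.g. a unilateral shift on an infinite-dimensional $\cle$), which is precisely why the hypothesis $\dim\cle<\infty$ enters: an isometry on a finite-dimensional inner product space is automatically surjective, hence unitary. So the argument is: $L\colon\cle\to\Omega\otimes\cle\cong\cle$ is a linear isometry between spaces of the same finite dimension, therefore surjective, therefore unitary — this is (2). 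Then (2) $\Rightarrow$ (3) closes the loop as above. I would present the routine implications crisply and spend a sentence emphasizing the finite-dimensionality step, since that is the only place the extra hypothesis does any work.

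One more detail worth spelling out: the statement of this theorem omits the symbol $L\in\clb(\cle,\clf^2_n\otimes\cle)$ from its hypotheses, but since $(W_L,S^\cle)$ being an isometric Fock representation presupposes (by Theorem \ref{Prop: Odometer operator}) the existence of a unique symbol $L$, and by Theorem \ref{ W_L iso iff L} that $L$ is an isometry and $L\cle\subseteq\cle_L$, there is nothing to set up beyond invoking those earlier results. I would therefore open the proof by recording ``$L$ is an isometry'' once, then run the four short implications, flagging $(1)\Rightarrow(2)$ as the place where $\dim\cle<\infty$ is indispensable.
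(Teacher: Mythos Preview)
Your proposal is correct and follows essentially the same approach as the paper: both identify $(1)\Rightarrow(2)$ as the only nontrivial step and dispatch it by combining Theorem~\ref{Thm: Nica covariant characterization} (giving $L\cle\subseteq\Omega\otimes\cle$) with the fact that an isometry on a finite-dimensional space is automatically unitary. The only cosmetic difference is the routing of the easy implications: the paper closes the cycle via $(3)\Rightarrow(1)$ directly (taking adjoints in $W_L(S_n\otimes I_\cle)=(S_1\otimes I_\cle)W_L$ with $W_L$ unitary), whereas you route through $(3)\Rightarrow(2)\Rightarrow(1)$ using Theorems~\ref{W_L uni} and~\ref{Thm: Nica covariant characterization}; both are immediate.
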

  \begin{proof}
\noindent $(1) \Rightarrow (2)$: Let \( (W_L, S^{\mathcal{E}}) \) be a Nica-covariant representation of \( O_n \). Then, Theorem  \ref{ W_L iso iff L} and Theorem \ref{Thm: Nica covariant characterization} together establish that the operator \( L: \mathcal{E} \to \Omega \otimes \mathcal{E} \) is an isometry. Since \( \mathcal{E} \) is finite-dimensional, the rank-nullity theorem further implies that \( L: \mathcal{E} \to \Omega \otimes \mathcal{E} \) is a unitary operator.

\noindent $(2) \Rightarrow (3)$: This is immediate from Theorem \ref{W_L uni}.

\noindent $(3) \Rightarrow (1)$: The fact that \( W_L \) is unitary, together with the relation $W_L (S_n \otimes I_{\cle})= (S_1   \otimes I_{\cle} )  W_L $
, implies that \( (W_L, S^{\mathcal{E}}) \) forms a Nica-covariant representation of \( O_n \).
\end{proof}

Moreover, if we consider the scalar case (that is, $\cle = \C$), then Theorem \ref{Thm: scalar Nica covariant characterization} simplifies to:

\begin{cor} \label{scalar}
The isometric Fock representation \( (W_{\xi}, S) \), where \( \xi \in \mathcal{F}_n^2 \), is Nica-covariant if and only if there exists \( c \in \mathbb{T} \) such that \( \xi = c\Omega \).
\end{cor}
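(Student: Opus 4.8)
The plan is to obtain this as the scalar specialization ($\cle=\C$) of Theorem \ref{Thm: scalar Nica covariant characterization}, via the dictionary already recorded after Corollary \ref{Prop: Odometer operator to scalar case}: a symbol $L\in\clb(\C,\clf^2_n)$ is simply the vector $\xi=L1\in\clf^2_n$, and $\Omega\otimes\C$ is identified with $\C\Omega$. Since $\C$ is one-dimensional, hence finite dimensional, Theorem \ref{Thm: scalar Nica covariant characterization} applies verbatim to $(W_\xi,S)$.

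First I would record the normalization: if $(W_\xi,S)$ is an isometric Fock representation, then $\|\xi\|=\|W_\xi\Omega\|=\|\Omega\|=1$. Now assume $(W_\xi,S)$ is Nica-covariant. By the equivalence $(1)\Leftrightarrow(2)$ of Theorem \ref{Thm: scalar Nica covariant characterization}, the operator $L:\C\to\C\Omega$ is unitary; in the language of vectors this says exactly that $\xi\in\C\Omega$, and together with $\|\xi\|=1$ this forces $\xi=c\Omega$ for some $c\in\T$.

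For the converse, suppose $\xi=c\Omega$ with $c\in\T$. I would first note that $(W_\xi,S)$ is indeed an isometric Fock representation: the map $L:\C\to\C\Omega=\Omega\otimes\C$, $L1=c\Omega$, is an isometry with $L\C=\Omega\otimes\C=\cle_L$, so Theorem \ref{ W_L iso iff L} applies (equivalently, the coefficient sequence of Corollary \ref{cor scalar iso Fock rep} is $c_0=c$, $c_p=0$ for $p\geq1$, which trivially satisfies $\sum_p|c_p|^2=1$ and $\sum_p c_{p+r}\overline{c_p}=0$ for all $r\geq1$). Moreover $L:\C\to\C\Omega$ is onto, hence unitary, so implication $(2)\Rightarrow(3)\Rightarrow(1)$ of Theorem \ref{Thm: scalar Nica covariant characterization} (or Theorem \ref{W_L uni} combined with the remark that a unitary $W_L$ automatically satisfies the Nica-covariance relation) shows $(W_\xi,S)$ is Nica-covariant.

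Every step is a routine transcription of results already established, so I do not expect a genuine obstacle. The only mild points of care are (i) remembering that being an isometric Fock representation already pins down $\|\xi\|=1$, which is what upgrades $\xi\in\C\Omega$ to $\xi=c\Omega$ with $c\in\T$, and (ii) in the converse direction, briefly confirming that $\xi=c\Omega$ does produce an isometric Fock representation before invoking the classification theorems.
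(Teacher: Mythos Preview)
Your proposal is correct and follows essentially the same approach as the paper: the corollary is stated immediately after Theorem \ref{Thm: scalar Nica covariant characterization} as its scalar specialization, and your dictionary (identifying $L\in\clb(\C,\clf^2_n)$ with $\xi=L1$ and unpacking ``$L:\C\to\C\Omega$ unitary'' as ``$\xi=c\Omega$ with $c\in\T$'') is exactly what that specialization requires. Your additional care in verifying that $\xi=c\Omega$ genuinely yields an isometric Fock representation is harmless but strictly redundant, since the statement already takes ``isometric Fock representation'' as a standing hypothesis.
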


\section{Odometer lifting}\label{sec free rep}

This section aims to draw a connection between noncommutative dilations and representations of $O_n$, which will be referred to as the lifting of representations of $O_n$ or odometer lifting. We begin by discussing the principle of noncommutative isometric dilations of pure row contraction, a theory that is largely attributed to Popescu \cite{Popescue} (also see Bunce \cite{Bunce}, Frazho \cite{DN}, and  Durszt and Sz.-Nagy \cite{Frazho}). Let $\clh$ be a Hilbert space and let $T = (T_1, \ldots, T_n)$ be a pure row contraction on $\clh$; that is, $T$ is a row contraction and satisfies the limit condition in \eqref{eqn pure}. Define the \textit{defect space} $\cld_{T^*}$ as
\[
\cld_{T^*} = \overline{\text{ran}}\Big(I_{\clh} - \sum_{j=1}^{n} T_j T_j^*\Big).
\]
The following is known as the noncommutative dilation of $T$ \cite[Theorem 2.1]{Popescue}. The language is slightly different and adapted to our current context. Let $\cls$ be a closed subspace of a Hilbert space $\clh$. Throughout, we denote $P_\cls$ as the orthogonal projection onto $\cls$.

\begin{theorem}[Dilations of pure row contractions]\label{uni model}
Let $T$ be a pure row contraction on $\clh$. Then there exists an isometry $\Pi_T: \clh \raro \clf^2_n \otimes \cld_{T^*}$ such that
\[
\Pi_T T_i^* = (S_i \otimes I_{\cld_{T^*}})^* \Pi_T,
\]
for all $i=1, \ldots, n$. Moreover, if we set
\[
\clq_{T} = \Pi_T \clh,
\]
then $\clq_{T}$ is a joint invariant subspace of $(S_1^* \otimes I_{\cld_{T^*}}, \ldots, S_n^* \otimes I_{\cld_{T^*}})$, and $T \cong P_{Q_T} S^{D_{T^*}} |_{Q_T}$.

Moreover, $\clf^2_n \otimes \cld_{T^*} = \bigoplus_{\mu \in F_n^+} S_\mu \clq_{T}$.
\end{theorem}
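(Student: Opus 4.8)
The plan is to reconstruct Popescu's dilation explicitly and then read the generation statement off the explicit formula. Set $D_{T^*} = \bigl(I_\clh - \sum_{j=1}^n T_j T_j^*\bigr)^{1/2}$, so that $\cld_{T^*} = \overline{\text{ran}}\, D_{T^*}$, and define $\Pi_T : \clh \to \clf^2_n \otimes \cld_{T^*}$ by $\Pi_T h = \sum_{\mu \in F_n^+} e_\mu \otimes D_{T^*} T_\mu^* h$. The first, and essentially only, substantial step is to show this series converges and that $\Pi_T$ is an isometry. For this I would apply the defect identity $\|D_{T^*} k\|^2 = \|k\|^2 - \sum_{j=1}^n \|T_j^* k\|^2$ with $k = T_\mu^* h$ and use that $T_j^* T_\mu^* = T_{\mu g_j}^*$, where $\{\mu g_j : |\mu| = m,\ 1 \le j \le n\}$ enumerates the words of length $m+1$ exactly once; this gives $\sum_{|\mu| = m} \|D_{T^*} T_\mu^* h\|^2 = \sum_{|\mu| = m} \|T_\mu^* h\|^2 - \sum_{|\nu| = m+1} \|T_\nu^* h\|^2$. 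Summing over $m = 0, \dots, N$ telescopes to $\sum_{m \le N} \sum_{|\mu| = m} \|D_{T^*} T_\mu^* h\|^2 = \|h\|^2 - \sum_{|\nu| = N+1} \|T_\nu^* h\|^2$, and the purity hypothesis \eqref{eqn pure} forces the tail to zero as $N \to \infty$, so $\|\Pi_T h\|^2 = \|h\|^2$.

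Next I would record the intertwining relations. Using $(S_i \otimes I_{\cld_{T^*}})^*(e_\mu \otimes \xi) = e_\nu \otimes \xi$ when $\mu = g_i \nu$ and $0$ otherwise, together with $T_{g_i \nu}^* = T_\nu^* T_i^*$, one gets $(S_i \otimes I_{\cld_{T^*}})^* \Pi_T h = \sum_{\nu \in F_n^+} e_\nu \otimes D_{T^*} T_\nu^* T_i^* h = \Pi_T T_i^* h$. Thus $\clq_T := \Pi_T \clh$ is invariant under every $(S_i \otimes I_{\cld_{T^*}})^*$, and since $\Pi_T$ is a unitary from $\clh$ onto $\clq_T$, this intertwining relation says precisely that $T$ is unitarily equivalent to the compression $P_{\clq_T} S^{\cld_{T^*}}|_{\clq_T}$.

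For the final clause I would show that $\clf^2_n \otimes \cld_{T^*}$ equals the closed linear span $\clm$ of the subspaces $(S_\mu \otimes I_{\cld_{T^*}})\clq_T$, $\mu \in F_n^+$ (this is the minimality of the dilation; I read the $\bigoplus$ in the statement as this closed span, the pieces being generally not mutually orthogonal). First, $\clm$ is invariant under each $S_i \otimes I_{\cld_{T^*}}$ because $(S_i \otimes I_{\cld_{T^*}})(S_\mu \otimes I_{\cld_{T^*}})\clq_T = (S_{g_i \mu} \otimes I_{\cld_{T^*}})\clq_T \subseteq \clm$. Splitting off the $\mu = g_0$ term in the definition of $\Pi_T$ and grouping the remaining words by first letter gives $\Pi_T h = \Omega \otimes D_{T^*} h + \sum_{i=1}^n (S_i \otimes I_{\cld_{T^*}}) \Pi_T T_i^* h$, hence $\Omega \otimes D_{T^*} h = \Pi_T h - \sum_{i=1}^n (S_i \otimes I_{\cld_{T^*}}) \Pi_T T_i^* h \in \clm$. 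Since $\overline{\text{ran}}\, D_{T^*} = \cld_{T^*}$, this shows $\Omega \otimes \cld_{T^*} \subseteq \clm$, and then the $S_i \otimes I_{\cld_{T^*}}$-invariance of $\clm$ yields $e_\nu \otimes \xi = (S_\nu \otimes I_{\cld_{T^*}})(\Omega \otimes \xi) \in \clm$ for all $\nu \in F_n^+$ and $\xi \in \cld_{T^*}$; as such vectors span $\clf^2_n \otimes \cld_{T^*}$, we get $\clm = \clf^2_n \otimes \cld_{T^*}$.

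The main obstacle I anticipate is the first step: making the defect/telescoping bookkeeping airtight, in particular fixing the noncommutative word conventions so that $T_j^* T_\mu^*$ genuinely ranges over the length-$(m+1)$ words without repetition (i.e. using the concatenation $\mu g_j$ rather than $g_j \mu$) and so that the telescoping collapses to $\|h\|^2$ minus a tail controlled by \eqref{eqn pure}. Once $\Pi_T$ is known to be an isometric intertwiner, the invariance of $\clq_T$, the unitary equivalence $T \cong P_{\clq_T} S^{\cld_{T^*}}|_{\clq_T}$, and the generation statement are all short formal consequences of the displayed identities.
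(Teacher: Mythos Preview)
Your argument is correct and is essentially Popescu's original construction, but note that the paper does not itself prove this theorem: it is quoted as \cite[Theorem 2.1]{Popescue}, with the remark immediately afterwards that ``one can write down $\Pi_T$ explicitly; however, for our purposes, we do not need such representations.'' So there is no in-paper proof to compare against; you have simply supplied the omitted details.

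Two small comments. First, your caution about word conventions is well placed and your choices are the right ones for this paper: with $T_\mu = T_{\mu_1}\cdots T_{\mu_k}$ one indeed has $T_j^* T_\mu^* = T_{\mu g_j}^*$ and $T_{g_i\nu}^* = T_\nu^* T_i^*$, so both the telescoping and the intertwining go through exactly as you wrote. Second, your reading of the final clause is also correct: the symbol $\bigoplus_{\mu} S_\mu \clq_T$ in the statement should be understood as the closed linear span (minimality), not as an orthogonal direct sum, since the subspaces $(S_\mu\otimes I_{\cld_{T^*}})\clq_T$ are generally not mutually orthogonal; your argument via $\Omega\otimes D_{T^*}h = \Pi_T h - \sum_i (S_i\otimes I_{\cld_{T^*}})\Pi_T T_i^* h$ and $S_i$-invariance of the span establishes exactly this.
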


One can write down $\Pi_T$ explicitly. However, for our purposes, we do not need such representations. The final identity is referred to as the minimality property, a distinguished property that makes $\Pi_T$ unique up to unitary equivalence.

\begin{definition}
We refer to the pair $(\Pi_T, \clf^2_n \otimes \cld_{T^*})$ as $T$'s minimal dilation. The closed subspace $\clq_{T}$ will be referred to as a model space.
\end{definition}

In other words, tuples of creation operators on vector-valued Fock spaces are universal models for pure row contractions \cite{Popescue}. This section aims to show that Fock representations are universal models for contractive representations of $O_n$. To give greater clarity, we now introduce the following notion:

\begin{definition}
Let $(W, T)$ defined on $\clh$ be a contractive representation of $O_n$, and let $L \in \clb(\cld_{T^*}, \clf^2_n \otimes \cld_{T^*})$. Then $(W_L, S^{\cld_{T^*}})$ is said to be an odometer lift of $(W, T)$ if
\[
\Pi_T W^* = W_L^* \Pi_T,
\]
where $(\Pi_T, \clf^2_n \otimes \cld_{T^*})$ is the minimal dilation of $T$.
\end{definition}

Suppose $(W_L, S^{\cld_{T^*}})$ is an odometer lift of $(W, T)$ for some symbol $L \in \clb(\cld_{T^*}, \clf^2_n \otimes \cld_{T^*})$. In view of Theorem \ref{uni model}, we have
\[
\Pi_T T_i^* = (S_i \otimes I_{\cld_{T^*}})^* \Pi_T,
\]
for all $i=1, \ldots, n$. Then, with the additional property that $\Pi_T W^* = W_L^* \Pi_T$, it follows that the model space $\clq_{T} ( = \Pi_T \clh)$ is jointly invariant under $(W_L^*, S_1^* \otimes I_{\cld_{T^*}}, \ldots, S_n^* \otimes I_{\cld_{T^*}})$. Therefore, we can say that $(W,T)$ dilates to $(W_L, S^{\cld_{T^*}})$. Also, in particular, $\clq_{T}$ is jointly invariant under $n+1$ tuple $(W_L, S^{\cld_{T^*}})$. With this arrangement, we are now ready to prove our dilation result concerning the representations of $O_n$.

\begin{theorem}\label{thm dil lift}
A contractive representation of $O_n$ always admits an odometer lift.
\end{theorem}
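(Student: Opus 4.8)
The plan is to start from a contractive representation $(W,T)$ of $O_n$ on $\clh$, feed $T$ into Popescu's minimal dilation machine (Theorem \ref{uni model}) to obtain the isometry $\Pi_T : \clh \raro \clf^2_n \otimes \cld_{T^*}$ with $\Pi_T T_i^* = (S_i \otimes I_{\cld_{T^*}})^* \Pi_T$, and then to \emph{define} a candidate operator $\tilde W$ on $\clf^2_n \otimes \cld_{T^*}$ by the requirement $\Pi_T W^* = \tilde W^* \Pi_T$, i.e.\ $\tilde W^* = \Pi_T W^* \Pi_T^*$ on the model space $\clq_T = \ran \Pi_T$ and to be determined on $\clq_T^\perp$. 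The key point will be that $\tilde W$ (or rather a suitable extension of it to all of $\clf^2_n \otimes \cld_{T^*}$) is forced by the intertwining relations with $S^{\cld_{T^*}}$ to satisfy exactly the defining relations \eqref{eqn def OS} of a representation of $O_n$, hence $(\tilde W, S^{\cld_{T^*}})$ is a Fock representation; then Theorem \ref{Prop: Odometer operator} identifies $\tilde W = W_L$ for a unique symbol $L \in \clb(\cld_{T^*}, \clf^2_n \otimes \cld_{T^*})$, and by construction $\Pi_T W^* = W_L^* \Pi_T$, which is precisely the assertion that $(W_L, S^{\cld_{T^*}})$ is an odometer lift of $(W,T)$.

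Concretely I would proceed in the following steps. First, use $WT_i = T_{i+1}$ for $i<n$ and $WT_n = T_1 W$ (taking adjoints) to get $T_i^* W^* = T_{i+1}^* W^*$ ... wait, more usefully: $W^* $ intertwines the $T_i^*$ the way $w$ intertwines the generators. Combined with $\Pi_T T_i^* = (S_i\otimes I)^* \Pi_T$, one checks that the operator $A := \Pi_T W^* \Pi_T^*$ on $\clq_T$ satisfies $A (S_i\otimes I)^*|_{\clq_T} = (S_{i+1}\otimes I)^*|_{\clq_T} A$ for $i<n$ and $A (S_n\otimes I)^*|_{\clq_T}$ relates to $(S_1\otimes I)^*|_{\clq_T} A$ appropriately. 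Second — and this is the crux — extend $A^*$ from $\clq_T$ to a bounded operator on all of $\clf^2_n\otimes\cld_{T^*}$ that still intertwines the creation operators correctly. Here I would exploit the minimality decomposition $\clf^2_n\otimes\cld_{T^*} = \bigoplus_{\mu\in F_n^+} S_\mu \clq_T$: the relation $W^*(S_i\otimes I) = (\cdots)$ together with $S_i^* S_j = \delta_{ij} I$ lets one propagate the definition of $W_L^*$ off $\clq_T$ onto each $S_\mu\clq_T$ consistently, and one checks the pieces agree on overlaps. Third, once $\tilde W\in\clb(\clf^2_n\otimes\cld_{T^*})$ is in hand with $\tilde W(S_i\otimes I)=(S_{i+1}\otimes I)$ for $i<n$ and $\tilde W(S_n\otimes I)=(S_1\otimes I)\tilde W$, invoke Theorem \ref{Prop: Odometer operator} to write $\tilde W = W_L$, and invoke Corollary \ref{cor model to odometer}'s statement $(W,T)\cong P_{Q_T}(W_L,S^{\cld_{T^*}})|_{\clq_T}$, which follows from $\Pi_T W^* = W_L^*\Pi_T$ together with $T\cong P_{Q_T}S^{\cld_{T^*}}|_{\clq_T}$ and the fact that $\Pi_T$ is an isometry onto $\clq_T$.

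The main obstacle I anticipate is the well-definedness and boundedness of the extension in step two: a priori $\Pi_T W^*\Pi_T^*$ only lives on $\clq_T$, and $\clq_T$ is merely \emph{co}-invariant for $S^{\cld_{T^*}}$, so there is no automatic way to push $\tilde W$ forward along the $S_i$. One has to use the representation relations themselves as the \emph{definition} of $\tilde W$ on $S_\mu \clq_T$ (roughly, $\tilde W|_{S_1^{k}\clq_T}$ is governed by iterating $\tilde W S_n = S_1\tilde W$, and $\tilde W|_{S_i S_\mu\clq_T}$ by $\tilde W S_i = S_{i+1}$ when $i<n$), then verify this is consistent where the subspaces $S_\mu\clq_T$ interact and that the resulting operator is bounded — boundedness will come from $\|W\|\le 1$ (or just $W$ bounded) on $\clq_T$ and the orthogonality of the summands in $\bigoplus_\mu S_\mu\clq_T$, giving a uniform bound on each block. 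An alternative, possibly cleaner, route to the same obstacle: define $L \in \clb(\cld_{T^*}, \clf^2_n\otimes\cld_{T^*})$ directly by an explicit formula extracted from $W$ and $\Pi_T$ (mimicking $L\eta = \tilde W(\Omega\otimes\eta)$), form $W_L$ via Definition \ref{def odo map}, and then check $\Pi_T W^* = W_L^*\Pi_T$ by evaluating both sides on $\clh$ and comparing Fourier coefficients in $\clf^2_n\otimes\cld_{T^*}$; this trades the extension problem for a (longer but routine) coefficient computation, and the choice between the two is mostly a matter of exposition.
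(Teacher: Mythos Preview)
Your \emph{alternative route} is precisely what the paper does: it sets $L\eta := (\Pi_T W \Pi_T^*)(\Omega\otimes\eta)$, forms $W_L$ via Definition~\ref{def odo map}, and then verifies the equivalent identity $W\Pi_T^* = \Pi_T^* W_L$ on the basis vectors $e_\mu\otimes\eta$, after first recording the two preparatory identities
\[
W\Pi_T^*(S_k\otimes I_{\cld_{T^*}}) = \Pi_T^*(S_{k+1}\otimes I_{\cld_{T^*}}) \quad (k<n), \qquad
W\Pi_T^*(S_n\otimes I_{\cld_{T^*}}) = \Pi_T^*(S_1\otimes I_{\cld_{T^*}})\,\Pi_T W\Pi_T^*,
\]
both of which are immediate from the dilation intertwining together with the odometer relations for $(W,T)$. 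So on that branch you and the paper coincide, and your instinct that this ``trades the extension problem for a routine coefficient computation'' is exactly right.

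Your \emph{first route}, however, has a genuine gap. Despite the suggestive $\bigoplus$ in the statement of Theorem~\ref{uni model}, minimality only says that $\clf^2_n\otimes\cld_{T^*}$ is the closed linear span of the subspaces $S_\mu\clq_T$; these are \emph{not} mutually orthogonal in general. Since $\clq_T$ is merely $(S^{\cld_{T^*}})^*$-invariant it typically contains vectors supported at every length, and already in simple finite-dimensional examples one has $\langle \clq_T, (S_i\otimes I)\clq_T\rangle \ne 0$. Your boundedness argument explicitly invokes ``the orthogonality of the summands in $\bigoplus_\mu S_\mu\clq_T$'', so it does not go through, and the promised consistency check on overlaps is then no longer a local matter---it becomes essentially equivalent to verifying the full intertwining relation. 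The clean fix is exactly your alternative: define $L$ first so that $W_L$ is globally defined and bounded by Remark~\ref{remark odometer maps}, and verify $\Pi_T W^* = W_L^*\Pi_T$ directly. That is the paper's proof.
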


\begin{proof}
Let $(W, T)$ defined on $\clh$ be a contractive representation of $O_n$. First, we establish two essential identities (namely, \eqref{eqn W Sk} and \eqref{eqn W S1} as below) that will play a significant part in the main body of the proof of this theorem. Since $\Pi_T$ is an isometry and $P_{\clq_T} = \Pi_T \Pi_T^*$, it follows that
\[
\Pi_T^* = \Pi_T^* P_{\clq_{T}}.
\]
For each $k<n$, we compute
\[
W \Pi_T^*(S_k \otimes I_{\cld_{T^*}}) = W T_k \Pi_T^* = T_{k+1} \Pi_T^*,
\]	
where the final identity follows from the odometer property that $W T_k = T_{k+1}$. Since $T_{k+1} \Pi_T^* = \Pi_T^* (S_{k+1} \otimes I_{\cld_{T^*}})$, it follows that
\begin{equation}\label{eqn W Sk}
W \Pi_T^*(S_k \otimes I_{\cld_{T^*}}) = \Pi_T^* (S_{k+1} \otimes I_{\cld_{T^*}}),
\end{equation}
for all $k = 1, \ldots, n-1$. This proves the first set of essential identities. For the final one, similar to the above, we compute
\[
W \Pi_T^*(S_n \otimes I_{\cld_{T^*}}) = W T_n \Pi_T^* = T_1 W \Pi_T^* = T_1 \Pi_T^* \Pi_T W \Pi_T^*.
\]
Then $T_1 \Pi_T^* = \Pi_T^*  (S_1 \otimes I_{\cld_{T^*}})$ implies
\begin{equation}\label{eqn W S1}
W \Pi_T^* (S_n \otimes I_{\cld_{T^*}}) = \Pi_T^*  (S_1 \otimes I_{\cld_{T^*}}) \Pi_T W \Pi_T^*.
\end{equation}
Now we turn to construct the required odometer map $W_L$ on $\clf^2_n \otimes \cld_{T^*}$ so that $(W_L, S^{\cld_{T^*}})$ is an odometer lift of $(W,T)$. Define $L: \cld_{T^*} \rightarrow \clf^2_n \otimes \cld_{T^*}$ by
\[
L \eta = (\Pi_T W \Pi_T^*)(\Omega \otimes \eta) \qquad (\eta \in \cld_{T^*}),
\]
where $(\Pi_T, \clf^2_n \otimes \cld_{T^*})$ is the minimal dilation of $T$. Clearly, $L \in \clb(\cld_{T^*}, \clf^2_n \otimes \cld_{T^*})$ is a symbol. Consider the odometer map $W_L$ corresponding to $L$. We claim that $\Pi_T W^*=W_{L}^*\Pi_T$. It is enough to prove this identity on $\{e_\mu\otimes\eta: \mu \in F_n^+, \eta \in \cld_{T^*}\}$. To this end, fix $\eta \in \cld_{T^*}$ and $\mu \in F_n^+$. Suppose $k<n$. By \eqref{eqn W Sk}, it follows that
\[
\begin{split}
W \Pi_T^*(e_k\otimes e_\mu\otimes\eta) & = W \Pi_T^* (S_k \otimes I_{\cld_{T^*}}) (e_\mu\otimes\eta)
\\
& = \Pi_T^* (S_{k+1} \otimes I_{\cld_{T^*}}) (e_\mu\otimes\eta)
\\
& = \Pi_T^* (e_{k+1} \otimes e_\mu\otimes\eta)
\\
& = \Pi_T^*  W_L(e_k \otimes e_\mu\otimes\eta).
\end{split}
\]
Now we need to focus on vectors of the form $e_n \otimes e_\mu \otimes \eta$. First, we consider the case of $e_n \otimes \eta$. In this case, we use the identity \eqref{eqn W S1} to derive
\[
W \Pi_T^*(e_n\otimes \eta) = W \Pi_T^* (S_n \otimes I_{\cld_{T^*}}) (\Omega \otimes \eta) = \Pi_T^* (S_1 \otimes I_{\cld_{T^*}}) \Pi_T W \Pi_T^*(\Omega\otimes\eta).
\]
But $(\Pi_T W \Pi_T^*)(\Omega \otimes \eta) = L \eta$, and hence
\[
\Pi_T^* (S_1 \otimes I_{\cld_{T^*}}) \Pi_T W \Pi_T^*(\Omega\otimes\eta) = \Pi_T^*(e_1 \otimes L \eta).
\]
Since, $W_L(e_n\otimes \eta) = e_1 \otimes L \eta$, by the definition of the odometer map $W_L$, it follows that
\[
W \Pi_T^*(e_n\otimes \eta) = \Pi_T^* W_L(e_n\otimes \eta).
\]
Finally, we consider the general case of $e_n \otimes e_\mu \otimes \eta$. Again, using the identity \eqref{eqn W S1}, we see
\[
\begin{split}
W \Pi_T^*(e_n\otimes e_\mu \otimes\eta) & = W \Pi^*_T (S_n \otimes I_{\cld_{T^*}}) (e_\mu\otimes\eta)
\\
& = \Pi_T^* (S_1\otimes I_{\cld_{T^*}}) \Pi_T W \Pi_T^* (e_\mu\otimes\eta)
\\
& = \Pi_T^* (e_1 \otimes \Pi_T W \Pi^*_T (e_\mu\otimes\eta)).
\end{split}
\]
Using the preceding two cases repeatedly along with the identities $\Pi_T^* = \Pi_T^* P_{\clq_T}$ and $P_{\clq_T} (S_k \otimes I_{\cld_{T^*}}) = P_{\clq_T} (S_k \otimes I_{\cld_{T^*}})P_{\clq_T}$ for all $k$, we conclude that
\[
W \Pi_T^*(e_n\otimes e_\mu \otimes\eta) =  \Pi_T^* W_L (e_n \otimes e_{\mu} \otimes \eta).
\]
This completes the proof of the theorem.
\end{proof}

From the standpoint of model spaces, Theorem \ref{thm dil lift} and the discussion that came before it yield the following:

\begin{cor}\label{cor model to odometer}
Let $(W,T)$ be a contractive representation of $O_n$. Then there exist a symbol $L \in \clb(\cld_{T^*}, \clf^2_n \otimes \cld_{T^*})$ and a closed subspace $\clq_{T} \subseteq \clf^2_n \otimes \cld_{T^*}$ such that $(S_j \otimes I_{\cld_{T^*}})^* \clq_{T} \subseteq \clq_{T}$ for all $j=1, \ldots, n$, and $W_L^* \clq_{T} \subseteq \clq_{T}$, and
\[
(W, T) \cong P_{Q_T} (W_L, S^{D_{T^*}}) |_{Q_T}.
\]
\end{cor}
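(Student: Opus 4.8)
The plan is to read off Corollary \ref{cor model to odometer} as essentially a bookkeeping consequence of Theorem \ref{thm dil lift} combined with Theorem \ref{uni model}. So the first step is to invoke Theorem \ref{thm dil lift}: given a contractive representation $(W,T)$ on $\clh$, there is a symbol $L \in \clb(\cld_{T^*}, \clf^2_n \otimes \cld_{T^*})$ with $\Pi_T W^* = W_L^* \Pi_T$, where $(\Pi_T, \clf^2_n \otimes \cld_{T^*})$ is the minimal dilation of $T$. Set $\clq_T = \Pi_T \clh$. From Theorem \ref{uni model} I already know $\clq_T$ is joint invariant under $(S_1^* \otimes I_{\cld_{T^*}}, \ldots, S_n^* \otimes I_{\cld_{T^*}})$, which gives the first batch of invariance conditions; and the identity $\Pi_T W^* = W_L^* \Pi_T$ shows $W_L^* \clq_T \subseteq \clq_T$, since for $x \in \clh$ we have $W_L^*(\Pi_T x) = \Pi_T (W^* x) \in \clq_T$. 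That takes care of the structural claims about $\clq_T$.

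Next I would verify the unitary equivalence $(W,T) \cong P_{Q_T}(W_L, S^{D_{T^*}})|_{Q_T}$. The natural candidate for the intertwining unitary is $\Pi_T : \clh \raro \clq_T$, which is unitary onto its range by construction. For the row operator part, this is exactly the content of Theorem \ref{uni model}: $\Pi_T T_i^* = (S_i \otimes I_{\cld_{T^*}})^* \Pi_T$, equivalently $T \cong P_{Q_T} S^{D_{T^*}}|_{Q_T}$ via $\Pi_T$. For the $W$-component, I would take adjoints in $\Pi_T W^* = W_L^* \Pi_T$ to get $W \Pi_T^* = \Pi_T^* W_L$, and then, using $\Pi_T \Pi_T^* = P_{\clq_T}$ and $\Pi_T^* \Pi_T = I_\clh$, conclude that $\Pi_T W = (P_{\clq_T} W_L|_{\clq_T}) \Pi_T$ after identifying $\clh$ with $\clq_T$; more precisely, compressing $W_L$ to the invariant (for $W_L^*$) subspace $\clq_T$ and conjugating by $\Pi_T$ recovers $W$. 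Since $W_L^* \clq_T \subseteq \clq_T$, the compression $P_{\clq_T} W_L|_{\clq_T}$ is a genuine restriction-type object and the equivalence assembles componentwise.

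The only mild subtlety is to make sure that the single unitary $\Pi_T$ simultaneously implements the equivalence for all $n+1$ operators $W, T_1, \ldots, T_n$ — but this is automatic because the two intertwining relations ($\Pi_T T_i^* = (S_i\otimes I)^*\Pi_T$ from Theorem \ref{uni model} and $\Pi_T W^* = W_L^*\Pi_T$ from Theorem \ref{thm dil lift}) both feature the same map $\Pi_T$. I would spell out that the pair $(P_{\clq_T}(W_L,S^{D_{T^*}})|_{\clq_T})$ is indeed a contractive representation of $O_n$ in its own right (it is a subrepresentation of a Fock representation, so this follows from the observations in the introduction, or directly from the invariance of $\clq_T$ under $W_L^*$ and the $S_i^*$). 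I do not expect any real obstacle here: the corollary is a repackaging, and the work was already done in Theorem \ref{thm dil lift}; the proof is three or four lines of adjoint-taking and identification of the model space.
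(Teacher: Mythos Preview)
Your proposal is correct and follows essentially the same approach as the paper: the paper does not give a separate proof of the corollary but simply states it as a consequence of Theorem~\ref{thm dil lift} together with Theorem~\ref{uni model} and the discussion preceding the corollary, which is exactly the argument you spell out (invariance of $\clq_T$ under $W_L^*$ and the $(S_i\otimes I)^*$ from the two intertwining relations, and the unitary equivalence implemented by the single isometry $\Pi_T$). Your write-up is, if anything, more explicit than the paper's.
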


This result is, on the one hand, dilation for contractive representations of $O_n$ in the sense of Sz.-Nagy and Foias \cite{NagFoi10}, and Popescu \cite{Popescue char}. On the other hand, it shares some (but not all) resemblance to Sarason's commutant lifting theorem \cite{Sarason} (and also to the noncommutative commutant lifting theorem). More specifically, in the present scenario, we are lifting the odometer maps acting on Hilbert spaces to the odometer maps acting on the Fock spaces (the minimal isometric dilation spaces). Let us see the relation between $\norm{W}$ and $\norm{W_L}$.
\noindent Let $(W, T)$ be a contractive representation of $O_n$. By Theorem \ref{thm dil lift} and its proof, we know that $(W_L, S^{\cld_{T^*}})$ is an odometer lift of $(W, T)$, where the symbol $L \in \clb(\cld_{T^*}, \clf^2_n \otimes \cld_{T^*})$ is given by
\[
L \eta = \Pi_{T} W \Pi^*_{T}(\Omega \otimes \eta),
\]
for all $\eta \in \cld_{T^*}$. As $W = \Pi_T^* W_L \Pi_T$ and $\Pi_T$ is an isometry, it follows that $\norm {W} \leq \norm{W_L}$. On the other hand, since $\norm{L} = \sup \{\norm {L \eta}: \eta \in \cle, \norm{\eta}=1\}$, we compute
\begin{align*}
\norm{W_L} & \leq 1+ \norm{L}
\\
& = 1+ \sup \{ \norm {\Pi_{T} W \Pi^*_{T}(\Omega \otimes \eta)}: \eta \in \cle, \norm{\eta}=1\}
\\
& \leq 1+ \norm{\Pi_{T}}\norm {W }\norm{\Pi^*_{T}}  \sup \{\|\Omega \otimes \eta\|: \eta \in \cle, \norm{\eta}=1\}
\\
& = 1+ \norm {W},
\end{align*}
as $\|\Pi_T\| = 1$. This implies that
\[
\norm{W} \leq  \norm {W_L} \leq 1+ \norm{W}.
\]

\section{Subrepresentations}\label{sec subrepr}

Now we turn to joint invariant subspaces of Fock representations, which will also be referred to as subrepresentations of the odometer semigroup. First, we introduce the concept of invariant subspaces in the traditional context and then extend it to invariant subspaces of Fock representations (or subrepresentations of $O_n$) in the present context.

\begin{definition}
Let $(W_L, S^\cle)$ be a Fock representation. A closed subspace $\cls \subseteq \clf^2_n \otimes \cle$ is referred to as an invariant subspace if
\[
(S_j \otimes I_\cle) \cls \subseteq \cls,
\]
for all $j=1, \ldots, n$. If, in addition, $W_L \cls \subseteq \cls$, then we call $\cls$ \textit{invariant under the Fock representation} $(W_L, S^\cle)$. We also say in this instance that $(W_L|_{\cls}, S^{\cle}|_{\cls})$ is a subrepresentation of the Fock representation $(W_L, S^\cle)$.
\end{definition}

In the definition above, the symbol $S^{\cle}|_{\cls}$ denotes the $n$-tuple on $\cls$, which is defined as follows:
\[
S^{\cle}|_{\cls} = ((S_1 \otimes I_\cle)|_{\cls}, \cdots, (S_n \otimes I_\cle)|_\cls).
\]
Recall that a bounded linear operator $\Phi: \clf^2_n \otimes \cle_* \raro \clf^2_n \otimes \cle$ is \textit{multi-analytic} \cite{Popescu 95} if
\begin{equation}\label{eqn multi analytic}
\Phi (S_j \otimes I_{\cle_*}) = (S_j \otimes I_{\cle}) \Phi,
\end{equation}
for all $j=1, \ldots, n$. Moreover, a multi-analytic operator $\Phi$, as above, is \textit{inner} if $\Phi$ is an isometry. Popescu's result \cite{Popescue char}, commonly referred to as the noncommutative Beurling-Lax-Halmos theorem, links up invariant subspaces with multi-analytic inner functions. To facilitate the subsequent construction, we will now offer a sketch of the proof of the result. The technique is standard and simply follows the lines of \cite[Theorem 2]{Frazho 84} and \cite[Theorem 2.2]{Popescue char}. Here the only distinction is in our emphasis on the factorizations of the multi-analytic operators, which will yield fresh perspectives that we will elaborate on shortly.

\begin{theorem}\label{Popescue multi-analytic}
Let $\cls \subseteq \clf^2_n \otimes \cle$ be a closed subspace. Then $\cls$ is an invariant subspace if and only if there exist a Hilbert space $\cle_*$ and an inner multi-analytic operator $\Phi: \clf^2_n \otimes \cle_* \raro \clf^2_n \otimes \cle$ such that
\[
\cls = \Phi(\clf^2_n \otimes \cle_*).
\]
Moreover, there exists a unitary $\Pi: \clf^2_n \otimes \cle_* \raro \cls$ such that $\Pi (S_i \otimes I_{\cle_*}) = (S_i \otimes I_{\cle}) \Pi$ for all $i=1, \ldots, n$, and
\[
\Phi = i_\cls \circ \Pi.
\]
\end{theorem}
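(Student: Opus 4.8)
The plan is to reconstruct the standard Wold-type / Beurling–Lax–Halmos argument for row isometries, following \cite[Theorem 2]{Frazho 84} and \cite[Theorem 2.2]{Popescue char}, but to keep careful track of the unitary $\Pi$ so that the factorization $\Phi = i_\cls \circ \Pi$ falls out explicitly. For the easy direction, suppose $\cls = \Phi(\clf^2_n \otimes \cle_*)$ for an inner multi-analytic $\Phi$. Then for each $j$, applying \eqref{eqn multi analytic} gives $(S_j \otimes I_\cle)\cls = (S_j \otimes I_\cle)\Phi(\clf^2_n \otimes \cle_*) = \Phi(S_j \otimes I_{\cle_*})(\clf^2_n \otimes \cle_*) \subseteq \Phi(\clf^2_n \otimes \cle_*) = \cls$, so $\cls$ is invariant.

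For the converse, assume $\cls$ is invariant under $S^\cle = (S_1 \otimes I_\cle, \ldots, S_n \otimes I_\cle)$. First I would observe that the restriction $R := S^\cle|_{\cls} = ((S_1 \otimes I_\cle)|_\cls, \ldots, (S_n \otimes I_\cle)|_\cls)$ is again a row isometry on $\cls$: indeed each $(S_i \otimes I_\cle)|_\cls$ is an isometry, and the orthogonality relations $(S_i \otimes I_\cle)^*(S_j \otimes I_\cle) = \delta_{ij} I$ descend to the invariant subspace, so $R_i^* R_j = \delta_{ij} I_\cls$. Next I would check that $R$ is pure: since $\sum_{|\mu|=m} (S_\mu \otimes I_\cle)(S_\mu \otimes I_\cle)^* = P_{(\mathbb{C}^n)^{\otimes \geq m} \otimes \cle}$ tends strongly to $0$, the same holds after compression to $\cls$, giving the limit condition \eqref{eqn pure} for $R$. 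By the noncommutative Wold decomposition (\cite[Theorem 1.3]{Popescue}, \cite[Theorem 2]{Frazho 84}), a pure row isometry is unitarily equivalent to $S^{\cle_*}$ on $\clf^2_n \otimes \cle_*$, where the wandering space is $\cle_* := \cls \ominus \sum_{j=1}^n R_j \cls = \cls \cap \bigcap_{j=1}^n \Ker R_j^*$. Concretely this produces a unitary $\Pi : \clf^2_n \otimes \cle_* \raro \cls$ intertwining $S^{\cle_*}$ with $R$, i.e. $\Pi(S_i \otimes I_{\cle_*}) = R_i \Pi = (S_i \otimes I_\cle)|_\cls \, \Pi$ for all $i$.

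Then I would set $\Phi := i_\cls \circ \Pi : \clf^2_n \otimes \cle_* \raro \clf^2_n \otimes \cle$, where $i_\cls$ is the isometric inclusion. Since $\Pi$ is unitary onto $\cls$ and $i_\cls$ is an isometry, $\Phi$ is an isometry with range exactly $i_\cls(\cls) = \cls$. To see $\Phi$ is multi-analytic, compute $\Phi(S_i \otimes I_{\cle_*}) = i_\cls \Pi (S_i \otimes I_{\cle_*}) = i_\cls (S_i \otimes I_\cle)|_\cls \Pi = (S_i \otimes I_\cle) i_\cls \Pi = (S_i \otimes I_\cle)\Phi$, where the third equality is just the statement that $i_\cls$ intertwines the restriction of $S_i \otimes I_\cle$ to $\cls$ with $S_i \otimes I_\cle$ on the ambient space. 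Hence $\Phi$ is inner multi-analytic, $\cls = \Phi(\clf^2_n \otimes \cle_*)$, and by construction $\Pi$ is the promised unitary with $\Pi(S_i \otimes I_{\cle_*}) = (S_i \otimes I_\cle)\Pi$ and $\Phi = i_\cls \circ \Pi$.

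The only genuinely nontrivial input is the noncommutative Wold decomposition for pure row isometries, which I would invoke as a black box from \cite{Popescue}; the rest is bookkeeping. The step to watch is making sure purity of $R$ is actually used (or noting that for a row isometry on a subspace of a Fock space purity is automatic), since the Wold decomposition identifies $\cls$ with a \emph{single} copy of $\clf^2_n \otimes \cle_*$ — with no leftover unitary part — precisely because of purity; this is what makes $\Phi$ surjective onto $\cls$ rather than merely an isometric embedding of a proper subspace. I expect no other obstacles: the intertwining computations for $\Phi$ are one-liners once $\Pi$ is in hand.
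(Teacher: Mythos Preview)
Your proposal is correct and follows essentially the same route as the paper: restrict $S^\cle$ to $\cls$ to obtain a pure row isometry, take the wandering subspace $\cle_* = \cls \ominus \sum_j (S_j \otimes I_\cle)\cls$, build the unitary $\Pi: \clf^2_n \otimes \cle_* \to \cls$ from the Wold decomposition, and set $\Phi = i_\cls \circ \Pi$. The only cosmetic difference is that the paper cites \cite[Theorem~2.1]{DP} for the wandering subspace property, whereas you verify purity directly via $\sum_{|\mu|=m}(S_\mu \otimes I_\cle)(S_\mu \otimes I_\cle)^* \to 0$; both yield the same decomposition of $\cls$.
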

\begin{proof}
If $\cls = \Phi(\clf^2_n \otimes \cle_*)$, then \eqref{eqn multi analytic} immediately implies that $\cls$ is an invariant subspace. For the reverse direction, for each $j=1, \ldots, n$, set $V_j = (S_j \otimes I_\cle)|_{\cls}$. Since $\cls$ is an invariant subspace, for all $i,j \in \{1, \ldots, n\}$, it follows that $V_i^* V_j = \delta_{ij}I_\cls$. As $\mathcal{S}$ is invariant under $S^{\cle}$, \cite[Theorem 2.1]{DP} implies that $\mathcal{S}$ has wandering subspace, say $\cle_*.$ Therefore $(V_1, \ldots, V_n)$ is pure row isometry on $\cls$, and we have the following orthogonal decomposition:
\[
\cls = \bigoplus_{\mu \in F_n^+} V_{\mu} \cle_* = \bigoplus_{\mu \in F_n^+} (S_\mu \otimes I_\cle) \cle_*,
\]
where
\[
\cle_* = \cls \ominus \sum_{i=1}^{n} (S_i \otimes I_\cle) \cls.
\]
Then
\[
\Pi(e_\mu \otimes \eta) = (S_\mu \otimes I_\cle)\eta \qquad(\mu \in F_n^+, \eta \in \cle_*),
\]
defines a unitary $\Pi: \clf^2_n \otimes \cle_* \raro \cls$. Moreover, it is easy to see that $\Pi (S_i \otimes I_{\cle_*}) = (S_i \otimes I_{\cle}) \Pi$ for all $i=1, \ldots, n$. Now we consider the inclusion map $i_\cls :\cls \hookrightarrow \clf^2_n \otimes \cle$, and define
\[
\Pi_\cls = i_\cls \circ \Pi \in \clb(\clf^2_n \otimes \cle_*, \clf^2_n \otimes \cle).
\]
Since $i_\cls$ is an isometry, it is evident that $\Pi_\cls$ is an isometry and $\Pi_\cls \Pi_\cls^* = P_\cls$. An easy computation shows that $\Pi_\cls (S_i \otimes I_{\cle_*}) = (S_i \otimes I_{\cle}) \Pi_\cls$ for all $i=1, \ldots, n$, and consequently, there exists an inner multi-analytic operator $\Phi \in \clb(\clf^2_n \otimes \cle_*, \clf^2_n \otimes \cle)$ such that
\[
\Phi = \Pi_\cls = i_\cls \circ \Pi \in \clb(\clf^2_n \otimes \cle_*, \clf^2_n \otimes \cle),
\]
and $\cls = \Phi(\clf^2_n \otimes \cle_*)$.
\end{proof}

The operator $\Phi$ is unique in the following sense: If $\cls = \Phi(\clf^2_n \otimes \cle_*)$ is an invariant subspace of $\clf^2_n \otimes \cle$ as above, and if $\cls = \tilde \Phi(\clf^2_n \otimes \tilde\cle_*)$ for some Hilbert space $\tilde\cle_*$ and inner multi-analytic operator $\tilde \Phi: \clf^2_n \otimes \tilde \cle_* \raro \clf^2_n \otimes \cle$, then there exists a unitary operator $\tau \in \mathcal{B}(\tilde{\cle_*}, \cle_*)$ such that
\[
\tilde{\Phi} = \Phi (I_{\clf^2_n} \otimes \tau).
\]
We again stress the factorization $\Phi = i_\cls \circ \Pi$ in the above theorem, which, in terms of the commutative diagram, yields the following: 

\setlength{\unitlength}{3mm}
\begin{center}
\begin{picture}(40,16)(0,0)
\put(15.7,3){$\cls$}\put(20,2.3){$i_\cls$}
\put(23.9,3){$\clf^2_n \otimes \cle$} \put(22, 10){$\clf^2_n \otimes \cle_*$} \put(24,9.2){ \vector(0,-1){5}} \put(22,
9.4){\vector(-1,-1){5.3}} \put(17,
3.4){\vector(1,0){5.9}}\put(18,7.2){$\Pi$}\put(24.7,6.5){$\Phi$}
\end{picture}
\end{center}
The way we factorize $\Phi$ will be significant in what comes next. And this is where our use of the noncommutative Beurling-Lax-Halmos theorem will differ from how it was used earlier. For instance:

\begin{lemma} \label{action of M_theta}
In the setting of Theorem \ref{Popescue multi-analytic}, suppose $\cls = \Phi(\clf^2_n \otimes \cle_*)$. For each $\mu \in F_n^+$ and $\eta \in \cle_*$, we have the following:
\begin{enumerate}
\item $\Phi(e_{\mu} \otimes \eta) = (S_\mu \otimes I_{\cle}) \eta$.
\item $\Phi^*(S_\mu \otimes I_{\cle}) \eta = e_{\mu} \otimes \eta$.
\end{enumerate}
\end{lemma}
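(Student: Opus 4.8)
The two identities are essentially a restatement of the explicit description of the unitary $\Pi$ constructed in the proof of Theorem \ref{Popescue multi-analytic}, combined with the factorization $\Phi = i_\cls \circ \Pi$. The plan is to unwind these definitions carefully and verify both parts directly on the orthonormal basis vectors $e_\mu \otimes \eta$, $\mu \in F_n^+$, $\eta \in \cle_*$.

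For part (1), I would start from the formula $\Pi(e_\mu \otimes \eta) = (S_\mu \otimes I_\cle)\eta$ established in the proof of Theorem \ref{Popescue multi-analytic}, where here the right-hand side is understood as an element of $\cls \subseteq \clf^2_n \otimes \cle$. Since $\Phi = i_\cls \circ \Pi$ and $i_\cls$ is simply the isometric inclusion of $\cls$ into $\clf^2_n \otimes \cle$, applying $i_\cls$ does nothing but regard $(S_\mu \otimes I_\cle)\eta$ as a vector in the larger space. Hence $\Phi(e_\mu \otimes \eta) = (S_\mu \otimes I_\cle)\eta$, which is exactly (1). One should note that $(S_\mu \otimes I_\cle)\eta = e_\mu \otimes \eta$ under the identification of $\cle_* \subseteq \clf^2_n \otimes \cle$ sitting in the vacuum component, but keeping the creation-operator notation is cleaner for subsequent use.

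For part (2), the idea is to use that $\Phi$ is an isometry together with part (1) and the multi-analyticity relation \eqref{eqn multi analytic}. Since $\Phi^*\Phi = I_{\clf^2_n \otimes \cle_*}$, part (1) gives $\Phi^*\big((S_\mu \otimes I_\cle)\eta\big) = \Phi^*\Phi(e_\mu \otimes \eta) = e_\mu \otimes \eta$, which is precisely (2). Alternatively, and perhaps more transparently, one can compute the adjoint action on general basis vectors: for any $\nu \in F_n^+$ and $\gamma \in \cle_*$,
\[
\langle \Phi^*(S_\mu \otimes I_\cle)\eta,\, e_\nu \otimes \gamma \rangle = \langle (S_\mu \otimes I_\cle)\eta,\, \Phi(e_\nu \otimes \gamma) \rangle = \langle (S_\mu \otimes I_\cle)\eta,\, (S_\nu \otimes I_\cle)\gamma \rangle,
\]
and this last inner product equals $\delta_{\mu\nu}\langle \eta, \gamma\rangle$ because the $V_j = (S_j \otimes I_\cle)|_\cls$ form a row isometry (so $V_\mu^* V_\nu = \delta_{\mu\nu} I_{\cle_*}$, using that $\cle_*$ is the wandering subspace); summing over the orthonormal basis yields $\Phi^*(S_\mu \otimes I_\cle)\eta = e_\mu \otimes \eta$.

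\textbf{Main obstacle.} There is no deep difficulty here; the statement is a bookkeeping corollary of Theorem \ref{Popescue multi-analytic}. The only point requiring a little care is making sure the identifications are consistent: the symbol $\eta$ appears both as an element of the abstract Hilbert space $\cle_*$ (the domain of $\Phi$, via $e_0 \otimes \eta$) and, through $\Pi$, as the corresponding vector in the wandering subspace $\cle_* = \cls \ominus \sum_i (S_i \otimes I_\cle)\cls \subseteq \clf^2_n \otimes \cle$. Once one fixes this identification and recalls that $V_\mu^* V_\nu = \delta_{\mu\nu} I$ on the row isometry $(V_1,\dots,V_n)$, both identities fall out immediately.
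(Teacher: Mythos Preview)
Your proof is correct and follows essentially the same approach as the paper: part (1) is obtained directly from the factorization $\Phi = i_\cls \circ \Pi$ together with the explicit formula $\Pi(e_\mu \otimes \eta) = (S_\mu \otimes I_\cle)\eta$, and part (2) is deduced from part (1) via $\Phi^*\Phi = I$. The paper's argument is identical, only terser; your alternative inner-product computation for (2) is a harmless elaboration.
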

\begin{proof}
By the factorization part of Theorem \ref{Popescue multi-analytic}, we know that $\Phi = i_\cls \circ \Pi$. Since $\cle_* = \cls \ominus \sum_{i=1}^{n} (S_i \otimes I_\cle) \cls
$, we have $\eta \in \cle_* \subseteq \cls$, so that
\[
(S_\mu \otimes I_{\cle}) \eta \in \cls,
\]
as $\cls$ is an invariant subspace. Then
\[
\Phi(e_{\mu} \otimes \eta) = i_\cls \circ \Pi (e_{\mu} \otimes \eta) = i_\cls (S_\mu \otimes I_{\cle}) \eta = (S_\mu \otimes I_{\cle}) \eta.
\]
The second identity can be easily derived from the fact that $\Phi$ is isometric.
\end{proof}

Now we are ready for the invariant subspaces of Fock representations. However, we must lay the right foundation. Let $\cls \subseteq \clf^2_n \otimes \cle$ be a closed subspace. First, assume that $\cls$ is an invariant subspace. By Theorem \ref{Popescue multi-analytic}, there exist a Hilbert space $\cle_*$ and an inner multi-analytic operator $\Phi \in \clb(\clf^2_n \otimes \cle_*, \clf^2_n \otimes \cle)$ such that
\[
\cls = \Phi (\clf^2_n \otimes \cle_*).
\]
Next, consider a Fock representation $(W_L, S^\cle)$. According to Douglas' range inclusion theorem \cite{Douglas}, $\cls$ is invariant under $(W_L, S^\cle)$ if and only if there is $C \in \clb(\clf^2_n \otimes \cle_*)$ such that
\[
W_L \Phi = \Phi C.
\]
If such a $C$ exists, it would be unique, and this simply follows from the fact that
\begin{equation}\label{eqn C}
C = \Phi^* W_L \Phi.
\end{equation}
The remaining task, therefore, along with its existence, is to determine the representation of the map $C$. This is what we do in the following theorem, which in particular asserts that $C$ is an odometer map and subsequently classifies invariant subspaces of Fock representations. 

\begin{theorem} \label{invariant subspace}
Let $\cle$ be a Hilbert space, $(W_L, S^\cle)$ be a Fock representation, and let $\cls \subseteq \clf^2_n \otimes \cle$ be a closed subspace. Then $\cls$ is invariant under $(W_L, S^\cle)$ if and only if there exist a Hilbert space $\cle_*$, an inner multi-analytic operator $\Phi \in \clb(\clf^2_n \otimes \cle_*, \clf^2_n \otimes \cle)$, and a Fock representation $(W_{L_*}, S^{\cle_*})$ such that
\[
\cls = \Phi (\clf^2_n \otimes \cle_*),
\]
and
\[
W_L \Phi = \Phi W_{L_*}.
\]
\end{theorem}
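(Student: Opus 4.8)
The plan is to handle the two implications separately. The reverse implication is immediate: if $\cls = \Phi(\clf^2_n \otimes \cle_*)$ for an inner multi-analytic operator $\Phi$ and a Fock representation $(W_{L_*}, S^{\cle_*})$ with $W_L \Phi = \Phi W_{L_*}$, then the intertwining relations \eqref{eqn multi analytic} satisfied by $\Phi$ already force $(S_j \otimes I_\cle)\cls \subseteq \cls$ for all $j$, while $W_L \cls = W_L \Phi(\clf^2_n \otimes \cle_*) = \Phi W_{L_*}(\clf^2_n \otimes \cle_*) \subseteq \Phi(\clf^2_n \otimes \cle_*) = \cls$; hence $\cls$ is invariant under $(W_L, S^\cle)$.

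For the forward implication, assume $\cls$ is invariant under $(W_L, S^\cle)$. Since $\cls$ is in particular invariant under $S^\cle$, Theorem \ref{Popescue multi-analytic} supplies the wandering subspace $\cle_* = \cls \ominus \sum_{i=1}^{n} (S_i \otimes I_\cle)\cls$, a unitary $\Pi : \clf^2_n \otimes \cle_* \raro \cls$ intertwining the two creation tuples, and the inner multi-analytic operator $\Phi = i_\cls \circ \Pi$ with $\cls = \Phi(\clf^2_n \otimes \cle_*)$. The extra hypothesis $W_L \cls \subseteq \cls$ says that the range of $W_L \Phi$ is contained in $\cls = \Phi(\clf^2_n \otimes \cle_*)$, so Douglas' range inclusion theorem produces an operator $C \in \clb(\clf^2_n \otimes \cle_*)$ with $W_L \Phi = \Phi C$, and $C$ is unique since $\Phi$ is an isometry; explicitly $C = \Phi^* W_L \Phi$.

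It then remains to recognize $C$ as an odometer map. I would verify directly that $(C, S^{\cle_*})$ satisfies the defining relations \eqref{eqn def OS}, after which Theorem \ref{Prop: Odometer operator} yields a unique symbol $L_* \in \clb(\cle_*, \clf^2_n \otimes \cle_*)$ with $C = W_{L_*}$, and the identity $W_L \Phi = \Phi C = \Phi W_{L_*}$ finishes the proof. For $k < n$, combining $\Phi(S_k \otimes I_{\cle_*}) = (S_k \otimes I_\cle)\Phi$ with the odometer relation $W_L(S_k \otimes I_\cle) = S_{k+1}\otimes I_\cle$ gives $\Phi\, C(S_k \otimes I_{\cle_*}) = W_L(S_k \otimes I_\cle)\Phi = (S_{k+1}\otimes I_\cle)\Phi = \Phi(S_{k+1}\otimes I_{\cle_*})$, and cancelling the isometry $\Phi$ on the left gives $C(S_k \otimes I_{\cle_*}) = S_{k+1}\otimes I_{\cle_*}$; for $k=n$ the same computation with $W_L(S_n \otimes I_\cle) = (S_1 \otimes I_\cle)W_L$ gives $\Phi\, C(S_n \otimes I_{\cle_*}) = (S_1 \otimes I_\cle)W_L\Phi = (S_1 \otimes I_\cle)\Phi C = \Phi(S_1 \otimes I_{\cle_*})C$, hence $C(S_n \otimes I_{\cle_*}) = (S_1 \otimes I_{\cle_*})C$. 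Finally, from $C = W_{L_*}$ one reads off $L_* \eta = C(\Omega \otimes \eta)$; since $\Phi(\Omega \otimes \eta) = i_\cls(\Pi(\Omega\otimes\eta)) = \eta$ for $\eta \in \cle_* \subseteq \cls$, this becomes $L_* = \Pi^* W_L|_{\cle_*}$.

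I do not expect a serious obstacle here: once the three inputs are available --- the factorized noncommutative Beurling--Lax--Halmos theorem (Theorem \ref{Popescue multi-analytic}), Douglas' theorem, and the classification of Fock representations (Theorem \ref{Prop: Odometer operator}) --- the argument is a diagram chase that hinges only on $\Phi$ being an isometry that intertwines the creation tuples. The one point requiring care is pinning down the concrete symbol $L_* = \Pi^* W_L|_{\cle_*}$: this uses the explicit factorization $\Phi = i_\cls \circ \Pi$ from Theorem \ref{Popescue multi-analytic}, not merely the existence of an inner multi-analytic $\Phi$, which is precisely the refinement of that theorem highlighted earlier in the paper.
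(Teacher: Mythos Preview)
Your proof is correct, and in fact cleaner than the paper's. Both arguments share the same scaffolding --- the reverse implication is immediate, and for the forward direction both invoke Theorem \ref{Popescue multi-analytic} to produce $\cle_*$, $\Pi$, $\Phi = i_\cls \circ \Pi$, then use Douglas' theorem to obtain $C = \Phi^* W_L \Phi$ with $W_L \Phi = \Phi C$ --- but they diverge at the key step of identifying $C$ as an odometer map. The paper first \emph{defines} $L_* = \Pi^* W_L|_{\cle_*}$, then verifies $\Phi^* W_L \Phi = W_{L_*}$ by a case-by-case check on basis vectors $e_k \otimes e_\mu \otimes \eta$ (for $k<n$), $e_n \otimes \eta$, and $e_n \otimes e_\mu \otimes \eta$, repeatedly invoking Lemma \ref{action of M_theta} and the orthogonal decomposition of $\cls$. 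You instead work purely at the operator level: from the multi-analytic intertwining $\Phi(S_k\otimes I_{\cle_*}) = (S_k\otimes I_\cle)\Phi$ and the odometer relations for $W_L$ you read off directly that $(C, S^{\cle_*})$ itself satisfies \eqref{eqn def OS}, and then Theorem \ref{Prop: Odometer operator} supplies the symbol $L_*$ automatically. Your route is shorter and more conceptual --- it bypasses Lemma \ref{action of M_theta} entirely and makes transparent why the result holds --- while the paper's basis computation has the minor advantage of exhibiting $L_*$ from the outset rather than recovering it at the end.
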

\begin{proof}
Following Theorem \ref{Popescue multi-analytic} and what we have discussed preceding the statement of this theorem, all that is left is to show that there is an $L_* \in \clb(\cle_*, \clf^2_n \otimes \cle_*)$ such that $W_L \Phi = \Phi W_{L_*}$, where $\cls$ is the invariant subspace of $\clf^2_n \otimes \cle$ admitting inner multi-analytic representation $\cls = \Phi (\clf^2_n \otimes \cle_*)$ and $W_L \cls \subseteq \cls$. By \eqref{eqn C}, we already know that there is a $C \in \clb(\clf^2_n \otimes \cle_*)$ such that
\[
C = \Phi^* W_L \Phi.
\]
We need to prove that $C = W_{L_*}$ for some $L_* \in \clb(\cle_*, \clf^2_n \otimes \cle_*)$. We now start to build the map $L_*$. It is crucial to recall from the proof of Theorem \ref{Popescue multi-analytic} that
\[
\cle_* = \cls \ominus \sum_{i=1}^{n} (S_i \otimes I_\cle) \cls.
\]
Recall also that the inner multi-analytic operator $\Phi$ factors as (see \eqref{eqn C})
\[
\Phi = i_\cls \circ \Pi \in \clb(\clf^2_n \otimes \cle_*, \clf^2_n \otimes \cle),
\]
where $i_\cls :\cls \hookrightarrow \clf^2_n \otimes \cle$ is the inclusion map, and $\Pi: \clf^2_n \otimes \cle_* \raro \cls$ is the unitary operator defined by
\[
\Pi(e_\mu \otimes \eta) = (S_\mu \otimes I_\cle)\eta,
\]
for all $\mu \in F_n^+$ and $\eta \in \cle_*$. Also recall that $W_L \in \clb(\clf^2_n \otimes \cle)$. As
\[
\cle_* \subseteq \cls \subseteq \clf^2_n \otimes \cle,
\]
it follows that $W_L|_{\cle_*} : \cle_* \raro \clf^2_n \otimes \cle$ is a well-defined bounded linear operator. Moreover, since $W_L \cls \subseteq \cls$, we conclude that
\[
W_L|_{\cle_*}: \cle_* \raro \cls.
\]
Finally, we define the symbol $L_* \in \clb(\cle_*, \clf^2_n \otimes \cle_*)$ by
\[
L_* = \Pi^* W_L|_{\cle_*}.
\]
Therefore, $(W_{L_*}, S^{\cle_*})$ on $\clf^2_n \otimes \cle_*$ is a Fock representation. It is now enough to prove that
\[
\Phi^* W_L \Phi = W_{L_*}.
\]
To this end, fix $\eta \in \cle_*$ and $\mu \in F_n^+$. Assume that $k < n$. By applying Lemma \ref{action of M_theta} twice, we obtain
\[
\begin{split}
\Phi^* W_L \Phi(e_{k}\otimes e_{\mu} \otimes \eta) & = \Phi^*  W_{L} (S_{k} \otimes I_{\cle_*}) ( S_{\mu} \otimes I_{\cle_*}) \eta
\\
& = \Phi^* (S_{k+1} \otimes I_{\cle_*}) (S_\mu  \otimes I_{\cle_*}) \eta
\\
& = e_{k+1}\otimes  e_\mu  \otimes \eta.
\end{split}
\]
Moreover, since $W_{L_*} (e_{k}\otimes e_{\mu} \otimes \eta) = e_{k+1}\otimes  e_\mu  \otimes \eta$, we conclude that
\[
\Phi^* W_L \Phi(e_{k}\otimes e_{\mu} \otimes \eta) = W_{L_*} (e_{k}\otimes e_{\mu} \otimes \eta),
\]
for all $k <n$. Next, consider the basis element $e_n \otimes \eta$. Again, Lemma \ref{action of M_theta} implies
\[
\Phi^* W_L \Phi \left(e_{n} \otimes \eta\right)	= \Phi^* W_L (S_n \otimes I_{\cle}) \eta = \Phi^* (S_1 \otimes I_{\cle}) W_{L} (\eta).
\]
As $\eta \in \cle_* \subseteq \cls$ and $W_{L} \cls \subseteq \cls$, it follows that ${W_L} (\eta) \in \cls$. By using the orthogonal direct sum decomposition of $\cls$, we write
\[
W_{L}(\eta)=\sum_{\alpha} c_{\alpha} (S_{\alpha} \otimes I_{\cle}) \eta_{\alpha},
\]
where $\eta_{\alpha} \in \cle_{*} \subseteq \cls$ for all $\alpha \in F_n^+$. Then $\Pi^* W_L (\eta)=\sum_{\alpha} c_{\alpha} e_\alpha\otimes \eta_{\alpha}$, and consequently
\[
\begin{split}
\Phi^* (S_1 \otimes I_{\cle}) W_{L} (\eta) & = \Phi^* (S_1 \otimes I_{\cle}) \Big(\sum_{\alpha} c_{\alpha} (S_\alpha \otimes I_{\cle})  \eta_{\alpha} \Big)
\\
& =  e_{1} \otimes \sum_{\alpha} c_{\alpha} e_\alpha\otimes \eta_{\alpha}
\\
& = e_{1} \otimes \Pi^* W_L(\eta),
\end{split}
\]
which, along with $ e_{1} \otimes \Pi^* W_L(\eta) = W_{L_*}(e_n \otimes \eta)$ implies that
\[
{\Phi}^* W_L \Phi (e_n \otimes \eta) = W_{L_*}(e_n \otimes \eta).
\]
Finally, we consider the basis vector $e_n \otimes e_\mu \otimes \eta \in \clf^2_n \otimes \eta_*$. Lemma \ref{action of M_theta} again implies that
\[
\begin{split}
\Phi^{*} W_{L} \Phi(e_{n} \otimes e_{\mu} \otimes \eta) = \Phi^* W_{L}(S_n  \otimes I_{\cle}) (S_\mu  \otimes I_{\cle}) \eta = \Phi^{*}(S_1  \otimes I_\cle) W_L (S_\mu  \otimes I_{\cle}) \eta.
\end{split}
\]
Depending on the symbol $\mu$, the definition of odometer relations and Lemma \ref{action of M_theta} implies that
\[
\Phi^{*} W_{L} \Phi(e_{n} \otimes e_{\mu} \otimes \eta) = W_{L_*}(e_n \otimes e_{\mu} \otimes \eta).
\]
This says $\Phi^* W_L \Phi$ and $W_{L_*}$ agree on the orthonormal basis vectors, and hence $\Phi^* W_L \Phi = W_{L_*}$, completing the proof of the theorem.
\end{proof}

The formulation of the problem of invariant subspaces of Fock representations is comparable to a similar problem in a different context, which is described in [2] in answer to a question by J. Agler and N. Young.

The following corollary is now easy and follows the lines of the classical Beurling theorem:

\begin{cor}\label{cor Unit equiv Fock rep}
Let $\cls$ be an invariant subspace of a Fock representation $(W_L, S^\cle)$. Then there exists a Fock representation $(W_{L_*}, S^{\cle_*})$ such that
\[
(W_L|_{\cls}, S^\cle|_{\cls}) \cong (W_{L_*}, S^{\cle_*}).
\]
\end{cor}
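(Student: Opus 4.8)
The plan is to deduce Corollary \ref{cor Unit equiv Fock rep} directly from Theorem \ref{invariant subspace} with essentially no new work. The key observation is that the inner multi-analytic operator $\Phi$ supplied by Theorem \ref{invariant subspace} factors as $\Phi = i_\cls \circ \Pi$, where $i_\cls : \cls \hookrightarrow \clf^2_n \otimes \cle$ is the isometric inclusion and $\Pi : \clf^2_n \otimes \cle_* \raro \cls$ is a \emph{unitary} satisfying the intertwining relations $\Pi(S_i \otimes I_{\cle_*}) = (S_i \otimes I_\cle)|_\cls \,\Pi$ for all $i = 1, \ldots, n$; this is exactly the content of the factorization part of Theorem \ref{Popescue multi-analytic}, which underlies Theorem \ref{invariant subspace}. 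So $\Pi$ is the candidate unitary implementing the claimed equivalence.

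First I would invoke Theorem \ref{invariant subspace} to obtain the Hilbert space $\cle_*$, the symbol $L_* \in \clb(\cle_*, \clf^2_n \otimes \cle_*)$, and the factorization $\Phi = i_\cls \circ \Pi$ with $\cls = \Phi(\clf^2_n \otimes \cle_*)$ and $W_L \Phi = \Phi W_{L_*}$. Since $\Phi$ is an isometry with range $\cls$ and $\Phi = i_\cls \circ \Pi$, the unitary $\Pi$ is precisely the corestriction of $\Phi$ to its range $\cls$; equivalently, viewing $i_\cls$ as the identification of $\cls$ with a subspace, $\Pi$ is the ``same'' map as $\Phi$ but with codomain $\cls$. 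Next I would verify the two intertwining identities. For the creation operators, $(S_i \otimes I_\cle)|_\cls\, \Pi = i_\cls^*\, (S_i \otimes I_\cle)\, i_\cls\, \Pi = i_\cls^*\, (S_i \otimes I_\cle) \Phi = i_\cls^*\, \Phi\, (S_i \otimes I_{\cle_*}) = \Pi\, (S_i \otimes I_{\cle_*})$, where $(S_i \otimes I_\cle)\cls \subseteq \cls$ is used to pass between $(S_i \otimes I_\cle)|_\cls$ and its ambient version, and $i_\cls^* \Phi = i_\cls^* i_\cls \Pi = \Pi$. For the odometer part, $W_L|_\cls\, \Pi = i_\cls^* W_L\, i_\cls\, \Pi = i_\cls^* W_L \Phi = i_\cls^* \Phi W_{L_*} = \Pi W_{L_*}$, using $W_L \cls \subseteq \cls$. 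Hence $\Pi$ intertwines the subrepresentation $(W_L|_\cls, S^\cle|_\cls)$ with the Fock representation $(W_{L_*}, S^{\cle_*})$, and since $\Pi$ is unitary this gives $(W_L|_\cls, S^\cle|_\cls) \cong (W_{L_*}, S^{\cle_*})$, as required.

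There is no substantial obstacle here; the entire point of having proved Theorem \ref{invariant subspace} in the refined form (with the explicit factorization $\Phi = i_\cls \circ \Pi$ rather than just the abstract existence of $\Phi$) was to make this corollary immediate. The only thing to be slightly careful about is keeping straight the distinction between an operator acting on $\clf^2_n \otimes \cle$ and its compression/restriction to the invariant subspace $\cls$: because $\cls$ is invariant under both $S^\cle$ and $W_L$, restriction and compression agree on $\cls$, so $i_\cls^* A\, i_\cls = A|_\cls$ whenever $A \cls \subseteq \cls$, and the cancellations above are legitimate. One could equally phrase the argument by simply noting that the relation $W_L \Phi = \Phi W_{L_*}$ together with $\Phi$ mapping $\clf^2_n \otimes \cle_*$ unitarily onto $\cls$ is, by definition, unitary equivalence of the two representations; the bookkeeping with $i_\cls$ and $\Pi$ is only there to make the unitary explicit.
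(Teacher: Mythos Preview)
Your proposal is correct and is essentially the same argument as the paper's: both invoke Theorem \ref{invariant subspace} and then take the corestriction of $\Phi$ to its range $\cls$ as the unitary implementing the equivalence (the paper calls it $U$, you call it $\Pi$), with the intertwining relations read off from $W_L\Phi = \Phi W_{L_*}$ and the multi-analytic property of $\Phi$. Your explicit bookkeeping with $i_\cls$ and $i_\cls^*$ is a bit more detailed than the paper's one-line ``Evidently,'' but the content is identical.
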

\begin{proof}
We know that there is a Hilbert space $\cle_*$, an inner multi-analytic operator $\Phi \in \clb(\clf^2_n \otimes \cle_*, \clf^2_n \otimes \cle)$, and a symbol $L_* \in \clb(\cle_*, \clf^2_n \otimes \cle_*)$ such that $\cls = \Phi (\clf^2_n \otimes \cle_*)$ and $W_L \Phi = \Phi W_{L_*}$. Then
\[
U (f \otimes \eta) = \Phi(f \otimes \eta) \qquad (f \in \clf^2_n, \eta \in \cle_*),
\]
defines a unitary $U: \clf^2_n \otimes \cle_* \raro \cls$. Evidently, $U W_{L_*} = W_L|_{\cls} U$ and $U (S_i \otimes I_{\cle_*}) = (S_i \otimes I_{\cle})|_{\cls} U$ for all $i=1, \ldots, n$. This completes the proof of the corollary.
\end{proof}

Therefore, up to unitary equivalence, subrepresentations of Fock representations are also Fock representations.

\section{Examples}\label{sec examples}

In this concluding section, we aim to present some examples of representations of $O_n$ and compute the spectrum of unitary odometer operators.

\subsection{Nica covariant} We begin with a simple example of Nica covariant representation of $O_n$.

\begin{example}
Let $\mathcal{H}$ be Hilbert space, $\{e_{n}: n\in \Z_+\}$ be an orthonormal basis for $\clh$, and let $q \in \T$. Define isometries $V_{1}$, $V_{2}$ and $W$ on $\clh$ as follows:
\[
V_1(e_{k}) = \overline{q}^{2k}e_{2k}, \; V_2(e_{k}) = \overline{q}^{2k+1}e_{2k+1},
\]
and
\[
W(e_{k}) = \overline{q}e_{k+1},
\]
for all $k \in \Z_+$. It is easy to check that $V=(V_{1}, V_{2})$ is a row isometry. Moreover, $WV_{1}=V_{2}$ and $W^{*}V_{1}=\overline{q} V_{2}W^*$, which also implies that $WV_{2}=q V_{1}W$. If $q=1$, then $(W,V)$ becomes a Nica covariant representation of $O_n$.
\end{example}

Next, we present an example of an isometric Fock representation, which is also a weak bi-shift. In other words, in the following, given a Hilbert space $\cle$, we will provide an explicit example of a map $L \in \clb(\cle, \clf^2_n \otimes \cle)$ such that $W_L$ is an isometry on $\clf^2_n \otimes \cle$. In the following, $\Lambda$ is either a finite set or $\Z_+$.

\begin{example}\label{example vector Fock rep}
Let $\cle$ be a Hilbert space, and let $\{h_p\}_{p \in \Lambda}$ be an orthonormal basis for $\cle$. For each $m\in \Lambda$, define
\[
\mathcal{E}_m=\text{span}\{h_m\},
\]
and
\[
K_m= \overline{\text{span~}} \{e_{\mu} \otimes h_m: \mu \in F^+_n\}.
\]
Then
\begin{equation*}
    \begin{split}
        \mathcal{F}_n^2 \otimes \mathcal{E} &=\bigoplus_{m\in \Lambda}\bigoplus_{\mu \in \mathcal{F}_n^+} K_m.
    \end{split}
\end{equation*}
Define $L_m:\cle_m\to K_m$ by
$$
L_m(\lambda h_m)=\lambda e_1^{\otimes m}\otimes h_m,
$$
for all $h_m \in \cle_m$ and $\lambda\in \C$. Each $L_m$ is clearly an isometry and
\[
\text{ran} L_m \subseteq \mathcal{E}_{L_m}.
\]
Therefore, by Theorem \ref{ W_L iso iff L}, $(W_{L_m},S^{\mathcal{E}_m})$ is an isometric
representation of $O_n$ on $K_m$. Now, we have $W_{L_m} (S_n \otimes I_{\cle})^*(\Omega \otimes h_m)=0$ and
\[
(S_1 \otimes I_{\cle})^* W_{L_m} (\Omega \otimes h_m)=(S_1 \otimes I_{\cle})^*(e_1^{\otimes m} \otimes h_m)= e_1^{\otimes (m-1)} \otimes h_m,
\]
for all $h_m \in \cle_m$. That is, $(W_{L_m}, S^{\cle_m})$ is not a Nica-covariant representation. By \cite[Theorem 3.10]{Boyu Li22}, we write, $K_m= {\clh}^m_{us} \oplus {\clh}^m_{ws}$. We know that
$$
{\clh}^m_{us} = \bigoplus_{\mu \in F^+_n} \left( S_{\mu} \otimes I_{\cle_m}\right) \left( \bigcap_{i=1}^{n} \ker (S_i \otimes I_{\cle_m})^* \cap {\clh}^{W_m}_{u} \right).
$$
Clearly,
\[
\bigcap_{i=1}^{n} \ker (S_i \otimes I_{\cle_m})^*=\Omega \otimes \cle_m \subseteq \ker W^*_m,
\]
which implies $\Omega \otimes \cle_m \cap {\clh}^{W_m}_{u} =0$. Therefore, $\clh^m_{us}=0,$ and  $\clh^m_{ws}=K_m$. Consequently, $(W_{L_m}, S^{\cle_m})$ is a weak bi-shift. Finally, set
\[
L:=\bigoplus_{m\in \Lambda} L_m.
\]
Then $(W_L, S^{\cle})$ is a weak bi-shift, where
\[
W_L= \bigoplus_{m\in \Lambda} W_{L_m} \in \mathcal{B}(\mathcal{F}_n^2\otimes \mathcal{E}),
\]
and
\[
S^{\mathcal{E}}= \bigoplus_{m\in \Lambda} S^{{\mathcal{E}}_m}.
\]
Also as each \( (W_{L_m}, S^{\mathcal{E}_m}) \) is an isometric representation, it follows that \( (W_L, S^{\cle}) \) is also an isometric representation of \( O_n \).
\end{example}

The example above suggests that Li's construction \cite{Boyu Li22} cannot be extended beyond unitary Fock representations of $O_n$. In the latter scenario, the symbol represents a mapping from $\cle$ to itself.

In the following example, we highlight, in the context of Theorem \ref{Thm: Nica covariant characterization}, that Nica covariant representations $(W_L, S^\cle)$ exist such that $W_L$ is isometric but not unitary:

\begin{example}
Let $\cle$ be a Hilbert space and let $\{h_p\}_{p \in \Z_+}$ be an orthonormal basis for $\cle$. Define $L \in \mathcal{B}(\cle,\clf^2_n \otimes \cle)$ by
\[
L(h_p) = \Omega \otimes h_{p+1} \qquad (p \in \Z_+).
\]
Clearly, $L$ is an isometry (in fact, it is a kind of shift operator). It is easy to see that $(W_L, S^\cle)$ is a Nica covariant representation. However, $W_L$ is not a unitary. Also,  applying  similar arguments as in the converse of Theorem \ref{Thm: Nica covariant characterization}, it follows that $(W_L, S^{\mathcal{E}})$ is the left regular representation of the semigroup $O_n$.
 \end{example}

\subsection{Fock represenations} It makes sense to wonder if there are examples of nontrivial sequences that meet the conditions of Corollary \ref{cor scalar iso Fock rep}. In order to address this, we first consider a sequence of scalars $\{c_{p}: p\in \mathbb{Z}_{+}\}$ that satisfies the conditions of Corollary \ref{cor scalar iso Fock rep}. Assuming $\{c_{p}: p\in \mathbb{Z}_{+}\}$ is a finite sequence of nonzero numbers, by an obvious deduction, there exists $m \in \Z_+$ such that
\[
c_p = 0 \qquad(p \neq m).
\]
Therefore, we need to focus on nontrivial sequences. With this view in mind, now we illustrate Corollary \ref{cor scalar iso Fock rep} with a concrete example.

\begin{example}\label{example sequence}
Consider the quadratic equation
\[
x^2 - x - 1 = 0.
\]
Clearly
\[
\omega= \frac{1-\sqrt{5}}{2},
\]
is a solution to the above equation. Construct a sequence $\{c_{p}\}_{p\in \mathbb{Z}_{+}}$ by defining
\[
c_p =
\begin{cases}
\sqrt{ \frac{2}{\sqrt{5}+3}} & \mbox{if } p = 0 \\
\sqrt{ \frac{2}{\sqrt{5}+3}}\omega^{p-1} & \mbox{if } p \geq 1.
\end{cases}
\]
First, we compute
\begin{align*}
\sum_{p=0}^{\infty}\vert c_{p}\vert^2 &= \frac{2}{\sqrt{5}+3} \Big(1+ \sum_{p=1}^{\infty}\omega^{2(p-1)} \Big)
\\
&=\frac{2}{\sqrt{5}+3} \Big(1+ \frac{1}{1-\omega^2}\Big)
\\
&=\frac{2}{\sqrt{5}+3} \Big( \frac{2- \omega^2}{1-\omega^2}\Big)
\\
&=\frac{2}{\sqrt{5}+3} \Big( \frac{1- \omega}{-\omega}\Big) \qquad (\text{as~} 1+ \omega- \omega^2=0)
\\
&=\frac{2}{\sqrt{5}+3} \Big( \frac{\sqrt{5}+1}{\sqrt{5}-1}\Big)
\\
&=1.
\end{align*}
Moreover, for each $r\geq 1,$  we have
\begin{align*}
\sum_{p=0}^{\infty} c_{p+r}\overline{c_{p}}& = \frac{2}{\sqrt{5}+3} \Big( \omega^{r-1} + \sum_{p=1}^{\infty} \omega^{p + r-1} \omega^{p-1} \Big)
\\
&= \frac{2}{\sqrt{5}+3} \Big( \omega^{r-1}+\sum_{p=1}^{\infty} \omega^{2p+r-2} \Big)
\\
&= \frac{2}{\sqrt{5}+3} \Big( \omega^{r-1}+ \omega^{r}\sum_{p=1}^{\infty} \omega^{2p-2} \Big)\\
&= \frac{2}{\sqrt{5}+3} \Big( \omega^{r-1}+  \frac{\omega^{r}}{1- \omega^2} \Big)\\
&= \frac{2}{\sqrt{5}+3}  \omega^{r-1}\Big(  \frac{1- \omega^2+\omega}{1- \omega^2} \Big).
\end{align*}
Since $\omega^2 - \omega - 1=0$, we conclude that
\[
\sum_{p=0}^{\infty} c_{p+r}\overline{c_{p}} = 0,
\]
for all $r \geq 1$. This is condition \eqref{eqn E_L}. If we define
\[
\xi=\sum^{\infty}_{p=0} c_{p}e^{\otimes p}_{1},
\]
then $W_{\xi}$ is an isometry, as the sequence $\{c_{p}\}_{p\in \mathbb{Z}_{+}}$ meets all of Corollary \ref{cor scalar iso Fock rep}'s specifications. Hence there are nontrivial examples of isometric Fock representations of $O_n$.
\end{example}

The preceding example indicates once more that the symbol of non-unitary Fock representations of $O_n$, even at the level of the scalar case, does not have to be a self-map, unlike how it appeared in Li's construction \cite{Boyu Li22} of unitary Fock representations of $O_n$.
	
\subsection {$\sigma(W)$} Theorem \ref{W_L uni} gives a necessary and sufficient condition for $W_L$ to become unitary. This motivates our interest in determining the spectrum of the operator $W_L$.

\begin{prop}\label{prop: spect}
Suppose $(W,S^{\mathcal{E}})$ is a unitary Fock representation of $O_n$. Then $\sigma(W)=\T$.
\end{prop}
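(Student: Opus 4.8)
The plan is to combine the explicit description of unitary Fock representations supplied by Theorem~\ref{W_L uni} with a direct construction of approximate eigenvectors. By Theorem~\ref{W_L uni}, a unitary Fock representation has the form $(W_L,S^{\cle})$ with $L\in\clb(\cle,\clf^2_n\otimes\cle)$ an isometry satisfying $L\cle=\Omega\otimes\cle$; in particular $L$ is a constant symbol, and after identifying $\Omega\otimes\cle$ with $\cle$ we regard $L$ as a unitary operator on $\cle$. Since $W=W_L$ is unitary, $\sigma(W)\subseteq\T$ holds automatically, so the entire content of the statement is the reverse inclusion $\T\subseteq\sigma(W_L)$. I will work under the assumption $n\ge 2$; for $n=1$ the formula in Definition~\ref{def odo map} degenerates to $W_L(e_1^{\otimes m}\otimes\eta)=e_1^{\otimes m}\otimes L\eta$, i.e.\ $W_L=I_{\clf^2_1}\otimes L$, so there is nothing of this flavour to prove there.

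The first step is to read off the action of $W_L$ on words of a fixed length. Because $L$ is constant, the formula in Definition~\ref{def odo map} shows that $W_L$ leaves $(\C^n)^{\otimes m}\otimes\cle$ invariant for every $m$, and that on the $n^m$ basis vectors of $(\C^n)^{\otimes m}$ it is the base-$n$ adding machine: enumerating these vectors as $f_0,\dots,f_{n^m-1}$ in odometer order (so $f_0=e_1^{\otimes m}$ and $f_{n^m-1}=e_n^{\otimes m}$), we have $W_L(f_j\otimes\eta)=f_{j+1}\otimes\eta$ for $0\le j\le n^m-2$ and $W_L(f_{n^m-1}\otimes\eta)=f_0\otimes L\eta$, for all $\eta\in\cle$. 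Now fix $\lambda\in\T$; for each $m\ge1$ pick an $n^m$-th root of unity $\omega_m$ nearest to $\lambda$, so $|\omega_m-\lambda|\le\pi/n^m$, fix a unit vector $\eta_0\in\cle$, and set $v_m=\sum_{j=0}^{n^m-1}\omega_m^{-j}(f_j\otimes\eta_0)$, so that $\norm{v_m}=\sqrt{n^m}$. A short computation, in which the ``bulk'' terms telescope and cancel against $\omega_m v_m$ while only the wrap-around contribution survives, gives
\[ W_L v_m-\omega_m v_m=\omega_m\bigl(f_0\otimes(L\eta_0-\eta_0)\bigr), \]
whence $\norm{W_L v_m-\omega_m v_m}=\norm{L\eta_0-\eta_0}\le2$. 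Therefore, with $\hat v_m=v_m/\norm{v_m}$,
\[ \norm{W_L\hat v_m-\lambda\hat v_m}\le\frac{2}{\sqrt{n^m}}+|\omega_m-\lambda|\le\frac{2}{\sqrt{n^m}}+\frac{\pi}{n^m}, \]
which tends to $0$ as $m\to\infty$. Hence $\lambda$ lies in the approximate point spectrum of $W_L$, and so $\lambda\in\sigma(W_L)$; since $\lambda\in\T$ was arbitrary, $\T\subseteq\sigma(W_L)$, and combined with $\sigma(W_L)\subseteq\T$ this gives $\sigma(W)=\T$.

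I expect the only delicate point to be the bookkeeping in the second step: verifying that $W_L$ really cycles the length-$m$ words with a single twist by $L$ (this is exactly where the odometer relations and the constancy $L\cle=\Omega\otimes\cle$ are both used), and then tracking the index shift in $W_L v_m$ correctly. Everything after that is the elementary norm estimate above together with the fact that the $n^m$-th roots of unity become $2\pi/n^m$-dense in $\T$. If one prefers to avoid approximate eigenvectors, an alternative route is to use the invariance of each $(\C^n)^{\otimes m}\otimes\cle$ to write $W_L=\bigoplus_{m\ge0}W_L^{(m)}$ as a direct sum of unitaries, identify $W_L^{(m)}$ for $m\ge1$ as a cyclic weighted shift on $n^m$ copies of $\cle$ with weights $I,\dots,I,L$, show by propagating the equation $(W_L^{(m)}-\lambda)f=g$ through the coordinates (or via $(W_L^{(m)})^{n^m}=I\otimes L$ and the spectral mapping theorem, supplemented by an invertibility check) that $\sigma(W_L^{(m)})=\{\lambda:\lambda^{n^m}\in\sigma(L)\}$, and then invoke $\sigma(W_L)=\overline{\bigcup_{m}\sigma(W_L^{(m)})}$ together with the density in $\T$ of the $n^m$-th roots of any fixed point of $\sigma(L)$; this again yields $\sigma(W)=\T$.
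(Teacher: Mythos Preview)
Your argument is correct. Your primary route---constructing explicit approximate eigenvectors $v_m$ on the length-$m$ block by summing the odometer orbit against powers of a nearby $n^m$-th root of unity---is genuinely different from the paper's proof, although the alternative you sketch in your final paragraph is essentially the paper's argument. The paper proceeds exactly via that alternative: it decomposes $\clf^2_n\otimes\cle=\bigoplus_{m\ge0}(\clh_m\otimes\cle)$ with $\clh_m=\mathrm{span}\{e_\mu:|\mu|=m\}$, observes $(W_L|_{\clh_m\otimes\cle})^{n^m}=I_{\clh_m}\otimes L$, and then appeals to $\sigma(W_L)=\overline{\bigcup_m\sigma(W_L|_{\clh_m\otimes\cle})}$ together with $\sigma(W_L|_{\clh_m\otimes\cle})=\{\lambda:\lambda^{n^m}\in\sigma(L)\}$ and the density of $n^m$-th roots of a fixed unimodular point. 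Your approximate-eigenvector approach has the advantage of being entirely self-contained: the identity $W_Lv_m-\omega_m v_m=\omega_m\bigl(f_0\otimes(L\eta_0-\eta_0)\bigr)$ is a one-line calculation once the cyclic action on $\clh_m\otimes\cle$ is established, and it sidesteps the need to justify the spectral identity for the restricted block (which the paper states without proof and you correctly flag as requiring either a propagation argument or a supplementary invertibility check beyond spectral mapping alone). The paper's approach, on the other hand, yields the slightly stronger intermediate statement that each block spectrum is exactly the preimage of $\sigma(L)$ under $\lambda\mapsto\lambda^{n^m}$. Your remark that the case $n=1$ degenerates to $W_L=I\otimes L$ (so $\sigma(W)=\sigma(L)$ need not be $\T$) is a worthwhile observation that the paper's proof glosses over.
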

\begin{proof}
Since $(W,S^{\mathcal{E}})$ is a unitary Fock representation of $O_n,$ by Theorem \ref{Prop: Odometer operator} and Theorem \ref{W_L uni}, there is a unique unitary map $L \in \clb(\cle,  \Omega \otimes \cle)$ such that
$
W = W_L.
$
Let us decompose
$$\mathcal{F}_{n}^{2}\otimes \mathcal{E}= \bigoplus_{m=0}^{\infty} (\mathcal{H}_{m}\otimes\mathcal{E}),$$
where $\mathcal{H}_{m}= {\rm span}\{e_{\mu}: \vert\mu\vert= m \}$. Note that $W_{L}|_{\mathcal{H}_{m}\otimes \mathcal{E}}$ is a unitary operator and
$$
(W_{L}|_{\mathcal{H}_{m}\otimes \mathcal{E}})^{n^m}=I_{\mathcal{H}_{m}}\otimes L.
$$
Now, we compute
\begin{align*}
          \sigma(W_{L}) &= \overline{\bigcup^{\infty}_{m=0}\sigma(W_{L}|_{\mathcal{H}_{m}\otimes \mathcal{E}})} \\
          &= \overline{\bigcup^{\infty}_{m=0}\{\lambda\in \mathbb{C}: \lambda^{n^m}\in \sigma(I_{\mathcal{H}_{m}}\otimes L)\}}\\
       &= \overline{\bigcup^{\infty}_{m=0}\{\lambda\in \mathbb{C}: \lambda^{n^m}\in \sigma(L)\}}\\
             &\supseteq \overline{\bigcup^{\infty}_{m=0}\{\lambda\in \mathbb{C}: \lambda^{n^m}= c , \text{ for some }
             \vert c\vert=1\}} ~~~\,\,\,(\text{ as } L \text{ is unitary })\\
             &=  \T.
      \end{align*}
 Since ${W}_{L}$ is unitary, $\sigma(W_{L})= \T.$
\end{proof}

In closing, we again reiterate Remark \ref{remark odometer maps} that the concept of odometer maps relies on symbols, which can vary all over the Fock space. This is as opposed to the noncommutative Toeplitz operators, which are by themselves a fascinating subject of research. This calls for an in-depth investigation of odometer maps acting on vector-valued Fock spaces. Moreover, with regard to the structure of $C^*$-algebras generated by Fock representations of $O_n$, we expect that the results of this paper; more precisely, the explicit description of the isometric Fock representations; will be helpful.

\vspace{0.3in}

\noindent\textbf{Acknowledgement:}
We are grateful to the reviewer for a meticulous reading and for all the corrections and suggestions that have significantly improved the results, presentation, and clarity of this paper. We are especially thankful to the reviewer for improving the results of Remark \ref{remark odometer maps}, Theorem \ref{Thm: Nica covariant characterization}, Theorem \ref{Thm: scalar Nica covariant characterization}, the examples in Section \ref{sec examples}, and Proposition \ref{prop: spect}. First and second named authors thankfull to Harish Chandra Research Institute, Prayagraj, for providing warm hospitality to carry out this initial work. The research of the second named author is supported in part by PM ECRG (ANRF/ECRG/2024/002458/PMS) by ANRF, Government of India. The research of the third named author is supported in part by TARE (TAR/2022/000063) by SERB, Department of Science \& Technology (DST), Government of India.

\end{document}